\begin{document}
\newtheorem{thm}{Theorem}[section]
\newtheorem{cor}{Corollary}[section]
\newtheorem{lem}{Lemma}[section]
\newtheorem{prop}{Proposition}[section]
\newtheorem{defn}{Definition}[section]
\newtheorem{rk}{Remark}[section]
\newtheorem{nota}{Notation}[section]
\newtheorem{Ex}{Exmple}[section]
\def\nm{\noalign{\medskip}}

\numberwithin{equation}{section}

\newcommand{\ds}{\displaystyle}
\newcommand{\pf}{\medskip \noindent {\sl Proof}. ~ }
\newcommand{\p}{\partial}
\renewcommand{\a}{\alpha}
\newcommand{\z}{\zeta}
\newcommand{\pd}[2]{\frac {\p #1}{\p #2}}
\newcommand{\norm}[1]{\left\| #1 \right \|}
\newcommand{\dbar}{\overline \p}
\newcommand{\eqnref}[1]{(\ref {#1})}
\newcommand{\na}{\nabla}
\newcommand{\Om}{\Omega}
\newcommand{\ep}{\epsilon}
\newcommand{\tmu}{\widetilde \epsilon}
\newcommand{\vep}{\varepsilon}
\newcommand{\tlambda}{\widetilde \lambda}
\newcommand{\tnu}{\widetilde \nu}
\newcommand{\vp}{\varphi}
\newcommand{\RR}{\mathbb{R}}
\newcommand{\CC}{\mathbb{C}}
\newcommand{\NN}{\mathbb{N}}
\renewcommand{\div}{\mbox{div}~}
\newcommand{\bu}{{\bf u}}
\newcommand{\la}{\langle}
\newcommand{\ra}{\rangle}
\newcommand{\Scal}{\mathcal{S}}
\newcommand{\Lcal}{\mathcal{L}}
\newcommand{\Kcal}{\mathcal{K}}
\newcommand{\Dcal}{\mathcal{D}}
\newcommand{\tScal}{\widetilde{\mathcal{S}}}
\newcommand{\tKcal}{\widetilde{\mathcal{K}}}
\newcommand{\Pcal}{\mathcal{P}}
\newcommand{\Qcal}{\mathcal{Q}}
\newcommand{\id}{\mbox{Id}}
\newcommand{\stint}{\int_{-T}^T{\int_0^1}}
%%%%%%%%%%

\newcommand{\be}{\begin{equation}}
\newcommand{\ee}{\end{equation}}

\newcommand{\rd}{{\mathbb R^d}}
\newcommand{\rr}{{\mathbb R}}
\newcommand{\alert}[1]{\fbox{#1}}
\newcommand{\eqd}{\sim}
\def\R{{\mathbb R}}
\def\N{{\mathbb N}}
\def\Q{{\mathbb Q}}
\def\C{{\mathbb C}}
\def\ZZ{{\mathbb Z}}
\def\l{{\langle}}
\def\r{\rangle}
\def\t{\tau}
\def\k{\kappa}
\def\a{\alpha}
\def\la{\lambda}
\def\De{\Delta}
\def\de{\delta}
\def\ga{\gamma}
\def\Ga{\Gamma}
\def\ep{\varepsilon}
\def\eps{\varepsilon}
\def\si{\sigma}
\def\Re {{\rm Re}\,}
\def\Im {{\rm Im}\,}
\def\E{{\mathbb E}}
\def\P{{\mathbb P}}
\def\Z{{\mathbb Z}}
\def\D{{\mathbb D}}
\def\p{\partial}
\newcommand{\ceil}[1]{\lceil{#1}\rceil}

\title{Numerical study of vanishing and spreading dynamics of chemotaxis systems with logistic source and a free boundary}

\author{Lei Yang \thanks{School of Mathematics, Jilin University, Changchun, 130012, P. R. China.} \,\, and
Lianzhang Bao\thanks{School of Mathematics, Jilin University, Changchun, 130012, P. R. China, and Department of Mathematics and Statistics,
Auburn University,  AL 36849, U. S. A. (lzbao@jlu.edu.cn).}}
%Wenxian Shen \thanks{Department of Mathematics and Statistics,
%Auburn University,
%AL 36849, U. S. A. (wenxish@auburn.edu), partially supported by the NSF grant DMS--1645673.}\,\, , and
%Lei Yang \thanks{School of Mathematics, Jilin University, Changchun, 130012, P. R. China.}}

\date{}

\maketitle

\begin{abstract}
The current paper is to investigate the numerical approximation of logistic type chemotaxis models in one space dimension with a free boundary.
Such a model with a free boundary describes the spreading of a new or invasive species subject to the influence of some chemical substances
in an environment with a free boundary representing the spreading front (see Bao and Shen \cite{BaoShen1}, \cite{BaoShen2}). The main challenges in the numerical studies lie in tracking the moving free boundary and the nonlinear terms from chemical. To overcome them, a front fixing framework coupled with finite difference method is introduced. The accuracy of the proposed method, the positivity of the solution, and the stability of the scheme are discussed. The numerical simulations agree well with theoretical results such as the vanishing spreading dichotomy, local persistence, and stability. These simulations also validate some conjectures in our future theoretical studies such as the dependence of the vanishing-spreading dichotomy on the initial solution $u_0$, initial habitat $h_0$, the moving speed $\nu$ and the chemotactic sensitivity coefficients $\chi_1,\chi_2$.
%\textbf{(W.S. may make some modification, in particular, mention the simulations are on the spreading and vanishing scenarios)}
 %In the third part of the series, we will  study the existence of  spreading speed and semi-wave solutions chemoattraction-repulsion systems with a free boundary.
\end{abstract}

\textbf{Key words.} Chemoattraction-repulsion system, nonlinear parabolic equations, free boundary problem, spreading-vanishing dichotomy, front fixing, finite difference, invasive population.

\medskip

\textbf{AMS subject classifications.}
35R35, 35J65, 35K20, 78M20, 92B05.

\section{Introduction}
The current paper is  to  study, in particular, numerically, the spreading and vanishing dynamiccs of
 the following attraction-repulsion chemotaxis system with a  free boundary and logistic source,
\begin{equation}\label{one-free-boundary-eq}
\begin{cases}
u_t = u_{xx} -\chi_1  (u  v_{1,x})_x + \chi_2 (u v_{2,x})_x + u(a(t,x) - b(t,x)u), \quad 0<x<h(t)
\\
 0 = \partial_{xx} v_1 - \lambda _1v_1 + \mu_1u,  \quad  0<x<h(t)
 \\
 0 = \partial_{xx}v_2 - \lambda_2 v_2 + \mu_2u,  \quad 0<x<h(t)
 \\
 h'(t) = -\nu u_x(t,h(t))
\\
u_x(t,0) = v_{1,x}(t,0) = v_{2,x}(t,0) = 0
\\
 u(t,h(t)) = v_{1,x}(t,h(t)) = v_{2,x}(t,h(t)) = 0
 \\
 h(0) = h_0,\quad u(x,0) = u_0(x),\quad  0\leq x\leq h_0,
\end{cases}
\end{equation}
%and to study the
%asymptotic dynamics of
%\begin{equation}
%\label{half-line-eq1}
%\begin{cases}
%u_t = u_{xx} -\chi_1  (u  v_{1,x})_x +\chi_2(u v_{2,x})_x+ u(a(t,x) - b(t,x)u),\quad x\in (0,\infty)
%\cr
% 0 = v_{1,xx} - \lambda_1v_1 + \mu_1u,  \quad x\in (0,\infty)\cr
% 0=v_{2,xx}-\lambda_2 v_2+\mu_2 u,  \quad x\in (0,\infty)\cr
%u_x(t,0)=v_{1,x}(t,0)=v_{2,x}(t,0)=0,
%\end{cases}
%\end{equation}
where $\nu>0$ in \eqref{one-free-boundary-eq} is a positive constant, $a, b ,\chi_i$, $\lambda_i$, and $\mu_i$ ($i=1,2$)  are nonnegative constants, and $a(t,x)$ and $b(t,x)$  satisfy the following assumption,

\medskip

\noindent {\bf (H0)} {\it $a(t,x)$ and $b(t,x)$ are bounded $C^1$ functions on $\RR\times [0,\infty)$,
and
$$a_{\inf}:=\inf_{t\in\RR,x\in [0,\infty)}a(t,x)>0,\quad b_{\inf}:=\inf_{t\in\RR,x\in[0,\infty)}b(t,x)>0.
$$
}
%\medskip
%\noindent {\bf (H0)} {\it $a(t,x)$ and $b(t,x)$ are bounded $C^1$ functions on $\RR\times [0,\infty)$,
%and
%$$a_{\inf}:=\inf_{t\in\RR,x\in [0,\infty)}a(t,x)>0,\quad b_{\inf}:=\inf_{t\in\RR,x\in[0,\infty)}b(t,x)>0.
%$$
%}
%\medskip
Biological backgrounds of \eqref{one-free-boundary-eq} are discussed in the paper (\cite{BaoShen1}). The free boundary condition in \eqref{one-free-boundary-eq} is also derived in \cite{BaoShen1} based on the consideration of ``population loss'' at the front which assumes that the expansion of the spreading front is evolved in a way that the average population density loss near the front is kept at a certain preferred level of the species, and for each given species in a given homogeneous environment, this preferred density level is a positive constant determined by their specific social and biological needs, and the environment.

One of the first mathematical models of chemotaxis were introduced by Keller and Segel (\cite{Keller1970Initiation}, \cite{Keller1971Model}) to describe the aggregation of certain type of bacteria in 1970. Since their publications, considerable progress has been made in the analysis of various particular case of chemotaxis (Keller-Segel) model on both bounded and unbounded fixed domain (see \cite{Bellomo2015Toward}, \cite{Diaz1995Symmtr}, \cite{Diaz1998Symmtr}, \cite{Galakhov2016on}, \cite{Horstmann2005bound}, \cite{Kanga2016blow}, \cite{Nagai1997application}, \cite{Sugiyama2006global1}, \cite{Sugiyama2006global2}, \cite{Wang2014on}, \cite{Winkler2010aggregation}, \cite{Winkler2011blow}, \cite{Winkler2013finite}, \cite{Winkler2014global}, \cite{Winkler2014how}, \cite{Yokota2015existence}, \cite{Zheng2015boundedness}, and the references therein).
Among the fundamental problems in studying chemotaxis model are the existence of nonnegative solutions which are globally defined in time or blow up at a finite time and the asymptotic behavior of time global solutions.

Du and Lin studied the population invasion represented by \emph{Fisher-KPP} free boundary problem in 2010 \cite{DuLi}. The breaking difference between the asymptotic behaviours of \emph{Fisher-KPP} with a free boundary and on the fixed or fixed unbounded domain is the vanishing-spreading dichotomy, which is well supported by some empirical evidences, for example, the introduction of several bird species from Europe to North America in the 1900s was successful only after many initial attempts (see \cite{lockwood2007invasion},\cite{Shigesada1997biological}).

Compared to the studying chemotaxis model on fixed bounded or fixed unbounded domain and the asymptotic behaviour of \emph{Fisher-KPP} equation with a free boundary, the
central problems in studying system \eqref{one-free-boundary-eq} are the existence of nonnegative solutions which are globally defined
in time, the vanishing-spreading dichotomy, local persistence, local stability, and so on.
%\textbf{(W.S. Here, describe some basic problems about \eqref{one-free-boundary-eq}, say, local and global existence, spreading and vanishing scenario, etc.)}
%
%
%\textbf{(W.S. Here, recall some of our works about the above problems, not mention the objective of the current paper yet)}

To state the main results of the current paper, we first recall some theoretical results proved in \cite{BaoShen2} which will all be validated in our numerical simulations.

Let
$$
C_{\rm unif}^b(\RR^+)=\{u\in C(\RR^+)\,|\, u(x)\,\, \text{is uniformly continuous and bounded on}\,\, \RR^+\}
$$
with norm $\|u\|_\infty=\sup_{x\in\RR^+}|u(x)|$, and
$$
C_{\rm unif}^b(\RR)=\{u\in C(\RR)\,|\, u(x)\,\, \text{is uniformly continuous and bounded on}\,\, \RR\}
$$
with norm $\|u\|_\infty=\sup_{x\in\RR}|u(x)|$. Define
\begin{align}\label{m-eq}
M= \min\Big\{ &\frac{1}{\lambda_2}\big( (\chi_2\mu_2\lambda_2-\chi_1\mu_1\lambda_1)_+ + \chi_1\mu_1(\lambda_1-\lambda_2)_+ \big),\nonumber\\
& \qquad \frac{1}{\lambda_1}\big( (\chi_2\mu_2\lambda_2-\chi_1\mu_1\lambda_1)_+ + \chi_2\mu_2(\lambda_1-\lambda_2)_+ \big) \Big\}
 \end{align}
 and
 \begin{align}\label{k-eq}
K=\min\Big\{&\frac{1}{\lambda_2}\Big(|\chi_1\mu_1\lambda_1-\chi_2\mu_2\lambda_2|+\chi_1\mu_1|\lambda_1-\lambda_2|\Big),\nonumber\\
&\quad  \frac{1}{\lambda_1}\Big(|\chi_1\mu_1\lambda_1-\chi_2\mu_2\lambda_2|+\chi_2\mu_2|\lambda_1-\lambda_2|\Big) \Big\}.
\end{align}
  Let {\bf (H1)}- {\bf (H3)}  be the following standing assumptions.

\medskip

\noindent {\bf (H1)}  $b_{\inf}>\chi_1\mu_1-\chi_2\mu_2+M$.

\medskip

\noindent {\bf (H2)} $b_{\inf}>\Big(1+\frac{a_{\sup}}{a_{\inf}}\Big)\chi_1\mu_1-\chi_2\mu_2+M$.

\medskip

\noindent {\bf (H3)}  $b_{\inf}>\chi_1\mu_1-\chi_2\mu_2+K$.

\medskip

Note  that
$$
M\le \chi_2\mu_2.
$$
Hence $b_{\inf}\ge \chi_1\mu_1$ implies {\bf (H1)}. In the case $\chi_2=0$, we can choose $\lambda_2=\lambda_1$,  and then
$M=0$ and $K=\chi_1\mu_1$. Hence {\bf (H1)} becomes
$b_{\inf}>\chi_1\mu_1$, {\bf (H2)} becomes $b_{\inf}>(1+\frac{a_{\sup}}{a_{\inf}})\chi_1\mu_1$, and {\bf (H3)} becomes $b_{\inf}>2\chi_1\mu_1$. In the case $\chi_1=0$, we can also choose $\lambda_1=\lambda_2$, and then $M=\chi_2\mu_2$ and $K=\chi_2\mu_2$. Hence {\bf (H1)}
(resp.{\bf (H2)}, {\bf (H3)}) becomes $b_{\inf}>0$.
Biologically, {\bf (H1), (H2)}, and {\bf (H3)} indicate that the chemo-attraction sensitivity is relatively small with respect to logistic damping.

\smallskip

When {\bf (H1)} holds, we put
\begin{equation}
\label{M0-eq}
M_0=\frac{a_{\sup}}{b_{\inf}+\chi_2\mu_2-\chi_1\mu_1-M}
\end{equation}
and
\begin{equation}
\label{m0-eq}
m_0= \frac{a_{\inf}\big(b_{\inf}-(1+\frac{a_{\sup}}{a_{\inf}})\chi_1\mu_1+\chi_2\mu_2-M\big)}{(b_{\inf}-\chi_1\mu_1+\chi_2\mu_2-M)
(b_{\sup}-\chi_1\mu_1+\chi_2\mu_2)}.
\end{equation}
Note that if {\bf (H2)} holds, then $m_0>0$.

Let
$$
H(a,b)={\rm cl}\{ (a(t+\cdot,\cdot),b(t+\cdot,\cdot))|t\in\RR\}
$$
with open compact topology, where the closure is taken under the open compact topology.
%For any $(\tilde a,\tilde b)\in H(a,b)$, consider
%\begin{equation}
%\label{half-line-eq2}
%\begin{cases}
%u_t = u_{xx} -\chi_1  (u  v_{1,x})_x +\chi_2(u v_{2,x})_x+ u(\tilde a(t,x) -\tilde  b(t,x)u),\quad x\in (0,\infty)
%\cr
% 0 = v_{1,xx} - \lambda_1v_1 + \mu_1u,  \quad x\in (0,\infty)\cr
% 0=v_{2,xx}-\lambda_2 v_2+\mu_2 u,  \quad x\in (0,\infty)\cr
%u_x(t,0)=v_{1,x}(t,0)=v_{2,x}(t,0)=0.
%\end{cases}
%\end{equation}
%Then Theorem \ref{half-line-thm} also holds for \eqref{half-line-eq2}.

The main results of the paper \cite{BaoShen2} are stated in the following.
The first result is on the global existence of nonnegative solutions of \eqref{one-free-boundary-eq}.

 \emph{Global existence \cite[Theorem 1.2]{BaoShen2}:
 \label{free-boundary-thm1}
  If {\bf (H1)} holds, then for any $t_0\in\RR$, and any $h_0>0$ and any function $u_0(x)$ on $[0,h_0]$ satisfying
  \begin{equation}
  \label{initial-cond-eq}
  u_0\in C^2[0,h_0],\quad u_0(x)\ge 0\,\, {\rm for}\,\, x\in [0,h_0],\quad {\rm and}\,\,  u_0'(0)=0,\,\,  u_0(h_0)=0,
  \end{equation}
   \eqref{one-free-boundary-eq} has a unique globally defined solution $(u(t,x;t_0,u_0,h_0)$, $v_1(t,x;t_0,u_0,h_0)$, $v_2(t,x;t_0,u_0,h_0)$, $h(t;t_0,u_0,h_0))$
with $u(t_0,x;t_0,u_0,h_0)=u_0(x)$ and $h(t_0;t_0,u_0,h_0)=h_0$. Moreover,
\begin{equation}\label{thm1-h-bound}
{ 0\leq h'(t) \leq 2\nu M_1 C_0},
\end{equation}
 \begin{equation}
 \label{thm1-eq1}
0\le u(t,x;t_0,u_0,h_0)\le  \max\{\|u_0\|_\infty, M_0\}\quad \forall \,\, t\in [t_0,\infty), \,\, x\in [0,h(t;t_0,u_0,h_0))
\end{equation}
and
\begin{equation}
\label{thm1-eq2}
\limsup_{t\to\infty} \sup_{x\in [0,h(t;t_0,u_0,h_0))}u(t_0+t,x;t_0,u_0,h_0)\le M_0,
\end{equation}
where $M_1$ is a big enough constant and $C_0 = \max\{\|u_0\|_{\infty}, M_0\}$.
%\end{thm}
}
\smallskip

Assume {\bf (H1)}. For any given $t_0\in\RR$,  and any given $h_0>0$ and  $u_0(\cdot)$ satisfying
\eqref{initial-cond-eq}, by the nonnegativity of $u(t,x;t_0,u_0,h_0)$, $h^{'}(t;t_0,u_0,h_0)\ge 0$ for all $t>t_0$. Hence
$\lim_{t\to\infty}h(t;t_0,u_0,h_0)$ exists. Put
$$
h_\infty(t_0,u_0,h_0)=\lim_{t\to\infty} h(t;t_0,u_0,h_0).
$$
We say {\it vanishing} occurs if $h_\infty(t_0,u_0,h_0)<\infty$ and
$$
\lim_{t\to\infty}\|u(t,\cdot;t_0,u_0,h_0)\|_{C([0,h(t;t_0,u_0,h_0)])}=0.
$$
We say {\it spreading} occurs if $h_\infty(t_0,u_0,h_0)=\infty$ and
for any $L>0$, $$\liminf_{t\to\infty} \inf_{0\le x \le L} u(t,x;u_0,h_0)>0.$$

For given $l>0$,  consider the following linear equation,
\begin{equation}\label{Eq-PLE1}
\begin{cases}
 v_t = v_{xx} + a(t,x) v, \quad 0<x<l\cr
 v_x(t,0) = v(t,l) =0.
\end{cases}
\end{equation}
Let $[\lambda_{\min}(a,l),\lambda_{\max}(a,l)]$ be the principal spectrum interval of \eqref{Eq-PLE1}
(see Definition 2.1 \cite{BaoShen2}). Let $l^*>0$ be such that
$\lambda_{\min}(a,l)>0$ for $ l>l^*$ and $\lambda_{\min}(a,l^*)=0$ (see Lemma 2.2, 2.3 \cite{BaoShen2} for the existence and uniqueness
of $l^*$).

The second result is about the spreading and vanishing dichotomy scenario in \eqref{one-free-boundary-eq}.

\emph{Spreading-vanishing dichotomy \cite[Theorem 1.3]{BaoShen2}:
%\label{free-boundary-thm2}
Assume that {\bf (H1)} holds.
 For any given $t_0\in\RR$, and $h_0>0$ and $u_0(\cdot)$ satisfying \eqref{initial-cond-eq}, we have that
%\textbf{(W.S. the condition for the global existence here)}
either
(i) vanishing occurs and $h_\infty(t_0,u_0,h_0)\le l^*$; or
%\smallskip
(ii) spreading occurs.}
%\end{thm}

\smallskip
For given $t_0\in\RR$,  and  $h_0>0$ and   $u_0(\cdot)$ satisfying
\eqref{initial-cond-eq}, if spreading occurs, it is interesting to know whether {\it local uniform persistence} occurs in the sense that
there is a positive constant $\tilde m_0$ independent of the initial data such that for any $L>0$,
$$\liminf_{t\to\infty}\inf_{0\le x\le L}u(t,x;t_0,u_0,h_0)\ge \tilde m_0,
$$
   and whether {\it local uniform convergence} occurs in the sense that  $\lim_{t\to\infty} u(t,x;t_0,u_0,h_0)$ exists locally uniformly. We have the following result
along this direction.

\emph{Persistence and convergence \cite[Theorem 1.4]{BaoShen2}:}
\label{free-boundary-thm3}
Assume that {\bf (H1)} holds and that $h_0>0$ and $u_0(\cdot)$ satisfy \eqref{initial-cond-eq}.

\smallskip

\noindent \emph{(i) (Local uniform persistence)} For any given $t_0\in\RR$, if  $h_\infty(t_0,u_0,h_0)=\infty$ and {\bf (H2)} holds, then
for any $L>0$, $$\liminf_{t\to\infty} \inf_{0\le x \le L} u(t,x;t_0,u_0,h_0)>m_0,$$
where $m_0$ is as in \eqref{m0-eq}.

\smallskip
\noindent \emph{(ii) (Local uniform convergence)}  Assume that {\bf (H3)} holds,
and  that for any $(\tilde a,\tilde b)\in H(a,b)$, there
has a  unique strictly positive entire solution $(u^*(t,x;\tilde a,\tilde b),v_1^*(t,x;\tilde a,\tilde b)$, $v_2^*(t,x;\tilde a,\tilde b))$.
Then for  any given $t_0\in\RR$, if
  $h_\infty(t_0,u_0,h_0)=\infty$, there are $\chi_1^* > 0, \chi_2^* > 0$ such that to any {$0\leq \chi_1\leq \chi_1^*$, $0\leq \chi_2\leq \chi_2^*$,}
for any $L>0$,
\begin{equation}\label{FBP-stability}
\lim_{t\to\infty} \sup_{0\le x\le L} |u(t,x;t_0,u_0,h_0)-u^*(t,x;a,b)|=0.
\end{equation}
\smallskip

\noindent \emph{(iii) (Local uniform convergence)}
Assume that {\bf (H3)} holds, and that $a(t,x)\equiv a(t)$ and $b(t,x)\equiv b(t)$. Then for any given  $t_0\in\RR$, if $h_\infty(t_0,u_0,h_0)=\infty$, then
for any $L>0$,
$$
\lim_{t\to\infty} \sup_{0\le x\le L} |u(t,x;u_0,h_0)-u^*(t)|=0,
$$
where $u^*(t)$ is the unique strictly  positive entire solution of the ODE
\begin{equation}
\label{fisher-kpp-ode}
u^{'}=u(a(t)-b(t)u)
\end{equation}
(see \cite[Lemma 2.5]{Issa-Shen-2017} for the existence and uniqueness of strictly  positive entire solutions of
\eqref{fisher-kpp-ode}).

\begin{rk}
Biologically, the invasion or spreading of the population is depending on the initial solution, initial habitat, the moving speed $\nu$ (\cite{DuLi}). When the spreading happens, the local persistence and convergence can be guaranteed in \emph{Fisher-KPP} equation with a free boundary, furthermore, there is a asymptotic spreading speed such that $\lim_{t\to\infty}\frac{h(t)}{t} = c^* > 0$ (see \cite{DuLi}, \cite{Li2016duffusive}).
\end{rk}
Compared to the vanishing-spreading dichotomy in \emph{Fisher-KPP} equation with a free boundary, the chemotaxis system \eqref{one-free-boundary-eq} do not have comparison principle which leads the following interesting  open problems but has positive answers in \emph{Fisher-KPP} case.
\begin{enumerate}
\item For given $u_0(\cdot)$ and $0<h_0<l^*$, whether there is
$\nu^*>0$ such that for $0<\nu\le \nu^*$, vanishing occurs, and for $\nu>\nu^*$, spreading occurs.
\item For given $\nu>0$, $\phi(\cdot)$, and
$0<h_0<l^*$, whether there is $\sigma^*>0$ such that for $u_0=\sigma \phi$ with $\sigma\le \sigma^*$, vanishing occurs, and
for $u_0=\sigma\phi$ with $\sigma>\sigma^*$, spreading occurs.
\item Whether there is a spreading speed $c^*>0$ such that
$\lim_{t\to\infty}\frac{h(t)}{t}=c^*$ as long as the spreading occurs.
\end{enumerate}

\begin{rk}
Vanishing-spreading result \cite[Theorem 1.2]{BaoShen2} indicate there is a separating value $l^*$, which is independent of the chemotactic sensitivity coefficients $\chi_1, \chi_2$, such that in the vanishing scenario the limiting moving boundary $h_\infty <l^*$ and when the initial habitat $h_0 > l^*$ the spreading guaranteed. The dependence of the dynamics of the system on the chemotactic sensitivity coefficients is another important and interesting questions \cite{SaShXu}, \cite{Winkler2014how}. We also have the following question in this direction.
\begin{description}
\item[4.] If the asymptotic spreading speed exist, whether the limit $\lim_{t\to\infty}\frac{h(t)}{t}=c^*$ depends on the chemotactic sensitivity coefficient $\chi_1$ and $\chi_2$.
\end{description}
\end{rk}

The objective of the current paper is to study the numerical effect of the parameters $\sigma, \nu, \chi_1, \chi_2$ on the vanishing and spreading dynamics in the system \eqref{one-free-boundary-eq} which will give us directions in the theoretical studies. For simplicity, we only consider the constant logistic coefficients in the system, where $a(t,x) = a, b(t,x) =b$. In general, it is always difficult to handle the attraction term in chemotaxis system which may lead to convection dominant in the system \cite{Chiu2007optimal}, \cite{Li2017local}, \cite{Liu2018positivity}, \cite{Saito2005Notes}. However, in the system \eqref{one-free-boundary-eq}, we have extra numerical challenges in efficiently and accurately handling the moving boundaries \cite{Piqueras2017afront}. These two challenges require us to construct a new numerical algorithm in the numerical study.

Thanks to the maximum principle in the elliptic equation and death damping coefficient $b$ in the parabolic equation, the chemoattraction term can be controlled by the magnitude of the population density which has a global bound by the suppressing of the death rate.
%For the studying of the moving boundary, there are three popular numerical methods: level set methods \cite{Liu2018numerical}, \cite{Liu2019numerical}, \cite{Osher1988front}, \cite{Osher2001level}, front tracking methods \cite{Liu2018numerical}, \cite{Liu2019numerical}, and front-fixing methods \cite{Landau1950heat}, \cite{Liu2018numerical}, \cite{Liu2019numerical}, \cite{Piqueras2017afront}. One distinct feature of front tracking is using a pure Lagrangian approaches to explicitly track locations of interfaces, but it is hard to handle topological bifurcations in high dimensions, while the level set methods can efficiently overcome such difficulties.
The front-fixing method has been successfully applied to solve one dimensional free boundary problem \cite{Landau1950heat}, \cite{Liu2018numerical}, \cite{Liu2019numerical}, \cite{Piqueras2017afront} which changes the moving boundary to fixed domain, and is the main concern in our numerical studies of \eqref{one-free-boundary-eq}. Combined with the front-fixing method, finite difference in parabolic and finite volume method in the elliptic equations in the system \eqref{one-free-boundary-eq}, we construct a new algorithm in the numerical study, as a by-product we also obtain the consistency, monotonicity of the moving boundary, positivity of the solution and stability results.

Our numerical experiments validate the vanishing and spreading dichotomy in the numerical scheme of system  \eqref{one-free-boundary-eq} which is similar to \emph{Fisher-KPP} equation with a free boundary and give evidences to our conjectures that:
\begin{enumerate}
\item For given $u_0(\cdot)$ and $0<h_0<l^*$, there is
$\nu^*>0$ such that for $0<\nu\le \nu^*$, vanishing occurs, and for $\nu>\nu^*$, spreading occurs. Which means in order to spread to the half space $\mathbb{R}^+$, the moving speed $\nu$ should be large enough and otherwise the population will be extinct.
\item For given $\nu>0$, $\phi(\cdot)$, and
$0<h_0<l^*$, there is $\sigma^*>0$ such that for $u_0=\sigma \phi$ with $\sigma\le \sigma^*$, vanishing occurs, and
for $u_0=\sigma\phi$ with $\sigma>\sigma^*$, spreading occurs. Biologically, large initial population density helps the establishment and spreading of invasion which is an indirect evidence to the early birds introduction problem in 1900s \cite{lockwood2007invasion}, \cite{Shigesada1997biological}.
\item There is a spreading speed $c^*>0$ such that
$\lim_{t\to\infty}\frac{h(t)}{t}=c^*$ as long as the spreading occurs, which is independent of chemotactic sensitivity coefficient $\chi_1$ and $\chi_2$. Chemical $v_1$ and $v_2$ are produced by the species and the density is close to zero near the spreading front. In such case the decisive effect of the spreading speed should not depend on the chemotactic sensitivity coefficients $\chi_1$ and $\chi_2$.
\end{enumerate}

\smallskip

The rest of this paper is organized in the following way. In section 2, we first use \emph{Landau} transformation to transfer the moving boundary to a fixed domain, then we use the finite difference, finite volume, and iteration method to approximate the continuous chemotaxis system. We also prove the monotonicity of the moving boundary, the positivity and stability of the discrete solutions. In section 3, we study the numerical spreading-vanishing dichotomy in  \eqref{one-free-boundary-eq} which validates our theoretical results (Vanishing-spreading dichotomy, local persistence and convergence). Our simulations also indicate the dependence or independence of the vanishing-spreading dichotomy on parameters $\nu, u_0, h_0, \chi_1, \chi_2$ and so on. In section 4, some future works are briefly discussed.

\section{Numerical approximation of the free boundary problem}
In this section, we study the numerical approximation of system \eqref{one-free-boundary-eq} with constant logistic coefficients $a(t,x) =a, b(t,x)=b$. First through the well-known \emph{Landau} transformation (see \cite{Landau1950heat}), we convert \eqref{one-free-boundary-eq} into a fixed spatial domain problem. In such a way the length of the moving boundary is included as another variable to be solved apart from the population density. Then we solve the converted new problem on the basis of finite difference and finite volume method. There is a circulation that each time when time variable increase we solve the elliptic equations first and using the forward differential method to find the solution of the parabolic equation for the new time.

From the elliptic equations in \eqref{one-free-boundary-eq}, we know that
\begin{eqnarray}
%\begin{split}
\partial_{xx}v_{1}&=&\lambda_{1}v_{1}-\mu_{1}u,\label{one-elliptic-eq1}
\\
\partial_{xx}v_{2}&=&\lambda_{2}v_{2}-\mu_{2}u. \label{one-elliptic-eq2}
%\end{split}
\end{eqnarray}
Combining \eqref{one-elliptic-eq1}, \eqref{one-elliptic-eq2} and the first equation in \eqref{one-free-boundary-eq}, we have
\begin{equation}\label{one-free-boundary-eq1}
u_{t}=u_{xx}+(-\chi_{1}v_{1x}+\chi_{2}v_{2x})u_{x}+(-\chi_{1}\lambda_{1}v_{1}+\chi_{2}\lambda_{2}v_{2}+a)u+(\chi_{1}\mu_{1}-\chi_{2}\mu_{2}-b)u^{2}.
\end{equation}
Then we introduce the \emph{Landau} transformation,
\begin{equation}
z(t,x)=\frac{x}{h(t)},\quad w(t,z)=u(t,x),\quad V_1(t,z)=v_1(t,x),\quad V_2(t,z)=v_2(t,x).
\end{equation}
Under this substitution the elliptic equations \eqref{one-elliptic-eq1}, \eqref{one-elliptic-eq2} take the form:
\begin{eqnarray}
%\begin{split}
\frac{\partial^{2}V_{1}}{\partial z^{2}}\cdot\frac{1}{h^{2}(t)}-\lambda_{1}V_{1}+\mu_{1}w=0,\qquad 0<z<1,\label{elliptic-eq1}
\\
\frac{\partial^{2}V_{2}}{\partial z^{2}}\cdot\frac{1}{h^{2}(t)}-\lambda_{2}V_{2}+\mu_{2}w=0,\qquad 0<z<1.\label{elliptic-eq2}
%\end{split}
\end{eqnarray}
The elliptic boundary conditions are
\begin{eqnarray}
%\begin{split}
V_{1,z}(t,0)=V_{2,z}(t,0)=0, \label{elliptic-condition1}
\\
V_{1,z}(t,1)=V_{2,z}(t,1)=0. \label{elliptic-condition2}
%\end{split}
\end{eqnarray}
Equation \eqref{one-free-boundary-eq1} takes the form:
\begin{equation}\label{one-free-boundary-eq2}
\begin{split}
\frac{\partial w}{\partial t}+\frac{\partial w}{\partial z}(-\frac{h^{'}(t)}{h(t)}z)=&\frac{\partial^{2}w}{\partial z^{2}}\frac{1}{h^{2}(t)}+(-\chi_{1}V_{1z}+\chi_{2}V_{2z})\frac{\partial w}{\partial z}\frac{1}{h^2(t)}+(-\chi_{1}\lambda_{1}V_{1}\\& +\chi_{2}\lambda_{2}V_{2}+a)w+(\chi_{1}\mu_{1}-\chi_{2}\mu_{2}-b)w^{2},\qquad 0<z<1.
\end{split}
\end{equation}
Let $G(t)$ denote $h^{2}(t)$ and multiply it on both sides of the above equation.
\begin{equation}\label{one-free-boundary-eq3}
\begin{split}
G(t)\frac{\partial w}{\partial z}+\frac{\partial w}{\partial z}(-G^{'}(t)\cdot\frac{z}{2})=&\frac{\partial^{2}w}{\partial z^{2}}+(-\chi_{1}V_{1z}+\chi_{2}V_{2z})\frac{\partial w}{\partial z}+(-\chi_{1}\lambda_{1}V_{1} \\&+\chi_{2}\lambda_{2}V_{2}+a)w\cdot G(t)\\&+(\chi_{1}\mu_{1}-\chi_{2}\mu_{2}-b)w^{2}\cdot G(t),\qquad 0<z<1.
\end{split}
\end{equation}
Boundary conditions and \emph{Stefan} condition take the form
\begin{equation}\label{boundary-cond1}
\frac{\partial w}{\partial z}(t,0)=0,\quad w(t,1)=0,\quad t>0
\end{equation}
and
\begin{equation}\label{boundary-cond2}
G^{'}(t)=-2\nu\frac{\partial w}{\partial z}(t,1),\quad t>0.
\end{equation}
The initial conditions in \eqref{one-free-boundary-eq} become:
\begin{equation}\label{initial-cond1}
G(0)=h_{0}^{2},\quad w(0,z)=w_{0}(z)=U_{0}(z\cdot h_0),\quad 0\leq z\leq1,
\end{equation}
and the initial function $u_{0}(x)$ is changed into $w_{0}(z)$ which maintains:
\begin{equation}\label{initial-cond2}
w^{'}_{0}(0)=w_{0}(1)=0,\qquad w_{0}(z)>0,\quad 0\leq z<1.
\end{equation}
Under the transformation, our aim is to solve the nonlinear parabolic partial differential system \eqref{elliptic-eq1}, \eqref{elliptic-eq2}, \eqref{one-free-boundary-eq3} in the fixed domain $(0,\infty)\times(0,1)$ for the variables $(t,z)$.

The following is the process according to the theory of the finite difference method. First we consider the  time and space discretization $\tau=\triangle t$, $h=\triangle z=1/M$, which means the interval (0,1) is divided into $M$ equal cells
\begin{displaymath}
0=z_0<z_1<\cdots<z_M=1,
\end{displaymath}
and the mesh points $(t^{n},z_{j})$, with $t^{n}=n\tau,n\geq0,z_{j}=jh,0\leq j\leq M$. For abbreviation, the approximate value of $w(t^n,z_j)$ can be denoted by $w^{n}_{j}$, the approximate values of $V_{1}(t^n,z_j)$ and $V_{2}(t^n,z_j)$ can be denoted by $V_{1,j}^{n}$ and $V_{2,j}^{n}$. Besides we write $g^n$ for the value of $G(t^n)$. Let us consider the central approximation of the spatial derivatives,
\begin{equation}\label{central-app1}
\frac{V_{1,j+1}^{n}-V_{1,j-1}^{n}}{2h}\approx\frac{\partial V_{1}}{\partial z}(t^n,z_j),\quad
\frac{V_{2,j+1}^{n}-V_{2,j-1}^{n}}{2h}\approx\frac{\partial V_{2}}{\partial z}(t^n,z_j),
\end{equation}
\begin{equation}\label{central-app2}
\frac{w_{j+1}^{n}-w_{j-1}^{n}}{2h}\approx\frac{\partial w}{\partial z}(t^n,z_j),\quad
\frac{w_{j-1}^{n}-2w_{j}^{n}+w_{j+1}^{n}}{h^2}\approx\frac{\partial^2 w}{\partial z^2}(t^n,z_j).
\end{equation}
By using the forward approximation of the time derivative, we get
\begin{equation}\label{forward-app}
\frac{w_{j}^{n+1}-w_{j}^{n}}{\tau}\approx\frac{\partial w}{\partial t}(t^n,z_j),\quad
\frac{g^{n+1}-g^{n}}{\tau}\approx G^{'}(t^n).
\end{equation}
Let us apply the approximation \eqref{central-app1} on the elliptic equations \eqref{elliptic-eq1}, \eqref{elliptic-eq2}, then we get
\begin{eqnarray}
%\begin{split}
\frac{1}{G(t)}\frac{1}{h^2}\cdot V_{1,j-1}^n+(\frac{1}{G(t)}\cdot\frac{-2}{h^2}-\lambda_1)V_{1,j}^n+\frac{1}{G(t)}\frac{1}{h^2}\cdot V_{1,j+1}^n=-\mu_{1}w_{j}^n, {\quad 1\leq j\leq M-1},\label{elliptic-eq1-appr}
\\
\frac{1}{G(t)}\frac{1}{h^2}\cdot V_{2,j-1}^n+(\frac{1}{G(t)}\cdot\frac{-2}{h^2}-\lambda_2)V_{2,j}^n+\frac{1}{G(t)}\frac{1}{h^2}\cdot V_{2,j+1}^n=-\mu_{2}w_{j}^n, {\quad 1\leq j\leq M-1}.\label{elliptic-eq2-appr}
%\end{split}
\end{eqnarray}
We mainly focus on solving the values of $V_1$, the relevant results about $V_2$ can be obtained in a similar way. For \eqref{central-app1}, $1\leq j\leq M-1$, we can get $M-1$ equations for $V_1$ and there are two other equations we need for the boundary. In order to achieve a higher $O(h^2)$ accuracy (see \cite{Li2000generalized}), we use the idea of finite volume method to handle the boundary conditions.

We get the equation for the left boundary:
\begin{equation}
(\frac{h}{2} d_0+\frac{a_1}{h})V_{1,0}-\frac{a_1}{h} V_{1,1}=\frac{h}{2}\phi_0,
\end{equation}
where
\begin{displaymath}
a_1=(\frac{1}{h}\int_{z_0}^{z_1}-G(t^n)dz)^{-1},\quad d_0=\frac{2}{h}\int_{z_0}^{z_{1/2}}-\lambda_1dz,\quad\phi_0=\frac{2}{h}\int_{z_0}^{z_{1/2}}-\mu_1 w^n dz,
\end{displaymath}
and $z_{1/2}=\frac{z_0+z_1}{2}$.

The equation for the right boundary is similar:
\begin{equation}
-\frac{a_n}{h}V_{1,M-1}+(\frac{a_n}{h}+\frac{h}{2}d_n)V_{1,M}=\frac{h}{2}\phi_n,
\end{equation}
where
\begin{displaymath}
a_n=(\frac{1}{h}\int_{z_{M-1}}^{z_M}-G(t^n)dz)^{-1},\quad d_n=\frac{2}{h}\int_{z_{M-1/2}}^{z_{M}}-\lambda_1dz,\quad\phi_n=\frac{2}{h}\int_{z_{M-1/2}}^{z_{M}}-\mu_1 w^n dz,
\end{displaymath}
and $z_{M-1/2}=\frac{z_{M-1}+z_M}{2}$.

By now we have $M+1$ equations which is enough to form a system of linear algebraic equations for $V_1$. It is tridiagonal and there exists the unique solutions $V_{1,0}^n,V_{1,1}^n,\cdots,V_{1,M}^n$. Similarly, we can acquire the system of linear algebraic equations about $V_2$ and its corresponding solutions $V_{2,0}^n,V_{2,1}^n,\cdots,V_{2,M}^n$.

With the information of $V_1$ and $V_2$, we can concentrate on solving the parabolic equation \eqref{one-free-boundary-eq3}. From \eqref{central-app1}, \eqref{central-app2}, and \eqref{forward-app}, \eqref{one-free-boundary-eq3} is approximated by
\begin{eqnarray}\label{parabolic-app}
%\begin{split}
&&g^n\frac{w_j^{n+1}-w_j^n}{\tau}-\frac{z_j}{2}\frac{w_{j+1}^n-w_{j-1}^n}{2h}\frac{g^{n+1}-g^n}{\tau}\nonumber
\\
&&=\frac{w_{j-1}^n-2w_{j}^n+w_{j+1}^n}{h^2}+(-\chi_1\frac{V_{1,j+1}^n-V_{1,j-1}^n}{2h}
+\chi_2\frac{V_{2,j+1}^n-V_{2,j-1}^n}{2h})\frac{w_{j+1}^n-w_{j-1}^n}{2h}\nonumber
\\
&&\quad +(-\chi_1\lambda_1V_{1,j}^n+\chi_2\lambda_2V_{2,j}^n+a)w_{j}^ng^n
+(\chi_1\mu1-\chi_2\mu_2-b){(w_j^n)}^2g^n, \label{parabolic-app}
%\end{split}
\end{eqnarray}
for $n\geq0,\;0\leq j\leq M-1$. Because of the initial conditions \eqref{initial-cond2}, we can assume a fictitious value $w_{-1}^n$ at the point $(t^n,-h)$, and then
\begin{equation}\label{fictitious-value}
\frac{w_1^n-w_{-1}^n}{2h}=0,\quad w_M^n=0,\quad n\geq 0.
\end{equation}
Considering the \emph{Stefan} condition \eqref{boundary-cond2}, according to \eqref{forward-app} and three points backward spatial approximation of $\frac{\partial w}{\partial z}(t,1)$, we obtain
\begin{equation}\label{boundary-app1}
\frac{g^{n+1}-g^n}{\tau}=-\frac{\nu}{h}(3w_M^n-4w_{M-1}^n+w_{M-2}^n), \quad n\geq 0.
\end{equation}
Because of \eqref{initial-cond1}, it can also be written as:
\begin{equation}\label{boundary-app2}
g^{n+1}=g^n+\frac{\tau\nu}{h}(4w_{M-1}^n-w_{M-2}^n),\quad n\geq0.
\end{equation}
Let us replace $g^{n+1}$ with \eqref{boundary-app2} in \eqref{parabolic-app}, we get the explicit scheme:
\begin{eqnarray}
%\begin{split}
w_j^{n+1}&=&(-\frac{z_j}{4h}\frac{\tau\nu(4w_{M-1}^n-w_{M-2}^n)}{hg^n}+\frac{\tau}{g^nh^2}-\frac{S_1\tau}{4h^2g^n})w_{j-1}^n\nonumber
\\
&&+(1-\frac{2\tau}{g^nh^2}+S_2\tau)w_j^n+S_3\tau({w_j^n})^2\nonumber
\\
&&+(-\frac{z_j}{4h}\frac{\tau\nu(4w_{M-1}^n-w_{M-2}^n)}{hg^n}+\frac{\tau}{g^nh^2}+\frac{S_1\tau}{4h^2g^n})w_{j+1}^n, \label{parabolic-app2}
%\end{split}
\end{eqnarray}
for $ n\geq 0,\;0\leq j\leq M-1,$ where
\begin{eqnarray}
%\begin{split}
S_1 &=&-\chi_1(V_{1,j+1}^{n}-V_{1,j-1}^{n})+\chi_2(V_{2,j+1}^{n}-V_{2,j-1}^{n}),\label{notation-s1}
\\
S_2 &=& -\chi_1\lambda_1V_{1,j}^n+\chi_2\lambda_2V_{2,j}^n+a,\label{notation-s2}
\\
S_3 &=& \chi_1\mu_1-\chi_2\mu_2-b.\label{notation-s3}
%\end{split}
\end{eqnarray}

%\noindent\textbf{Error estimates for smooth functions}

The solution of \eqref{one-free-boundary-eq3} is classical (see \cite[Lemma 3.1]{BaoShen2}), we can obtain a optimal error estimates for the numerical approximation.
Consider \eqref{one-free-boundary-eq3}, \eqref{boundary-cond1} and \eqref{boundary-cond2}, we denote that

\begin{eqnarray}
%\begin{split}
L_1(w, V_1, V_2, G)&=&\frac{\partial w}{\partial t}-\frac{z}{2}\frac{G^{'}(t)}{G(t)}\frac{\partial w}{\partial z}-\frac{1}{G(t)}\frac{\partial^2w}{\partial z^2}-\frac{1}{G(t)}(-\chi_1\frac{\partial V_1}{\partial z}+\chi_2\frac{\partial V_2}{\partial z})\frac{\partial w}{\partial z}\nonumber
\\
&&-(-\chi_1\lambda_1V_1+\chi_2\lambda_2V_2 + a)w-(\chi_1\mu_1-\chi_2\mu_2-b)w^2=0,\label{one-free-boundary-eq-app1}
%\end{split}
\\
L_2(w, V_1, V_2, G) &=& \frac{\partial w}{\partial z}(t,0)=0,\label{boundary-cond1-app}
\\
L_3(w, V_1, V_2, G) &=& G^{'}(t)+2\nu\frac{\partial w}{\partial z}(t,1)=0.\label{boundary-cond2-app}
\end{eqnarray}

From \eqref{parabolic-app}, we let
\begin{eqnarray}
%\begin{split}
L_{h1}(w_j^n,V_{1,j}^n,V_{2,j}^n, g^n)&=&\frac{w_j^{n+1}-w_j^n}{\tau}-\frac{z_j}{2}\frac{w_{j+1}^n-w_{j-1}^n}{2h}\frac{g^{n+1}-g^n}{g^n\tau}-\frac{1}{g^n}\frac{w_{j-1}^n-2w_{j}^n+w_{j+1}^n}{h^2}\nonumber
\\
&&-\frac{1}{g^n}(-\chi_1\frac{V_{1,j+1}^n-V_{1,j-1}^n}{2h}+\chi_2\frac{V_{2,j+1}^n-V_{2,j-1}^n}{2h}
)\frac{w_{j+1}^n-w_{j-1}^n}{2h}\nonumber
\\
&&-(-\chi_1\lambda_1V_{1,j}^n+\chi_2\lambda_2V_{2,j}^n+a)w_{j}^n\nonumber
\\
&& - (\chi_1\mu_1-\chi_2\mu_2-b)({w_j^n})^2=0\label{one-free-boundary-eq-app2}
%\end{split}
\end{eqnarray}
for $n\geq0,\;0\leq j\leq M-1$. Also from \eqref{fictitious-value} and \eqref{boundary-app1}, let
\begin{eqnarray}
L_{h2}(w_j^n,V_{1,j}^n,V_{2,j}^n, g^n) &=& \frac{w_1^n - w_{-1}^n}{2h} = 0, \quad n\geq 0,\label{boundary-cond1-app-2}
\\
L_{h3}(w_j^n,V_{1,j}^n,V_{2,j}^n, g^n) &=& \frac{g^{n+1} - g^n}{\tau} - \frac{\nu}{h}(4w_{M-1}^n - w_{M-2}^n)=0, \quad n\geq 0.\label{boundary-cond2-app-2}
\end{eqnarray}
Then we have the following error estimates for the parabolic equation.
\begin{prop}(Error estimates)\label{Parabolic-error-estimate}
Under conditions of global existence (\cite[Theorem 1.2]{BaoShen2}), let $w$ be the solution of Equation \eqref{one-free-boundary-eq-app1}, \eqref{boundary-cond1-app}, \eqref{boundary-cond2-app} and $w_h$ be the numerical solution of \eqref{one-free-boundary-eq-app2}, \eqref{boundary-cond1-app-2}, \eqref{boundary-cond2-app-2}. Then we have the following error estimates:
\begin{eqnarray}
R_1(w_j^n,g^n)&=&[L_1(w,G)]_j^n-L_{h1}(w_j^n,g^n)=O(\tau+h^2),
\\
R_2(w_j^n,g^n)&=&[L_2(w,G)]_j^n-L_{h2}(w_j^n,g^n)=O(h^2),
\\
R_3(w_j^n,g^n)&=&[L_3(w,G)]_j^n-L_{h3}(w_j^n,g^n)=O(\tau+h^2).
\end{eqnarray}
\end{prop}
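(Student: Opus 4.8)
The plan is to establish each of the three residuals $R_1, R_2, R_3$ separately by Taylor expansion around the grid point $(t^n, z_j)$, assuming the exact solution $(w, V_1, V_2, G)$ is smooth enough. The smoothness is precisely what the hypotheses buy us: since the solution of \eqref{one-free-boundary-eq3} is classical (via \cite[Lemma 3.1]{BaoShen2}) and the global existence theorem \cite[Theorem 1.2]{BaoShen2} gives uniform bounds on $u$ and $h'(t)$, we may assume $w$ has bounded spatial derivatives up to order four and bounded time derivatives up to order two on the relevant region, and that $G(t) = h^2(t)$ is bounded below away from zero (so that the $1/G(t)$ and $1/g^n$ factors are harmless). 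I would state these regularity and boundedness facts explicitly at the outset as consequences of the cited results, since every error bound rests on them.

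\textbf{Residual $R_2$.} This is the easiest. The continuous operator $L_2$ imposes $\partial_z w(t,0) = 0$, and the discrete $L_{h2}$ uses the symmetric difference $(w_1^n - w_{-1}^n)/(2h)$ with the fictitious node. A standard central-difference Taylor expansion gives
\begin{equation}
\frac{w_1^n - w_{-1}^n}{2h} = \frac{\partial w}{\partial z}(t^n, 0) + \frac{h^2}{6}\frac{\partial^3 w}{\partial z^3}(t^n, 0) + O(h^4),
\end{equation}
so $R_2 = O(h^2)$ follows immediately from the boundedness of $\partial_z^3 w$.

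\textbf{Residuals $R_1$ and $R_3$.} These are the genuine work, and $R_1$ is the main obstacle because \eqref{one-free-boundary-eq-app2} assembles several discretizations whose individual truncation errors must be collected. I would expand term by term: the forward time difference $(w_j^{n+1}-w_j^n)/\tau$ contributes $\partial_t w + O(\tau)$; the second central difference $(w_{j-1}^n - 2w_j^n + w_{j+1}^n)/h^2$ contributes $\partial_z^2 w + O(h^2)$; and each first central difference contributes the derivative plus $O(h^2)$. The one subtle point is the convection-type product term $-\frac{z_j}{2}\frac{g^{n+1}-g^n}{g^n\tau}\cdot\frac{w_{j+1}^n-w_{j-1}^n}{2h}$: here $(g^{n+1}-g^n)/\tau = G'(t^n) + O(\tau)$ and the spatial difference is $\partial_z w + O(h^2)$, so the product is $-\frac{z_j}{2}\frac{G'(t^n)}{G(t^n)}\partial_z w + O(\tau + h^2)$, where one must use that $g^n$ is bounded below to keep the division under control. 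Summing all contributions against the definition of $L_1$ yields $R_1 = O(\tau + h^2)$. For $R_3$, the continuous $L_3$ reads $G'(t) + 2\nu\,\partial_z w(t,1) = 0$; the discrete version replaces $G'$ by the forward difference (error $O(\tau)$) and $\partial_z w(t^n,1)$ by the three-point backward formula $(3w_M^n - 4w_{M-1}^n + w_{M-2}^n)/(2h)$, which is the second-order one-sided stencil with error $O(h^2)$. Matching signs and the factor of $2\nu$ against \eqref{boundary-app1} gives $R_3 = O(\tau + h^2)$.

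The hardest part, as noted, is the bookkeeping in $R_1$: one must verify that the nonlinear and chemotactic terms (the $S_1, S_2, S_3$ groupings involving $V_1, V_2$) are reproduced exactly by their discrete counterparts up to $O(h^2)$, which requires the central-difference expansions of $\partial_z V_1, \partial_z V_2$ together with the boundedness of $V_1, V_2$ and their derivatives — the latter following from the elliptic regularity of \eqref{elliptic-eq1}–\eqref{elliptic-eq2} and the uniform bound on $w$. I would therefore first record a short lemma asserting that $V_1, V_2$ are $C^2$ in $z$ with bounds depending only on $\|w\|_\infty$, then feed those bounds into the term-by-term expansion so that all the mixed products collapse cleanly to the $O(\tau + h^2)$ estimate.
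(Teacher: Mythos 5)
Your proposal is correct and follows essentially the same route as the paper's proof: a term-by-term Taylor expansion at $(t^n,z_j)$, with the product (convection and chemotactic) terms handled by exactly the paper's add-and-subtract decomposition into an $O(\tau)$ factor times a bounded factor plus a bounded factor times an $O(h^2)$ factor, the central difference with the fictitious node for $R_2$, and the forward time difference combined with the second-order one-sided stencil for $R_3$. Your explicit statement of the needed regularity of $w$, $V_1$, $V_2$ and the positive lower bound on $G$ (so the divisions by $g^n$ are harmless) is a point the paper leaves implicit, but it does not change the argument.
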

\begin{proof}[The proof of Proposition \ref{Parabolic-error-estimate}]
At the point $(t^n,z_j)$, we have the error estimate
\begin{equation}
\begin{split}
R_1(w_j^n,g^n)&=[L_1(w,G)]_j^n-L_{h1}(w_j^n,g^n)\\
&=\frac{\partial w}{\partial t}(t^n,z_j)-\frac{w_j^{n+1}-w_j^n}{\tau}-\frac{z_j}{2}\frac{G^{'}(t^n)}{G(t^n)}\frac{\partial w}{\partial z}(t^n,z_j)+\frac{z_j}{2}\frac{w_{j+1}^n-w_{j-1}^n}{2h}\frac{g^{n+1}-g^n}{g^n\tau}
\\&-\frac{1}{G(t^n)}\frac{\partial^2w}{\partial z^2}(t^n,z_j)+\frac{1}{g^n}\frac{w_{j-1}^n-2w_{j}^n+w_{j+1}^n}{h^2}\\
&-\frac{1}{G(t^n)}(-\chi_1\frac{\partial V_1}{\partial z}(t^n,z_j)+\chi_2\frac{\partial V_2}{\partial z}(t^n,z_j))\frac{\partial w}{\partial z}(t^n,z_j)
\\&+\frac{1}{g^n}(-\chi_1\frac{V_{1,j+1}^n-V_{1,j-1}^n}{2h}+\chi_2\frac{V_{2,j+1}^n-V_{2,j-1}^n}{2h}
)\frac{w_{j+1}^n-w_{j-1}^n}{2h}
\end{split}
\end{equation}

By using Taylor's expansion at the point $(t^n,z_j)$, we have the following result
\begin{equation}
\label{erroreq1}
\frac{\partial w}{\partial t}(t^n,z_j)-\frac{w_j^{n+1}-w_j^n}{\tau}=O(\tau),
\end{equation}

\begin{equation}
\label{erroreq2}
\begin{split}
&-\frac{z_j}{2}\frac{G^{'}(t^n)}{G(t^n)}\frac{\partial w}{\partial z}(t^n,z_j)+\frac{z_j}{2}\frac{w_{j+1}^n-w_{j-1}^n}{2h}\frac{g^{n+1}-g^n}{g^n\tau}
\\
&=\frac{z_j}{2g^n}(-G^{'}(t^n)\frac{\partial w}{\partial z}(t^n,z_j)+\frac{w_{j+1}^n-w_{j-1}^n}{2h}\frac{g^{n+1}-g^n}{\tau})
\\
&=\frac{z_j}{2g^n}(-G^{'}(t^n)\frac{\partial w}{\partial z}(t^n,z_j)+\frac{g^{n+1}-g^n}{\tau}\frac{\partial w}{\partial z}(t^n,z_j)-\frac{g^{n+1}-g^n}{\tau}\frac{\partial w}{\partial z}(t^n,z_j)
\\
&\quad +\frac{w_{j+1}^n-w_{j-1}^n}{2h}\frac{g^{n+1}-g^n}{\tau})
\\
&=\frac{z_j}{2g^n}\{\frac{\partial w}{\partial z}(t^n,z_j)[\frac{g^{n+1}-g^n}{\tau}-G^{'}(t^n)]
\\
&\quad+[\frac{g^{n+1}-g^n}{\tau}-G^{'}(t^n)+G^{'}(t^n)][\frac{w_{j+1}^n-w_{j-1}^n}{2h}-\frac{\partial w}{\partial z}(t^n,z_j)]\}\\
&=O(\tau+h^2),
\end{split}
\end{equation}

\begin{equation}
\label{erroreq3}
-\frac{1}{G(t^n)}\frac{\partial^2w}{\partial z^2}(t^n,z_j)+\frac{1}{g^n}\frac{w_{j-1}^n-2w_{j}^n+w_{j+1}^n}{h^2}=O(h^2),
\end{equation}

\begin{equation}
\label{errorv1}
\begin{split}
&\frac{\partial V_1}{\partial z}(t^n,z_j)\frac{\partial w}{\partial z}(t^n,z_j)-\frac{V_{1,j+1}^n-V_{1,j-1}^n}{2h}\frac{w_{j+1}^n-w_{j-1}^n}{2h}
\\
&=\frac{\partial V_1}{\partial z}(t^n,z_j)\frac{\partial w}{\partial z}(t^n,z_j)-\frac{\partial V_1}{\partial z}(t^n,z_j)\frac{w_{j+1}^n-w_{j-1}^n}{2h}
\\
&\quad +\frac{\partial V_1}{\partial z}(t^n,z_j)\frac{w_{j+1}^n-w_{j-1}^n}{2h}-\frac{V_{1,j+1}^n-V_{1,j-1}^n}{2h}\frac{w_{j+1}^n-w_{j-1}^n}{2h}\\
&=\frac{\partial V_1}{\partial z}(t^n,z_j)[\frac{\partial w}{\partial z}(t^n,z_j)-\frac{w_{j+1}^n-w_{j-1}^n}{2h}]
\\
&\quad+[\frac{w_{j+1}^n-w_{j-1}^n}{2h}-\frac{\partial w}{\partial z}(t^n,z_j)+\frac{\partial w}{\partial z}(t^n,z_j)][\frac{\partial V_1}{\partial z}(t^n,z_j)-\frac{V_{1,j+1}^n-V_{1,j-1}^n}{2h}]\\
&=O(h^2),
\end{split}
\end{equation}
Similarly, we have
\begin{equation}
\label{errorv2}
\frac{\partial V_2}{\partial z}(t^n,z_j)\frac{\partial w}{\partial z}(t^n,z_j)-\frac{V_{2,j+1}^n-V_{2,j-1}^n}{2h}\frac{w_{j+1}^n-w_{j-1}^n}{2h}=O(h^2),
\end{equation}
 Using estimates \eqref{errorv1} and \eqref{errorv2}, and the linear combinations we obtain
\begin{equation}
\label{erroreq4}
\begin{split}
&-\frac{1}{G(t^n)}[-\chi_1\frac{\partial V_1}{\partial z}(t^n,z_j)+\chi_2\frac{\partial V_2}{\partial z}(t^n,z_j)]\frac{\partial w}{\partial z}(t^n,z_j)
\\&+\frac{1}{g^n}(-\chi_1\frac{V_{1,j+1}^n-V_{1,j-1}^n}{2h}+\chi_2\frac{V_{2,j+1}^n-V_{2,j-1}^n}{2h}
)\frac{w_{j+1}^n-w_{j-1}^n}{2h}=O(h^2),
\end{split}
\end{equation}
Then from \eqref{erroreq1},\eqref{erroreq2},\eqref{erroreq3}, and \eqref{erroreq4}, we have
\begin{equation}
R_1(w_j^n,g^n)=O(\tau+h^2).
\end{equation}
By the central difference approximation, we have the error estimate:
\begin{equation}
R_2(w_j^n,g^n)=[L_2(w,G)]_j^n-L_{h2}(w_j^n,g^n)=O(h^2)
\end{equation}
Using the three points derivative formula \eqref{boundary-app1},
we obtain
\begin{equation}
R_3(w_j^n,g^n)=[L_3(w,G)]_j^n-L_{h3}(w_j^n,g^n)=O(\tau+h^2)
\end{equation}
\end{proof}

From the above results, we have the following error estimate for the transferred system \eqref{one-free-boundary-eq}.
\begin{thm}[Error estimate]\label{consistency}
 Let $L_h(w, V_1, V_2, g)$ denote all the finite difference scheme of the transferred system \eqref{one-free-boundary-eq} including the boundary approximations, and the corresponding continuous scheme as $L(w, V_1, V_2, g)$, then $L_h(w, V_1, V_2, g)$ is consistent with $L(w, V_1, V_2, g)$ and the local truncation error is
\begin{equation}\label{error-est}
T_j^n(w,V_1,V_2, g)=O(h^2+\tau).
\end{equation}
\end{thm}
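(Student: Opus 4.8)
The plan is to reduce Theorem \ref{consistency} to Proposition \ref{Parabolic-error-estimate} together with a short truncation analysis of the elliptic discretizations, and then to aggregate the individual orders. The full operator $L_h$ collects the parabolic scheme \eqref{one-free-boundary-eq-app2}, its boundary and Stefan approximations \eqref{boundary-cond1-app-2}--\eqref{boundary-cond2-app-2}, and the elliptic discretizations \eqref{elliptic-eq1-appr}--\eqref{elliptic-eq2-appr} (interior central differences) supplemented by the finite-volume boundary equations. First I would observe that consistency is precisely the statement that the local truncation error $T_j^n := [L(w,V_1,V_2,g)]_j^n - L_h(w_j^n,V_{1,j}^n,V_{2,j}^n,g^n)$ tends to zero as $(\tau,h)\to 0$, so it suffices to bound $T_j^n$ componentwise and take the largest order.

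For the parabolic block I would invoke Proposition \ref{Parabolic-error-estimate} directly: the three residuals satisfy $R_1 = O(\tau + h^2)$, $R_2 = O(h^2)$, and $R_3 = O(\tau + h^2)$, which already covers \eqref{one-free-boundary-eq-app1}--\eqref{boundary-cond2-app}. It then remains to treat the elliptic equations. For an interior node $1 \le j \le M-1$, a Taylor expansion of $V_i$ about $(t^n,z_j)$ gives $\frac{V_{i,j-1}^n - 2V_{i,j}^n + V_{i,j+1}^n}{h^2} = \partial_{zz}V_i(t^n,z_j) + O(h^2)$, so the discrete elliptic operator in \eqref{elliptic-eq1-appr}--\eqref{elliptic-eq2-appr} differs from its continuous counterpart \eqref{elliptic-eq1}--\eqref{elliptic-eq2} by $O(h^2)$; here I use that $V_1,V_2$ inherit enough regularity from the classical solvability of the transferred system (\cite[Lemma 3.1]{BaoShen2} and elliptic regularity) to justify the expansion.

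The delicate part is the finite-volume treatment of the elliptic boundary conditions, which I expect to be the main obstacle. There the discrete equations are built from cell averages such as $a_1 = (\frac{1}{h}\int_{z_0}^{z_1} -G(t^n)\,dz)^{-1}$, $d_0 = \frac{2}{h}\int_{z_0}^{z_{1/2}} -\lambda_1\,dz$, and $\phi_0 = \frac{2}{h}\int_{z_0}^{z_{1/2}} -\mu_1 w^n\,dz$, rather than from pointwise differences, so the truncation analysis must be carried out by integrating the exact equation over the half-cell $[z_0,z_{1/2}]$ and comparing the flux $\frac{1}{G}\partial_z V_i$ at the cell interface with its finite-difference surrogate. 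Using the Neumann conditions \eqref{elliptic-condition1}--\eqref{elliptic-condition2} and a midpoint/trapezoidal expansion of the averaged coefficients, the interface flux is reproduced to $O(h^2)$, which is exactly the second-order gain recorded in \cite{Li2000generalized}; this flux-matching estimate is the only place where a naive one-sided difference would degrade to $O(h)$, so the finite-volume averaging is essential to retain second order.

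Finally I would assemble the pieces: every component of $T_j^n$ is either $O(h^2)$ (the elliptic interior and boundary residuals together with $R_2$) or $O(\tau + h^2)$ (the residuals $R_1$ and $R_3$, which carry the forward-Euler time error). Taking the maximum order over all components yields $T_j^n(w,V_1,V_2,g) = O(h^2 + \tau)$, and since this vanishes as $(\tau,h)\to 0$ the scheme $L_h$ is consistent with $L$, which is precisely \eqref{error-est}.
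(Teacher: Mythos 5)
Your proposal is correct and takes essentially the same route as the paper: the paper's own proof of Theorem \ref{consistency} is exactly the combination of Proposition \ref{Parabolic-error-estimate} for the parabolic residuals with the $O(h^2)$ truncation of the central-difference elliptic discretization and the finite-volume boundary treatment, followed by taking the worst order to obtain $O(h^2+\tau)$. You simply spell out the Taylor-expansion and half-cell flux-matching details that the paper compresses into a one-line remark.
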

\begin{proof}
Combining Proposition \ref{Parabolic-error-estimate} for the parabolic equation estimate and the elliptic equation estimate above (central finite difference for the elliptic equation and finite volume scheme for the boundary), we have the local truncation error estimate \eqref{error-est}.
\end{proof}
Before proving the positivity of the density function $w$ and the monotonicity property of the spreading front $g^n$, we need the following maximum principle of the discrete elliptic equations.
\begin{prop}
Consider equations \eqref{elliptic-eq1-appr} and \eqref{elliptic-eq2-appr}, we have the following discrete maximum principle
\begin{equation}\label{discrete-max-princ}
\frac{\mu_i}{\lambda_i}w_{\min}^n\leq  V_{i,j}^n\leq \frac{\mu_i}{\lambda_i}w_{\max}^n,\quad i=1,2,\quad j=1,2\dots,M-1,
\end{equation}
where $w^n_{\min}, w^n_{\max}$ are the minimum and maximum solution of Equation \eqref{parabolic-app}.
\end{prop}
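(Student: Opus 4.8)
The plan is to exploit the $M$-matrix structure of the tridiagonal system satisfied by $V_{i,j}^n$ and run a discrete maximum-principle argument at the extremal node. Fix $i$ (take $i=1$; the case $i=2$ is identical) and abbreviate $\alpha=\frac{1}{G(t^n)h^2}>0$, which is strictly positive since $G(t^n)=h^2(t^n)>0$. Rewriting the interior relation \eqref{elliptic-eq1-appr} at a node $1\le j\le M-1$ as
\begin{equation}
(2\alpha+\lambda_1)\,V_{1,j}^n=\alpha\,V_{1,j-1}^n+\alpha\,V_{1,j+1}^n+\mu_1 w_j^n
\end{equation}
exhibits a strictly dominant positive diagonal and negative off-diagonals (the row sum is $\lambda_1>0$), so each interior value is a positive-weight combination of its neighbours plus the source $\mu_1 w_j^n$. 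This requires $\lambda_i>0$ and $\mu_i\ge 0$, so that $\mu_i/\lambda_i$ is well defined and the source coefficient is nonnegative.

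First I would reduce the two finite-volume boundary equations to the same sign pattern. Evaluating the integrals gives $a_1=-1/G(t^n)$, $d_0=-\lambda_1$ and $\phi_0=-\mu_1$ times a cell-average of $w^n$ on $[z_0,z_{1/2}]$ (and the symmetric values at the right end). Substituting and clearing the common factor $h$, the left boundary relation collapses to
\begin{equation}
\Big(\tfrac{\lambda_1}{2}+\alpha\Big)V_{1,0}^n=\alpha\,V_{1,1}^n+\tfrac{\mu_1}{2}\,\bar w_0^n,\qquad \bar w_0^n\in[w_{\min}^n,w_{\max}^n],
\end{equation}
with the analogous identity at $j=M$. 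The crucial point is that the diagonal stays positive, the off-diagonal stays negative, and the factor $\tfrac12$ appears on both the diagonal correction and the source, so it cancels and leaves the constant $\mu_1/\lambda_1$ intact.

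Next I would let $j_0\in\{0,1,\dots,M\}$ be an index at which $V_{1,\cdot}^n$ attains its maximum over all nodes. If $1\le j_0\le M-1$, then $V_{1,j_0}^n\ge V_{1,j_0\pm1}^n$ forces $\alpha V_{1,j_0-1}^n+\alpha V_{1,j_0+1}^n\le 2\alpha V_{1,j_0}^n$ in the interior identity, leaving $\lambda_1 V_{1,j_0}^n\le \mu_1 w_{j_0}^n\le \mu_1 w_{\max}^n$, i.e. $V_{1,j_0}^n\le \frac{\mu_1}{\lambda_1}w_{\max}^n$. If $j_0=0$ (resp. $j_0=M$), the boundary identity together with $V_{1,0}^n\ge V_{1,1}^n$ gives $\frac{\lambda_1}{2}V_{1,0}^n\le \frac{\mu_1}{2}\bar w_0^n\le \frac{\mu_1}{2}w_{\max}^n$, hence again $V_{1,j_0}^n\le \frac{\mu_1}{\lambda_1}w_{\max}^n$. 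Since $j_0$ is the global maximizer, every $V_{1,j}^n$ satisfies the upper bound; replacing maximum/$\ge$/$w_{\max}^n$ by minimum/$\le$/$w_{\min}^n$ throughout yields the lower bound by the identical case analysis, and restricting to $1\le j\le M-1$ gives \eqref{discrete-max-princ}.

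The hard part will be the boundary bookkeeping: one must confirm that the minus signs hidden in the finite-volume coefficients $a_1,d_0,\phi_0$ really do produce a positive diagonal and a negative off-diagonal, so that the extremal-node inequality $V_{1,0}^n\ge V_{1,1}^n$ can be used in the stated direction and the $\tfrac12$ factor does not corrupt the constant $\mu_i/\lambda_i$. Everything else is the standard comparison at the extremal index, relying only on $\lambda_i>0$, $\mu_i\ge 0$, $G(t^n)>0$, and the definitional bounds $w_{\min}^n\le w_j^n\le w_{\max}^n$; in particular no positivity of $w^n$ is needed here.
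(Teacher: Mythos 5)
Your proof is correct, and its core step --- rewriting the interior relation \eqref{elliptic-eq1-appr} as $(2\alpha+\lambda_1)V_{1,j}^n=\alpha V_{1,j-1}^n+\alpha V_{1,j+1}^n+\mu_1 w_j^n$ with $\alpha=1/(G(t^n)h^2)$ and comparing at an extremal index --- is exactly the paper's argument: the paper picks the maximizer $j^*$, writes the telescoping identity $2h\mu_1 G\,w_{j^*}^n=2\lambda_1 hG\,V_{1,\max}^n+(V_{1,\max}^n-V_{1,j^*-1}^n)+(V_{1,\max}^n-V_{1,j^*+1}^n)$, drops the nonnegative differences, and concludes $V_{1,\max}^n\le \frac{\mu_1}{\lambda_1}w_{\max}^n$, with the minimum and $V_2$ handled symmetrically. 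The genuine difference is your treatment of the boundary nodes. The paper's proof stipulates $j^*\neq 0,M$ and never addresses the possibility that $V_{1,\cdot}^n$ attains its global extremum only at $j=0$ or $j=M$; since the scheme imposes Neumann rather than Dirichlet data on $V_1,V_2$, that case cannot be excluded a priori, so the paper's argument is incomplete precisely where you anticipated the ``hard part'' to be. You close this by evaluating the finite-volume coefficients ($a_1=-1/G(t^n)$, $d_0=-\lambda_1$, $\phi_0=-\mu_1\bar w_0^n$) and checking that the left boundary row collapses to $(\lambda_1/2+\alpha)V_{1,0}^n=\alpha V_{1,1}^n+(\mu_1/2)\bar w_0^n$, where the halved diagonal correction and halved source cancel so the constant $\mu_1/\lambda_1$ survives; the extremal comparison at $j_0\in\{0,M\}$ then gives the same bound, and your conclusion in fact holds for all $j=0,\dots,M$, slightly stronger than \eqref{discrete-max-princ}. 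Two minor points: your claim $\bar w_0^n\in[w_{\min}^n,w_{\max}^n]$ implicitly assumes the half-cell average of $w^n$ is computed from a quadrature or interpolation of the grid values $w_j^n$ of \eqref{parabolic-app} (true for any reasonable choice, e.g.\ $\phi_0=-\mu_1 w_0^n$, but worth one sentence); and note that the paper's displayed rewrite contains typographical slips (a spurious $\nu_1$ for $\mu_1$ and $h$ where factors of $h^2$ belong), whereas your $\alpha$-normalized form is the one actually consistent with \eqref{elliptic-eq1-appr}.
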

\begin{proof}
We only study the discrete maximum principle of Equation \eqref{elliptic-eq1-appr}, the proof for Equation \eqref{elliptic-eq2-appr} is the same. Rewrite Equation \eqref{elliptic-eq1-appr} as the following
\begin{equation}
V_{1,j-1}^n + V_{1,j+1}^n + 2h\nu_1G(t) w_j^n = 2(1 + \lambda_1 hG(t))V_{1,j}^n.
\end{equation}
Let $j=j^*(j^*\neq 0,M)$ be the point where $V_1$ takes the maximum at $t^n$ and denote it as $V_{1,max}^n$, then
$$
2h\mu_1G(t)w_{j^*}^n=2\lambda_1hG(t)V_{1,max}^n+(V_{1,max}^n-V_{1,j^*-1}^n)+(V_{1,max}^n-V_{1,j^*+1}).
$$
Because $V_{1,max}^n-V_{1,j^*-1}^n\geq0$, $V_{1,max}^n-V_{1,j^*+1}\geq0$, and $2h\mu_1G(t)>0$, we have
$$
2h\mu_1G(t)w_{max}^n\geq2h\mu_1G(t)w_{j^*}^n\geq2\lambda_1hG(t)V_{1,max}^n,
$$
where $w_{max}^n$ is the maximum of $w$ at $t^n$. We then conclude that
$$V_{1,max}^n\leq \frac{\mu_1}{\lambda_1}w_{max}^n.$$
Similarly we can obtain
\begin{eqnarray*}
V_{2,max}^n \leq \frac{\mu_2}{\lambda_2}w_{max}^n,\quad
V_{1,min}^n \geq \frac{\mu_1}{\lambda_1}w_{min}^n,\quad
V_{2,min}^n \geq \frac{\mu_2}{\lambda_2}w_{min}^n.
\end{eqnarray*}
\end{proof}
The moving boundary $h(t)$ is monotonicity in the system \eqref{one-free-boundary-eq} which is also preserved in the discretized one.

\begin{thm}[Monotonicity of spreading front]\label{Increasing-S-front}
Let $\chi_1,\chi_2$ be small enough and $\tau$ satisfy
\begin{equation}\label{condition-tau-1}
\tau<\frac{h^2}{\frac{\mu C}{g^0}+h^2(bC-a)}
\end{equation}
then $w^{n}_{M-1}>0,$ and $g^n$ is monotonicity in $n$ with $n\geq0, C=e^{aT}w_{M-1}^0$.
\end{thm}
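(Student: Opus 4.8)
The plan is to argue by induction on the time level $n$, carrying along three coupled invariants: the positivity $w_j^n\ge 0$ for all $0\le j\le M$ (with the strict bound $w_{M-1}^n>0$), an a priori upper bound $w_j^n\le C$, and the near-front inequality $w_{M-2}^n\le 4w_{M-1}^n$, the last of which is exactly what forces $g^{n+1}\ge g^n$ through \eqref{boundary-app2}. The base case $n=0$ is immediate: \eqref{initial-cond2} gives $w_0(z)>0$ on $[0,1)$, so in particular $w_{M-1}^0=w_0(z_{M-1})>0$, and the upper bound holds with $C=e^{aT}w_{M-1}^0$ by its definition.

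For the positivity step I would rewrite the explicit scheme \eqref{parabolic-app2} in the form
\begin{equation*}
w_j^{n+1}=A_j\,w_{j-1}^n+\bigl(B_j+S_3\tau w_j^n\bigr)w_j^n+C_j\,w_{j+1}^n,
\end{equation*}
absorbing the quadratic term into the diagonal, where $A_j,B_j,C_j$ are the bracketed coefficients of \eqref{parabolic-app2} and $S_1,S_2,S_3$ are as in \eqref{notation-s1}--\eqref{notation-s3}. Given the inductive hypotheses it then suffices to show each of these three coefficients is nonnegative. The off-diagonal coefficients factor as $A_j,C_j=\frac{\tau}{g^nh^2}\bigl(1\mp\tfrac{S_1}{4}-\tfrac{z_j\nu(4w_{M-1}^n-w_{M-2}^n)}{4}\bigr)$; here the discrete maximum principle \eqref{discrete-max-princ} bounds $V_{i,j}^n$, hence $S_1$, linearly in $w^n_{\max}\le C$, so taking $\chi_1,\chi_2$ small makes $|S_1|/4$ negligible against the diffusion term $1$, while the front-speed term is controlled by the upper bound together with the invariant $4w_{M-1}^n\ge w_{M-2}^n\ge 0$. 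The genuinely constrained quantity is the effective diagonal $B_j+S_3\tau w_j^n=1-\frac{2\tau}{g^nh^2}+S_2\tau+S_3\tau w_j^n$: since $S_3=\chi_1\mu_1-\chi_2\mu_2-b<0$ for small $\chi$, positivity of this term is what dictates the time-step restriction, and substituting $w_j^n\le C$ and $g^n\ge g^0$ reduces it precisely to a condition of the type \eqref{condition-tau-1}. With all three coefficients nonnegative, $w_j^{n+1}$ is a nonnegative combination of nonnegative quantities; strict positivity at $j=M-1$ follows because $w_M^n=0$ leaves $w_{M-1}^{n+1}\ge(B_{M-1}+S_3\tau w_{M-1}^n)w_{M-1}^n>0$.

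The upper bound I would propagate by a discrete Gronwall estimate: summing the nonnegative coefficients gives $A_j+B_j+C_j=1+S_2\tau-\frac{z_j\nu(4w_{M-1}^n-w_{M-2}^n)\tau}{2g^nh^2}$, and dropping the nonpositive quadratic term together with the nonnegative convective term (again using the near-front invariant) yields $w_j^{n+1}\le(1+a\tau+O(\chi))\max_j w_j^n$, the $O(\chi)$ coming from the chemotactic part of $S_2$. Iterating over $n\tau\le T$ produces the exponential factor $e^{aT}$ and hence the bound $C$, closing the second invariant.

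The main obstacle is the monotonicity of $g^n$, that is, closing the near-front invariant $w_{M-2}^{n+1}\le 4w_{M-1}^{n+1}$, because the updates of $w_{M-1}$ and $w_{M-2}$ also involve $w_{M-3}^n$ and the chemotactic coefficients, so the inequality does not close on itself at two grid points. I expect to have to strengthen it into a discrete near-boundary gradient (or spatial monotonicity) property of the whole profile that the scheme preserves under the smallness of $\chi_1,\chi_2$ and the step restriction \eqref{condition-tau-1}. This propagated profile control, which is the discrete analogue of $u_x(t,h(t))\le 0$ for the continuous front, is exactly what guarantees $4w_{M-1}^n-w_{M-2}^n\ge 0$ and therefore, via \eqref{boundary-app2}, that $g^{n+1}\ge g^n$ for every $n\ge 0$.
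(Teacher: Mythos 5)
Your proposal stalls at precisely the step you flag yourself: the near-front invariant $w_{M-2}^n\le 4w_{M-1}^n$ is never proved, only postulated as something you ``expect'' to strengthen into a preserved profile-monotonicity property. As you correctly observe, the two-point inequality does not close under the update \eqref{parabolic-app2} (the stencil pulls in $w_{M-3}^n$ and the chemotactic terms), and you exhibit neither the stronger invariant nor a proof that the scheme preserves it; so the monotonicity of $g^n$ --- the actual content of the theorem --- is left unproven. The paper sidesteps this closure problem entirely by a consistency argument rather than a discrete induction on the profile: since the solution of \eqref{one-free-boundary-eq3} is classical and $w_M^n=0$, Taylor expansion at $z_M=1$ gives $w_{M-2}^n=2w_{M-1}^n+O(h^2)$ (equation \eqref{eqwM-2}), whence $4w_{M-1}^n-w_{M-2}^n=2w_{M-1}^n+O(h^2)$ and \eqref{boundary-app2} becomes $g^{n+1}=g^n+\frac{\tau\nu}{h}\big(2w_{M-1}^n+O(h^2)\big)$ as in \eqref{gn2}. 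Everything then reduces to the single scalar fact $w_{M-1}^n>0$: the same Taylor relation collapses the update at $j=M-1$ into the essentially logistic recursion \eqref{w-app}, the crude estimate $w_{M-1}^{n+1}<(1+\tau a)w_{M-1}^n$ yields $w_{M-1}^n<e^{aT}w_{M-1}^0=C$, and the time-step restriction \eqref{condition-tau-1} then makes the multiplier $\phi_{M-1}^n$ in \eqref{w-est} positive. (The base case $g^1>g^0$ comes from the Hopf lemma applied to $w^0$ at $z_M$, not from a discrete assumption.) Note the paper does not need, at this stage, positivity of all $w_j^n$ --- only of $w_{M-1}^n$.

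There is also a structural issue in your plan: you carry full positivity $w_j^n\ge 0$ and the global bound $w_j^n\le C$ as inductive invariants, but in the paper these are the content of Theorem \ref{Positivity}, whose proof explicitly uses the monotonicity $g^n>g^0$ established in Theorem \ref{Increasing-S-front}; importing them here risks circularity unless all three invariants are closed simultaneously in one induction --- which is exactly what your unresolved near-front inequality prevents. To be fair, the paper's step \eqref{eqwM-2} silently identifies the discrete values with samples of the smooth solution, so its argument is itself of consistency type rather than a purely discrete one; but it is complete on its own terms, whereas your write-up leaves the decisive inequality $4w_{M-1}^n\ge w_{M-2}^n$ as a conjecture. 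If you want a genuinely discrete proof along your lines, you must formulate and propagate the stronger near-boundary profile property you allude to; as written, the proposal does not prove the theorem.
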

\begin{proof}
We prove the positivity and monotonicity of the free boundary $g^n$ by using the induction principle on the index $n$. For $n=0$, from the initial condition of $w_0(z)$, we have $w_j^0>0,\;0\leq j\leq M-1,w_M^0=0$. Additionally, by Hopf lemma the left derivative of $w^0$ at $z_M$ is negative and hence the corresponding difference approximation of Equation \eqref{boundary-app1} with small enough $h$ has
\begin{equation}
(3w_M^n-4w_{M-1}^n+w_{M-2}^n)<0,
\end{equation}
which is equivalent to $(w_{M-2}^0-4w_{M-1}^0)<0$. So combined with $g^0>0$ and \eqref{boundary-app1} with small enough $h$, we have
\begin{equation}
g^1>g^0>0.
\end{equation}

By using Taylor's expansion on the left of $z_M=1$ at $t^n$, we have
\begin{equation}
w=w_M^n+\frac{w_M^n-w_{M-1}^n}{h}(z-1)+O(h^2).
\end{equation}
Let $w=w_{M-2}^n$, then we have
\begin{equation}
w_{M-2}^n=w_M^n+\frac{w_M^n-w_{M-1}^n}{h}\cdot(-2h)+O(h^2),
\end{equation}
combined with $w_M^n=0$, we obtain
\begin{equation}
\label{eqwM-2}
w_{M-2}^n=2w_{M-1}^n+O(h^2),\quad n\geq 0.
\end{equation}
Plugging \eqref{eqwM-2} into \eqref{boundary-app1}, we have
\begin{equation}\label{gn2}
g^{n+1}=g^n+\frac{\tau\nu}{h}(2w_{M-1}^n+O(h^2)).
\end{equation}
If $w_{M-1}^n$ is positive, then from \eqref{gn2} $g^n$ is positive and increasing with $n$.
By using \eqref{eqwM-2} and let $j=M-1$, from \eqref{parabolic-app2}  we have
\begin{equation}
\label{eqwM-1}
\begin{split}
w_{M-1}^{n+1}&=(1+\tau(a-bw_{M-1}^n)-\frac{\tau}{h^2}\frac{z_{M-1}}{g^n}\nu w_{M-1}^n)w_{M-1}^n\\
&-(-\chi_1(V_{1,M}^n-V_{1,M-2}^n)+\chi_2(V_{2,M}^n-V_{2,M-2}^n))\frac{\tau}{h^2}\frac{1}{2g^n}w_{M-1}^n\\
&+(-\chi_1\lambda_1V_{1,M-1}^n+\chi_2\lambda_2V_{2,M-1}^n)\tau w_{M-1}^n\\
&+(\chi_1\mu_1-\chi_2\mu_2)\tau({w_{M-1}^n})^2+O(h^2)
\end{split}
\end{equation}
In order to preserve the stability in the forward approximation of parabolic equation, one needs the requirement such that $\frac{\tau}{h^2}<\frac{1}{2}$, and from the discrete maximum principle the bound of $V_1^n$ and $V_2^n$ are controlled by $w^n$ which is bounded by iteration. Assume $\chi_1$ and $\chi_2$ are small enough, we have the approximation
\begin{equation}\label{w-app}
w_{M-1}^{n+1}\approx (1+\tau(a-bw_{M-1}^n)-\frac{\tau}{h^2}\frac{z_{M-1}}{g^n}\nu w_{M-1}^n)w_{M-1}^n.
\end{equation}
In the following, we prove that $w_{M-1}^n$ has a up bound independent of $n$ (n depends on T).
By induction, we assume that $g^n>g^{n-1}>\cdots>g^1>g^0$. With small enough parameters $\tau,h,\chi_1,\chi_2$, and $g^n>g^0,z_{M-1}<1$ we have
\begin{eqnarray}
w_{M-1}^{n+1}&\approx& (1+\tau(a-bw_{M-1}^n)-\frac{\tau}{h^2}\frac{z_{M-1}}{g^n}\nu w_{M-1}^n)w_{M-1}^n\nonumber
\\
&>&(1+\tau(a-bw_{M-1}^n)-\frac{\tau}{h^2}\frac{1}{g^0}\nu w_{M-1}^n)w_{M-1}^n\nonumber
\\
&=&\phi_{M-1}^nw_{M-1}^n \label{w-est}
\end{eqnarray}
where $\phi_{M-1}^n=1+\tau(a-bw_{M-1}^n)-\frac{\tau}{h^2}\frac{1}{g^0}\nu w_{M-1}^n$. From \eqref{w-app} we obtain $w_{M-1}^{n+1}<(1+\tau a)w_{M-1}^n$ and
\begin{equation}
w_{M-1}^n<(1+\tau a)w_{M-1}^{n-1}<\cdots<(1+\tau a)^nw_{M-1}^0
\end{equation}
Denote the total time from $t^0$ to $t^{n+1}$ as $T$ and we have $T=(n+1)\tau$, then
\begin{equation}
(1+\tau a)^n<(1+\tau a)^{n+1}\leq e^{a(n+1)\tau}=e^{aT},
\end{equation}
which leads to $w_{M-1}^n<e^{aT}w_{M-1}^0$. Furthermore, if the time step $\tau$ satisfies
\begin{equation}
\tau<\frac{h^2}{\frac{\mu C}{g^0}+h^2(bC-a)},
\end{equation}
we obtain $\phi^n_{M-1}>0$ and $w^{n+1}_{M-1}>0$ where $C=e^{aT}w_{M-1}^0$. Then combined Equation \eqref{gn2} with the positivity of $w^n_{M-1}$, we conclude the monotonicity of the spreading front $g^n$.
\end{proof}

\begin{thm}[Positivity and boundedness of the discrete solution]\label{Positivity}
In Equation \eqref{one-free-boundary-eq-app2}, let $\tau$ satisfy
\begin{equation}
\label{tau-condition}
\tau < \tau_0=\min\{\frac{h^2}{\frac{\mu C}{g^0}+h^2(bC-a)},\frac{h^2}{\frac{2}{g^0}+h^2((2\chi_2\mu_2+b)e^{(a+U(\chi_1\mu_1+\chi_2\mu_2))T}|w^{0}_{max}|-a)}\},
\end{equation}
 with small $\chi_1,\chi_2$ and let $M$ large enough which is equivalent to small $h$, then the solution of \eqref{one-free-boundary-eq-app2} satisfies $w_j^n\geq 0$ with uniform up-bound,\quad for $0\leq j\leq M,n\geq 0$.
\end{thm}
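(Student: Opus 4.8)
The plan is to prove the two assertions \emph{simultaneously} by induction on the time level $n$, carrying the statements ``$w_j^n\ge 0$ for all $0\le j\le M$'' and ``$w_{\max}^n\le C_2$'' together, where $C_2:=e^{(a+U(\chi_1\mu_1+\chi_2\mu_2))T}|w^0_{\max}|$ is the claimed uniform upper bound. The base case $n=0$ is immediate from the initial condition \eqref{initial-cond2}. Throughout, the discrete maximum principle \eqref{discrete-max-princ} is the workhorse: it lets me replace every $V_{i,j}^n$ by the a priori bound $\tfrac{\mu_i}{\lambda_i}w_{\max}^n$, so that $S_1=O(\chi h)$ (the nodal differences of the $V_i$ are themselves $O(h)$), $a-\chi_1\mu_1 w_{\max}^n\le S_2\le a+\chi_2\mu_2 w_{\max}^n$, and $S_3=\chi_1\mu_1-\chi_2\mu_2-b<0$ once $\chi_1,\chi_2$ are small. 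I also use $g^n\ge g^0$ and $4w_{M-1}^n-w_{M-2}^n>0$ from Theorem \ref{Increasing-S-front}.

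First I would establish positivity. Writing \eqref{parabolic-app2} as $w_j^{n+1}=A_j w_{j-1}^n+\big(B_j+S_3\tau w_j^n\big)w_j^n+C_j w_{j+1}^n$, it suffices to check that each bracket is nonnegative. For the off-diagonal coefficients I factor out $\tfrac{\tau}{g^n h^2}$, giving $A_j,C_j=\tfrac{\tau}{g^n h^2}\big(1-\tfrac{z_j\nu(4w_{M-1}^n-w_{M-2}^n)}{4}\mp\tfrac{S_1}{4}\big)$. By \eqref{eqwM-2} the quantity $4w_{M-1}^n-w_{M-2}^n=2w_{M-1}^n+O(h^2)$, and since $w$ vanishes at $z=1$ the value $w_{M-1}^n$ is itself $O(h)$; together with $S_1=O(\chi h)$ this makes the brackets $1-O(h)-O(\chi h)\ge0$ for small $h,\chi_1,\chi_2$, so $A_j,C_j\ge0$. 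For the diagonal bracket I invoke $w_j^n\le C_2$ to bound $B_j+S_3\tau w_j^n\ge 1-\tfrac{2\tau}{g^n h^2}+\tau\big(a-(\chi_1\mu_1+\chi_2\mu_2+b)C_2\big)$, and then $g^n\ge g^0$ turns the requirement $B_j+S_3\tau w_j^n\ge0$ into precisely the second bound on $\tau$ in \eqref{tau-condition}. Hence $w_j^{n+1}\ge0$.

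Next I would close the upper bound. Since $A_j,C_j\ge0$ and the convective part of $A_j+B_j+C_j=1+S_2\tau-\tfrac{z_j\tau\nu(4w_{M-1}^n-w_{M-2}^n)}{2h^2g^n}$ is $\le0$ by Theorem \ref{Increasing-S-front}, evaluating \eqref{parabolic-app2} at a node $j^\ast$ where the maximum is attained and replacing the neighbours by $w_{\max}^n$ gives $w_{\max}^{n+1}\le(1+S_2\tau)w_{\max}^n+S_3\tau(w_{\max}^n)^2$. Using $S_2\le a+\chi_2\mu_2 w_{\max}^n$, $S_3\le\chi_1\mu_1$, and linearizing the quadratic term via the a priori bound $w_{\max}^n\le U$ from the global-existence estimate \cite[Theorem 1.2]{BaoShen2}, I obtain the linear recursion $w_{\max}^{n+1}\le\big(1+(a+U(\chi_1\mu_1+\chi_2\mu_2))\tau\big)w_{\max}^n$. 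Iterating and using $(1+c\tau)^{n+1}\le e^{c(n+1)\tau}\le e^{cT}$ with $c=a+U(\chi_1\mu_1+\chi_2\mu_2)$ yields $w_{\max}^{n+1}\le C_2$, closing the induction.

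The main obstacle is the coupling between the two claims: the time-step restriction required for nonnegativity of the diagonal bracket involves the very bound $C_2$ that the boundedness part must supply, so positivity and boundedness cannot be established separately and must be advanced together in a single induction. A secondary technical point is making the heuristic \eqref{eqwM-2} rigorous enough to conclude that the convective coefficient is a genuine $O(h)$ perturbation of the diffusion—this is what forces $A_j,C_j\ge0$ for small $h$ \emph{regardless of the size of} $\nu$—and verifying that the linearization of the logistic term in the Gronwall step reproduces exactly the constants appearing in \eqref{tau-condition}.
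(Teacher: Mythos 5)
Your proposal is correct and follows essentially the same route as the paper's own proof: the same rewriting of the scheme as $w_j^{n+1}=a_j^nw_{j-1}^n+b_j^nw_j^n+c_j^nw_{j+1}^n$ with nonnegativity of the three coefficients checked via the discrete maximum principle, the boundary expansion \eqref{eqwM-2} with $w_{M-1}^n>0$ from Theorem \ref{Increasing-S-front}, and smallness of $h,\chi_1,\chi_2$, followed by the same Gronwall-type iteration using the a priori bound $U$ to produce the exponential upper bound and the time-step restriction \eqref{tau-condition}. The only differences are organizational and harmless: you run a single simultaneous induction where the paper argues positivity first and boundedness second (sharpening the $\tau$-condition at the end), and your diagonal-entry constant $\chi_1\mu_1+\chi_2\mu_2+b$ differs slightly (not ``precisely,'' as you claim) from the paper's $2\chi_2\mu_2+b$, both being sufficient conditions of the same form.
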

\begin{proof}
From Equations \eqref{parabolic-app2}, \eqref{notation-s1}, \eqref{notation-s2}, \eqref{notation-s3}, we can rewrite $w_j^{n+1}$ as
\begin{equation}
w_j^{n+1}=a_j^nw^n_{j-1}+b_j^nw^n_{j}+c_j^nw^n_{j+1},
\end{equation}
where
\begin{eqnarray}
a_j^n &=&\frac{\tau}{h^2}(\frac{1}{g^n}-\frac{z_j\nu(4w^n_{M-1}-w^n_{M-2})-S_1}{4g^n}),\label{notation-a}
\\
b_j^n &=& 1-\frac{2\tau}{g^nh^2}+S_2\tau+S_3\tau w_j^n,\label{notation-b}
\\
c_j^n &=& \frac{\tau}{h^2}(\frac{1}{g^n}+\frac{z_j\nu(4w^n_{M-1}-w^n_{M-2})+S_1}{4g^n}).\label{notation-c}
\end{eqnarray}
If $a_j^n,b_j^n,c_j^n$ are positive, the positivity of $w^n_j, n\geq 0, 0\leq j\leq M$ can be proved by induction.
First, we consider
\begin{equation*}
a_j^n=\frac{\tau}{h^2}(\frac{1}{g^n}-\frac{z_j\nu(4w^n_{M-1}-w^n_{M-2})-S_1}{4g^n})
\end{equation*}
From the discrete maximum principle, regularity of $w$ at $z_M = 1 (w^n_M=0),$ by taking large enough $M$ or small enough $h$ and small enough chemotactic sensitivity $\chi_1,\chi_2$, then plugging \eqref{eqwM-2} into \eqref{notation-a} and combined with the positivity of $w^n_{M-1}$ and $0\leq z_j<1$, we have
\begin{equation}
\label{condition-an1}
w^n_{M-1}< \frac{2}{\nu}-\frac{S_1}{2z_j\nu},
\end{equation}
with negative $S_1$, or stronger condition
\begin{equation}\label{condition-an2}
w^n_{M-1} <\frac{2}{\nu}-\frac{S_1}{2h\nu},
\end{equation}
with positive $S_1$. Both conditions lead to $a_j^n>0$.

Next, we study the positivity of $c_j^n$.
\begin{equation}
\label{eqcn}
c_j^n=\frac{\tau}{h^2}(\frac{1}{g^n}+\frac{z_j\nu(4w^n_{M-1}-w^n_{M-2})+S_1}{4g^n}).
\end{equation}
Plugging \eqref{eqwM-2} into \eqref{eqcn} and combine with the positivity of $w^n_{M-1}$ and small enough $\chi_1,\chi_2$, and $h$, the positivity of $c_j^n$ can be guaranteed.

Thirdly, we investigate the positivity of
\begin{equation}
\label{eqbn}
b_j^n=1-\frac{2\tau}{g^nh^2}+S_2\tau+S_3\tau w_j^n.
\end{equation}
In order to obtain $b_j^n>0$, we need
\begin{equation}
\label{position-cond1}
\tau<\frac{h^2}{\frac{2}{g^n}-h^2(S_2+S_3w_j^n)}.
\end{equation}
(While $\chi_1,\chi_2$ are small enough, $\tau(\frac{2}{g^nh^2}-S_2-S_3w_j^n)>0$.) By the discrete maximum principle of the elliptic equation $V_1$ and $V_2$, the monotonicity of $g^n$ $(g^0<g^n),$ we can improve the requirement
\eqref{position-cond1} to
\begin{equation}
\label{position-cond2}
\tau<\min\{\frac{h^2}{\frac{\mu C}{g^0}+h^2(bC-a)},\frac{h^2}{\frac{2}{g^0}+h^2((2\chi_2\mu_2+b)|w^n_{max}|-a)}\},
\end{equation}
where $|w^n_{max}|=\max_{0\leq j\leq M}|w^n_j|$, which leads to the positivity of $b_j^n$.

The positivity of $w_j^n, 0\leq j\leq M, n\geq 0$ is followed from the positivity of $a_j^n,b_j^n,c_j^n$.

For the uniform up-bound, we use the induction method again. By using discrete maximum principle of $V_1^n$ and $V_2^n$, whose values are controlled by maximum of $w^n$, $S_3<0$ and positivity of $a_j^n,b_j^n,c_j^n$, we have estimate in \eqref{parabolic-app2} such that
\begin{equation}
\label{inequality1}
\begin{split}
w_j^{n+1}&\leq(1+\tau(S_2+S_3w_j^n))|w^n_{max}|\\
&\leq(1+\tau((\chi_1\mu_1+\chi_2\mu_2)|w^n_{max}|+a))|w^n_{max}|\\
&\leq(1+\tau a)|w^n_{max}|+\tau(\chi_1\mu_1+\chi_2\mu_2)|w^n_{max}|^2\\
\end{split}
\end{equation}
Because of the existence of up bound for $w^n$, there exist a constant $U$ such that $|w^n_{max}|\leq U$. Furthermore, we have the following uniform up bound estimate
\begin{equation}
\label{inequality2}
\begin{split}
w_j^{n+1}&\leq(1+\tau(a+U(\chi_1\mu_1+\chi_2\mu_2)))|w^n_{max}|\\
&\leq(1+\tau(a+U(\chi_1\mu_1+\chi_2\mu_2)))^2|w^{n-1}_{max}|\\
&\leq\cdots\leq(1+\tau(a+U(\chi_1\mu_1+\chi_2\mu_2)))^{n+1}|w^{0}_{max}|.
\end{split}
\end{equation}
Denote the total time $T=n\tau$, we have
\begin{equation}\label{Uniform-bound}
|w^n_{max}|\leq e^{(a+U(\chi_1\mu_1+\chi_2\mu_2))T}|w^{0}_{max}|,
\end{equation}
and the requirement of \eqref{position-cond2} can be sharped to
\begin{equation*}
%\label{tau-condition}
\tau < \tau_0=\min\{\frac{h^2}{\frac{\mu C}{g^0}+h^2(bC-a)},\frac{h^2}{\frac{2}{g^0}+h^2((2\chi_2\mu_2+b)e^{(a+U(\chi_1\mu_1+\chi_2\mu_2))T}|w^{0}_{max}|-a)}\},
\end{equation*}
which guarantee the positivity of the discrete solution $w^n$ for $0\leq j\leq M,n\geq 0$.
\end{proof}

Before discussing about the stability of our algorithm and for the sake of clarity in the presentation we specify the concept of stability we use below. We recall the definition of the supremum norm of a vector $x=(x_1,x_2,\cdots,x_n)^T$ in $\mathbb{R}^n$ as $\|x\|_{\infty}=\max(|x_1|,|x_2|,\cdots,|x_n|).$
\begin{defn}
In Equation \eqref{one-free-boundary-eq-app2}, the numerical scheme is said to be $\|\cdot\|_{\infty}$ stable in the domain $[0,T]\times[0,1]$, if for every partition with $T=N\tau,Mh=1$ it hold true that:
\begin{equation}
\|w^n\|_{\infty}\leq K\|w^0\|_{\infty},\quad 0\leq n\leq N,
\end{equation}
where $w^n=[w_0^n,w_1^n,\cdots,w_M^n]^T$is the vector solution of the scheme at $t^n$, $K$ is a constant independent of $h,\tau,n$.
\end{defn}
From the results in Theorem \ref{Positivity} and Estimate \eqref{Uniform-bound}, we have the uniform stability result.
\begin{thm}[Stability of discrete solution]\label{stability}
In Equations \eqref{elliptic-eq1-appr}, \eqref{elliptic-eq2-appr} and \eqref{one-free-boundary-eq-app2}, let $\tau$ satisfy
\begin{equation}
\label{tau-condition}
\tau < \tau_0=\min\{\frac{h^2}{\frac{\mu C}{g^0}+h^2(bC-a)},\frac{h^2}{\frac{2}{g^0}+h^2((2\chi_2\mu_2+b)e^{(a+U(\chi_1\mu_1+\chi_2\mu_2))T}|w^{0}_{max}|-a)}\},
\end{equation}
 with small $\chi_1,\chi_2$ and let $M$ large enough which is equivalent to small $h$, then the discrete solution of \eqref{one-free-boundary-eq} is $\|\cdot\|_{\infty}$ stable.
\end{thm}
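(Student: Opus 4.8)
The plan is to obtain stability as an immediate consequence of the positivity and uniform boundedness already secured in Theorem \ref{Positivity}, with no new machinery required. The decisive point is that the hypotheses here — smallness of $\chi_1,\chi_2$, largeness of $M$ (equivalently small $h$), and the time-step bound $\tau<\tau_0$ — coincide exactly with those of Theorem \ref{Positivity}, so every conclusion of that theorem transfers verbatim. In particular there is nothing to re-prove about the scheme itself; the work is purely one of bookkeeping the constants.

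First I would apply Theorem \ref{Positivity} to get $w^n_j\ge 0$ for all $0\le j\le M$ and $n\ge 0$. Nonnegativity is what lets the supremum norm collapse onto the running maximum: since $|w^n_j|=w^n_j$, we have $\|w^n\|_\infty=\max_{0\le j\le M}w^n_j=|w^n_{\max}|$, and likewise $\|w^0\|_\infty=|w^0_{\max}|$ because the initial data satisfy $w^0_j\ge 0$. Next I would invoke the uniform bound \eqref{Uniform-bound}, established inside the proof of Theorem \ref{Positivity} by iterating the one-step estimate \eqref{inequality2}, namely
\begin{equation*}
|w^n_{\max}|\le e^{(a+U(\chi_1\mu_1+\chi_2\mu_2))T}\,|w^0_{\max}|
\end{equation*}
for every $n$ with $n\tau\le T$. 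Combining the two facts and setting $K:=e^{(a+U(\chi_1\mu_1+\chi_2\mu_2))T}$ gives $\|w^n\|_\infty\le K\|w^0\|_\infty$ for all $0\le n\le N$, which is precisely the stability inequality in the definition.

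The one thing deserving verification — and the only point I would call a genuine (if minor) obstacle — is that $K$ is independent of $h$, $\tau$, and $n$, as the definition of $\|\cdot\|_\infty$ stability demands. Here $a$, $\chi_i$, $\mu_i$ are fixed model parameters and $T$ is the fixed horizon, so I only need that the upper-bound constant $U$ appearing in the exponent does not depend on the mesh or on the time level. This is exactly the content of the boundedness assertion in Theorem \ref{Positivity} (the sentence producing a constant $U$ with $|w^n_{\max}|\le U$ uniformly in $n$), so no circularity creeps in: $U$ is pinned down first, and only afterward enters $K$. With $K$ thus seen to be mesh-independent, the deduction is complete.
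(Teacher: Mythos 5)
Your proposal is correct and follows exactly the paper's route: the paper gives no separate proof, stating only that the result follows ``from the results in Theorem \ref{Positivity} and Estimate \eqref{Uniform-bound},'' which is precisely your chain of positivity, $\|w^n\|_\infty=|w^n_{\max}|$, and the exponential bound with $K=e^{(a+U(\chi_1\mu_1+\chi_2\mu_2))T}$. Your explicit check that $K$ is independent of $h$, $\tau$, and $n$ is a useful piece of bookkeeping the paper leaves implicit.
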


\begin{rk}
In the investigation, \eqref{tau-condition} is a strong requirement and it is a sufficient condition for the positivity and stability. However, our simulations indicate larger $h, \tau$ can also guarantee the positivity and stability of the scheme.
\end{rk}
%\begin{proof}[The proof of Theorem \ref{stability}]
%By using discrete maximum principle of $V_1^n$ and $V_2^n$, these values are controlled by maximum of $w^n$, $S_3<0$ and positivity of $a_j^n,b_j^n,c_j^n$, we have estimate in \eqref{parabolic-app2} such that
%\begin{equation}
%\label{inequality1}
%\begin{split}
%w_j^{n+1}&\leq(1+\tau(S_2+S_3w_j^n))|w^n_{max}|\\
%&\leq(1+\tau((\chi_1\mu_1+\chi_2\mu_2)|w^n_{max}|+a))|w^n_{max}|\\
%&\leq(1+\tau a)|w^n_{max}|+\tau(\chi_1\mu_1+\chi_2\mu_2)|w^n_{max}|^2\\
%\end{split}
%\end{equation}
%Because of the existence of up bound for $w^n$, there exist a constant $U$ such that $|w^n_{max}|\leq U$. Furthermore, we have the following uniform up bound estimate
%\begin{equation}
%\label{inequality2}
%\begin{split}
%w_j^{n+1}&\leq(1+\tau(a+U(\chi_1\mu_1+\chi_2\mu_2)))|w^n_{max}|\\
%&\leq(1+\tau(a+U(\chi_1\mu_1+\chi_2\mu_2)))^2|w^{n-1}_{max}|\\
%&\leq\cdots\leq(1+\tau(a+U(\chi_1\mu_1+\chi_2\mu_2)))^{n+1}|w^{0}_{max}|.
%\end{split}
%\end{equation}
%Denote the total time $T=n\tau$, we have
%\begin{equation}
%|w^n_{max}|\leq e^{(a+U(\chi_1\mu_1+\chi_2\mu_2))T}|w^{0}_{max}|.
%\end{equation}
%\end{proof}

\section{Numerical experiments}
In this section, we study the numerical simulations of the free boundary problem of \eqref{one-free-boundary-eq} in the case that $a(t,x) = 2, b(t,x) = 1$. Our theoretical results indicate there exists a critical value $l^* = \frac{\pi}{2}\sqrt{\frac{1}{a}}$, which is independent of the chemotactic sensibility coefficients $\chi_1, \chi_2$, such that spreading of the species is guaranteed for $h_0 \geq l^*$ and vanishing happens for $h_0 < l^*$. Furthermore, in order to obtain local convergence and persistency, chemotactic sensitivity must be small enough.

Compared to \emph{Fisher-KPP} free boundary problems, $\chi_1=\chi_2=0$ in the system \eqref{one-free-boundary-eq}, even if $h_0 < l^*$, the spreading is guaranteed under condition $\nu > \nu^*>0$ or $\sigma>\sigma^*>0$, where $u_0(x) =\sigma \phi(x)$ and $\nu^*$ is an unknown threshold depending on $u_0$ (see \cite[Theorem 3.9]{DuLi}). Because of the lack of comparison principle in Chemotaxis system \eqref{one-free-boundary-eq}, the existence of $\nu^*$ and $\sigma^*$ is still an open problem. However, our numerical simulation do validate such existence. Furthermore, when the spreading happens in \emph{Fisher-KPP} free boundary problem, we have the following asymptotic spreading speed result:
\begin{equation*}
 \lim_{t\to\infty}\frac{h(t,u_0,h_0)}{t} = c^*
\end{equation*}
where $c^*$ is depending on the logistic coefficient $a(t,x)$ (see \cite{DuLi}, \cite{du2013pulsating}, and \cite{Li2016duffusive2}). Such result is also confirmed in our numerical simulations and the theoretical analysis will be under our investigation in the future.

In the following, we show different simulation results depending on different parameter selections. All parameters satisfy conditions {\bf (H1)}-{\bf (H3)}.
\subsection{Numerical vanishing-spreading dichotomy}
Compared to the vanishing-spreading dichotomy in \emph{Fisher-KPP} free boundary problems, vanishing happens when the initial habitat $h_0$, initial solution $u_0(x) =\sigma \phi(x)$, and moving speed $\nu$ are small enough; spreading happens when either $h_0, u_0(x),$ or $\mu$ is big enough. However, because of the lack of comparison principle, similar results are still open in the chemotaxis system \eqref{one-free-boundary-eq}. The following numerical simulations validate these similar results in chemotaxis system.
\begin{Ex}\label{Exm-1}
In the logistic chemotaxis model \eqref{one-free-boundary-eq}, let $h_0 = 2.5 > l^* =1.11, u_0 = \cos(\pi x/2h_0)$ and
$(\chi_1, \chi_2, \nu, \lambda_1,\lambda_2,\mu_1,\mu_2) = (0.02, 0.01, 0.01, 2, 1, 2, 1)$. Figure \ref{Fig:1} and \ref{Fig:2} show the system has spreading tendency and the asymptotic speed $\frac{h(t)}{t}$ converges to a constant.
\begin{figure}[!htb]
   \begin{minipage}{0.48\textwidth}
     \centering
     \includegraphics[width=.9\linewidth,height=4.0cm]{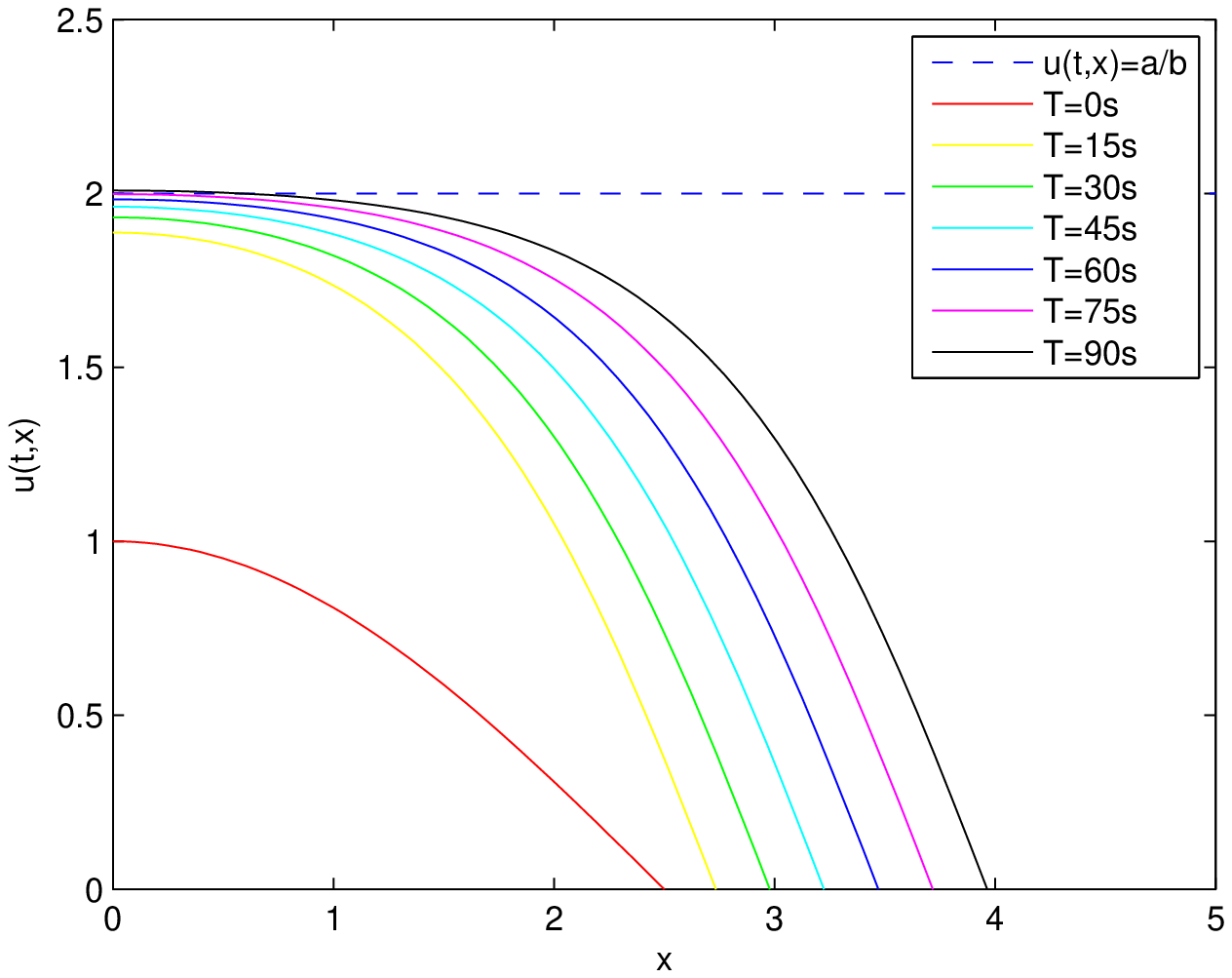}
     \caption{Evolution of the density $u(t,x)$}\label{Fig:1}
   \end{minipage}\hfill
   \begin {minipage}{0.48\textwidth}
     \centering
     \includegraphics[width=.9\linewidth,height=4.0cm]{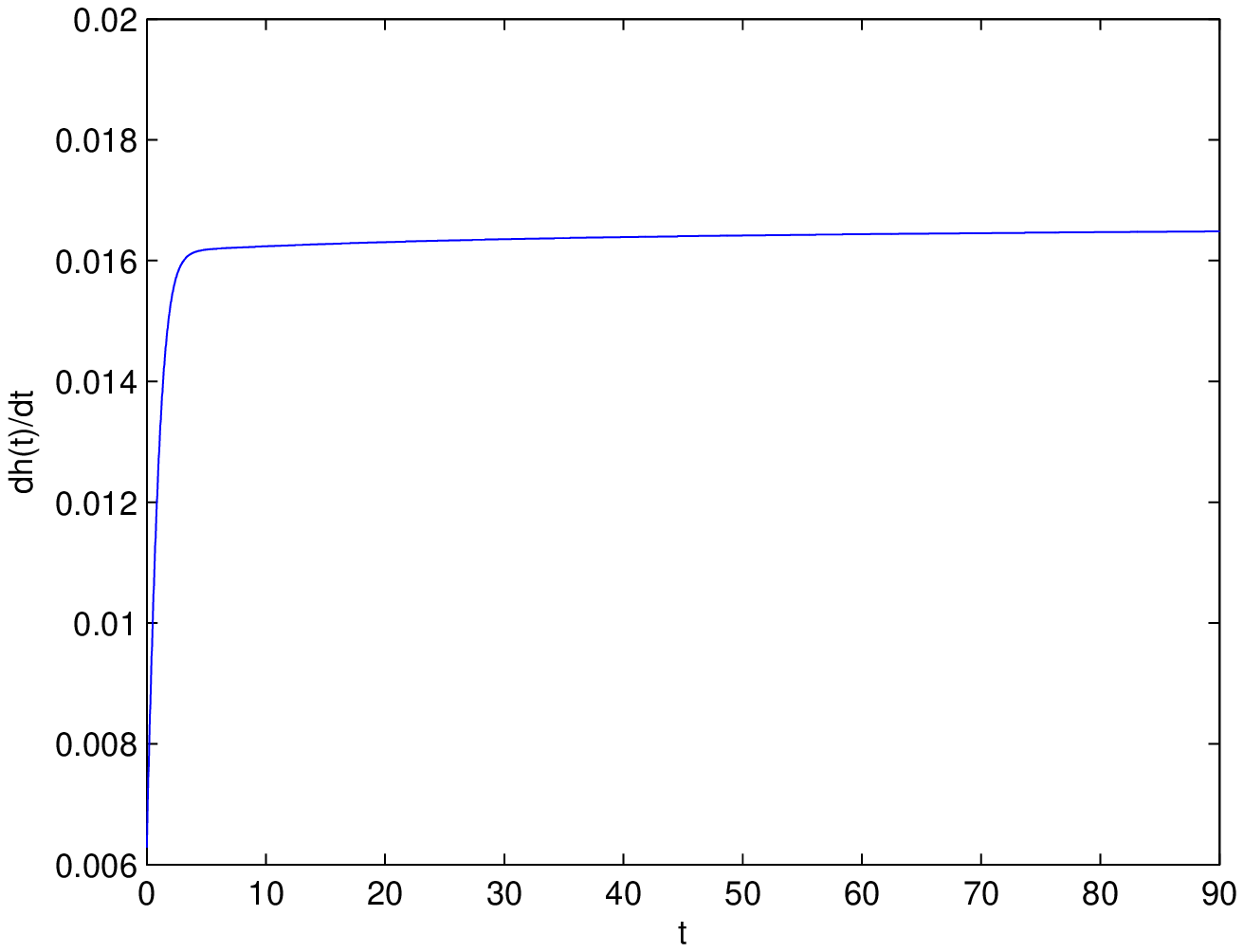}
     \caption{Evolution of the speed $\frac{h(t)}{t}$}\label{Fig:2}
   \end{minipage}
  \end{figure}
\end{Ex}

\begin{Ex}\label{Exm-2}
In the logistic chemotaxis model \eqref{one-free-boundary-eq}, let $h_0 = 0.5 < l^* =1.11, u_0 = \cos(\pi x/2h_0)$, and
$(\chi_1, \chi_2, \nu, \lambda_1,\lambda_2,\mu_1,\mu_2) = (2, 1, 0.8, 1, 2, 1, 2)$. Figure \ref{Fig:3} and \ref{Fig:4} show the system has vanishing tendency and the asymptotic speed $\frac{h(t)}{t}$ converges to zero.
 \begin{figure}[!htb]
   \begin{minipage}{0.48\textwidth}
     \centering
     \includegraphics[width=.9\linewidth,height=4.0cm]{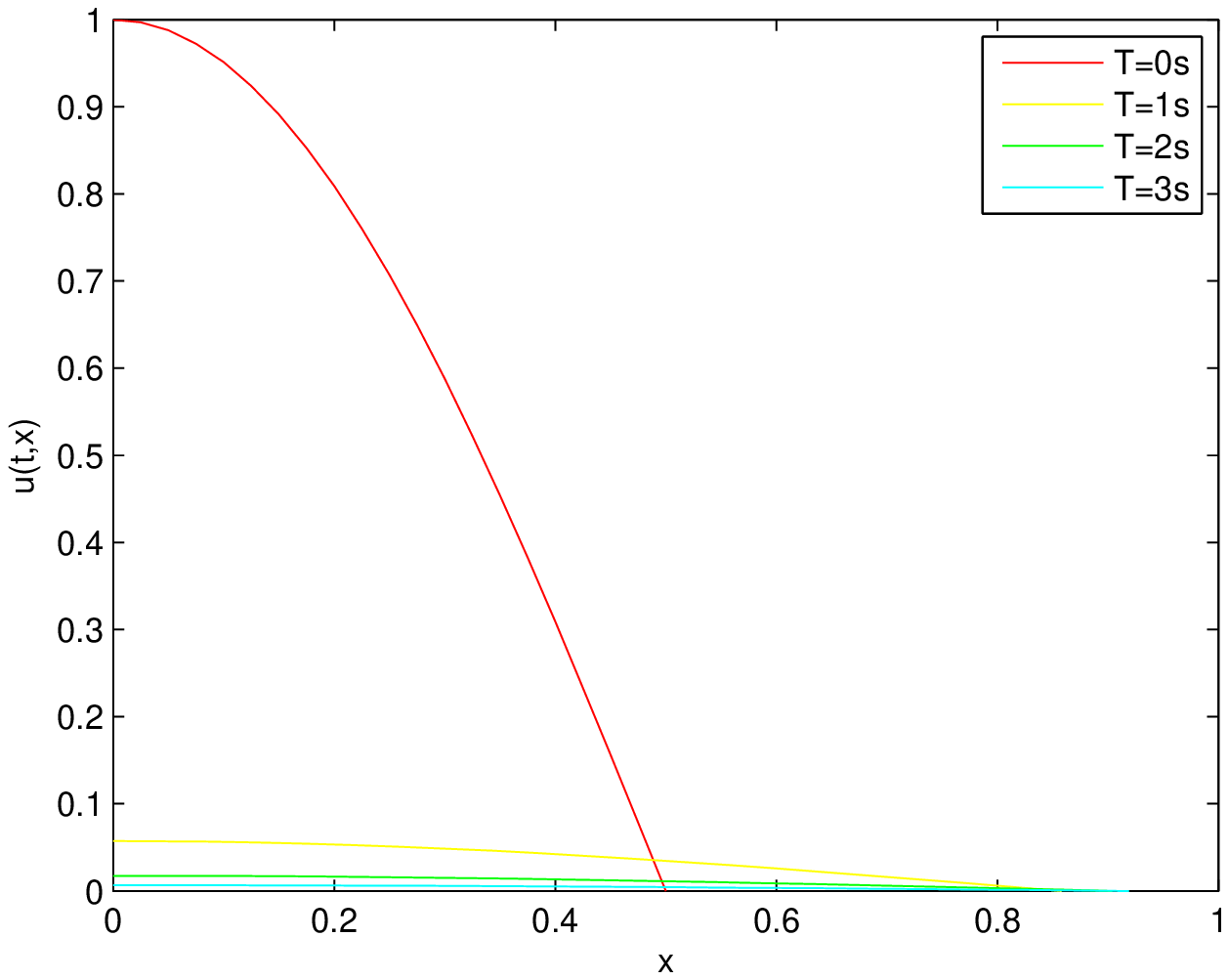}
     \caption{Evolution of the density $u(t,x)$}\label{Fig:3}
   \end{minipage}\hfill
   \begin {minipage}{0.48\textwidth}
     \centering
     \includegraphics[width=.9\linewidth,height=4.0cm]{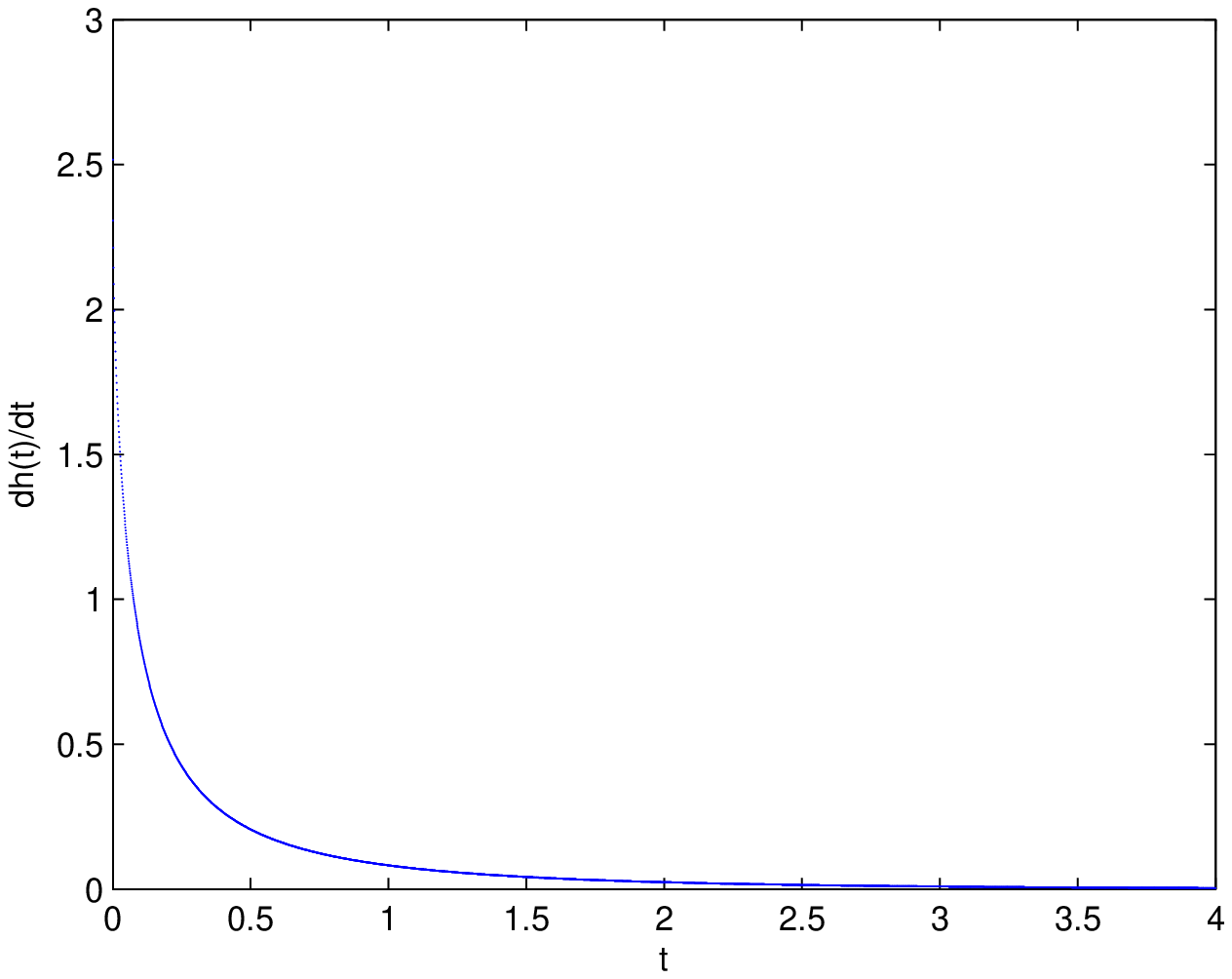}
     \caption{Evolution of the speed $\frac{h(t)}{t}$}\label{Fig:4}
   \end{minipage}
\end{figure}
\end{Ex}

\begin{Ex}\label{Exm-3}
Increase the initial habitat $h_0$ in Example \ref{Exm-2} such that $h_0=2.5>l^*=1.11$, and let $(\chi_1, \chi_2, \nu, \lambda_1,\lambda_2,\mu_1,\mu_2) = (2, 1, 0.8, 1, 2, 1, 2)$. Figure \ref{Fig:5} and \ref{Fig:6} show the system has spreading tendency and the asymptotic speed $\frac{h(t)}{t}$ converges to a positive constant.
\begin{figure}[!htb]
   \begin{minipage}{0.48\textwidth}
     \centering
     \includegraphics[width=.9\linewidth,height=4.0cm]{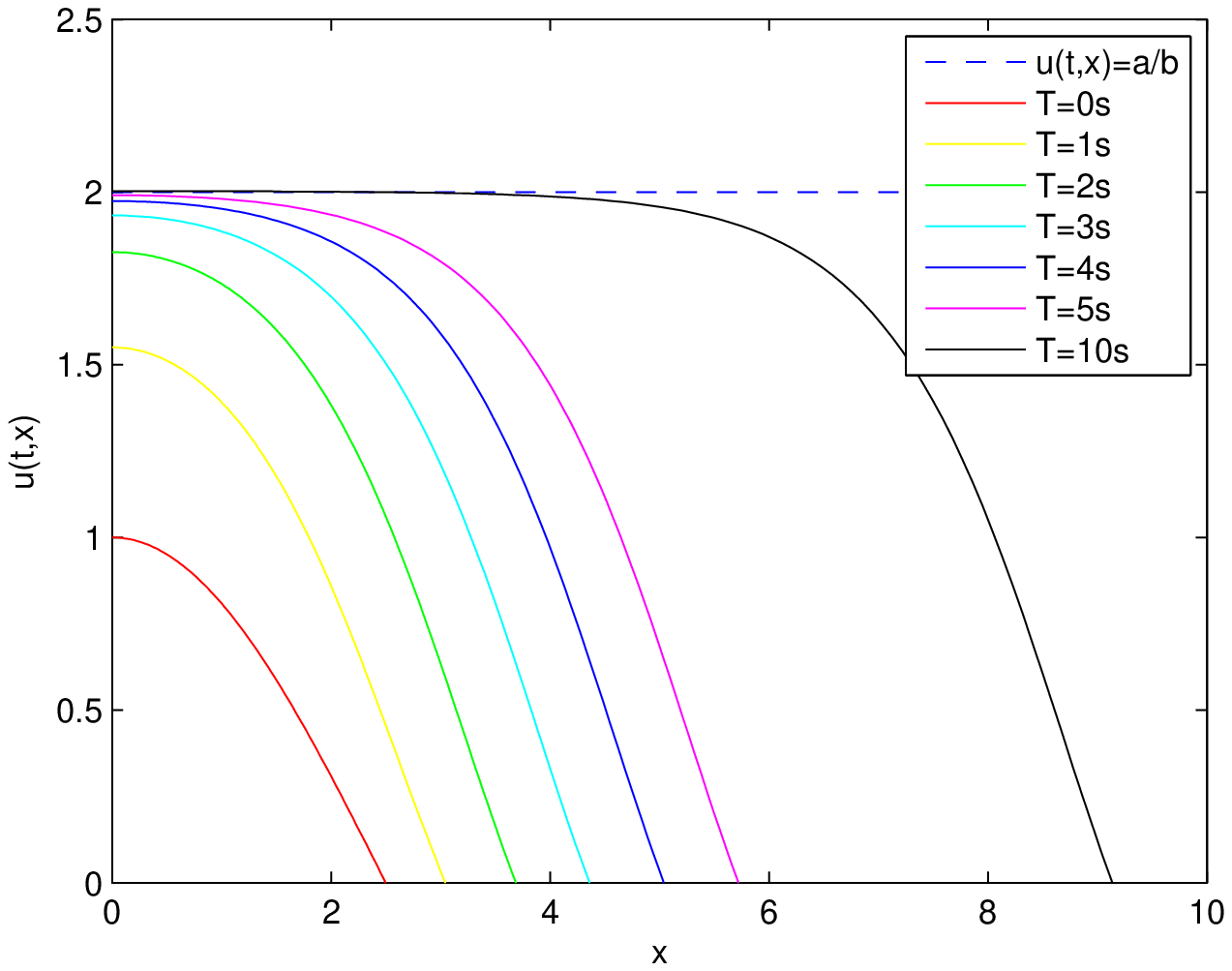}
     \caption{Evolution of the density $u(t,x)$}\label{Fig:5}
   \end{minipage}\hfill
   \begin {minipage}{0.48\textwidth}
     \centering
     \includegraphics[width=.9\linewidth,height=4.0cm]{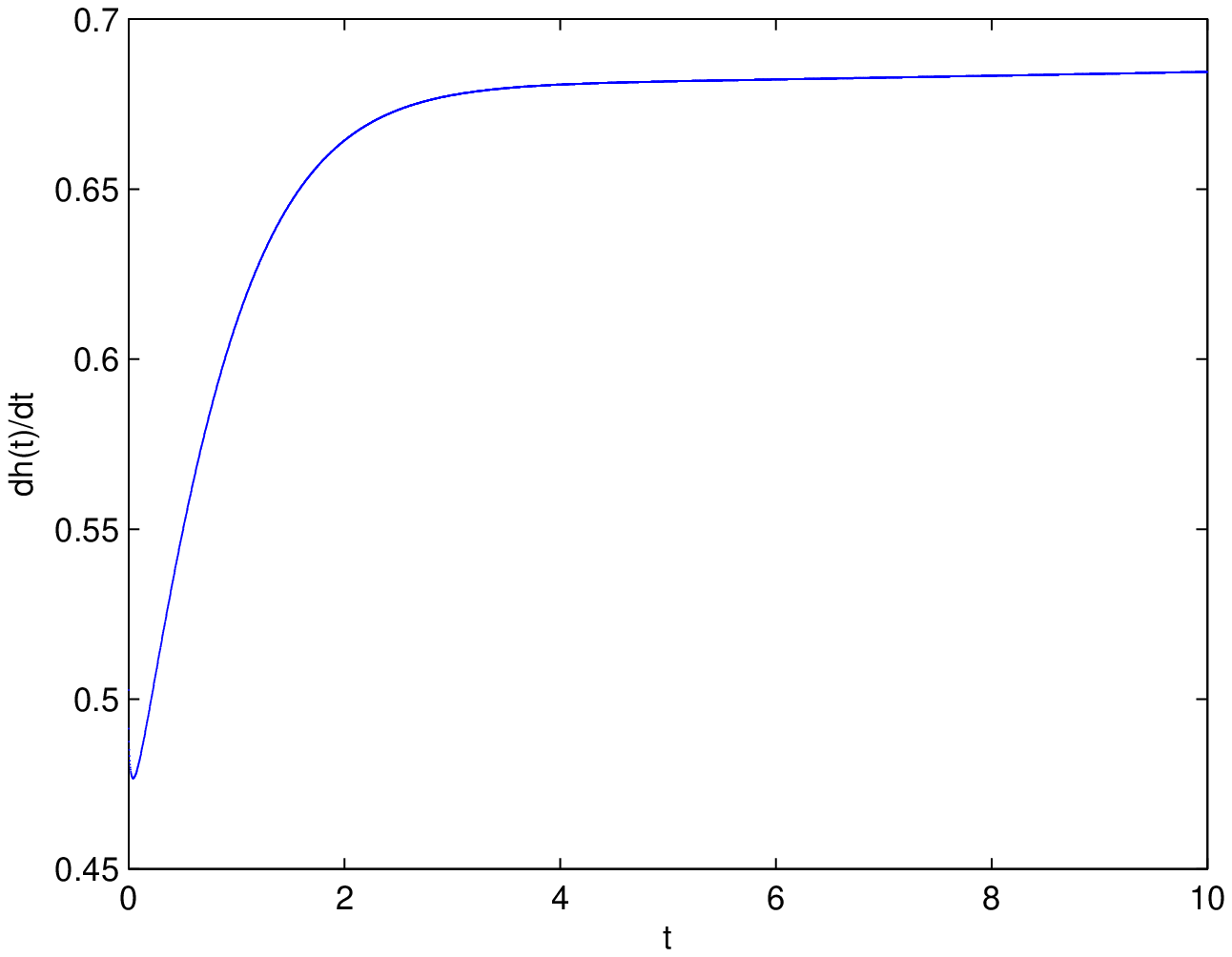}
    \caption{Evolution of the speed $\frac{h(t)}{t}$}\label{Fig:6}
   \end{minipage}
\end{figure}
\end{Ex}
 %\item [(2)]
\subsection{Critical value of the moving speed $\nu^*$}
In the following, we study the influence of the moving speed $\nu$ on the vanishing-spreading dichotomy. Compared to \emph{Fisher-KPP} free boundary problem, there exists a critical value $\nu^*$ such that vanishing happens when $\nu < \nu^*$ and spreading happens in other direction. However, because of the lack of comparison principle, whether there exists a critical value $\nu^*$ in chemotaxis free boundary problem is still an open question, but numerically we find the existence of such $\nu^*$ by dichotomy method.
\begin{Ex}\label{Exm-4}
In the logistic chemotaxis model \eqref{one-free-boundary-eq}, let $h_0 = 1.0 < l^* =1.11, u_0 = \cos(\pi x/2h_0)$ and
$(\chi_1, \chi_2, \nu, \lambda_1,\lambda_2,\mu_1,\mu_2) = (0.2, 0.1, 2, 1, 2, 1, 2)$. Figure \ref{Fig:7} and \ref{Fig:8} show the system has a spreading tendency and the asymptotic speed $\frac{h(t)}{t}$ converges to a positive constant.
\begin{figure}[!htb]
   \begin{minipage}{0.48\textwidth}
     \centering
     \includegraphics[width=.9\linewidth,height=4.0cm]{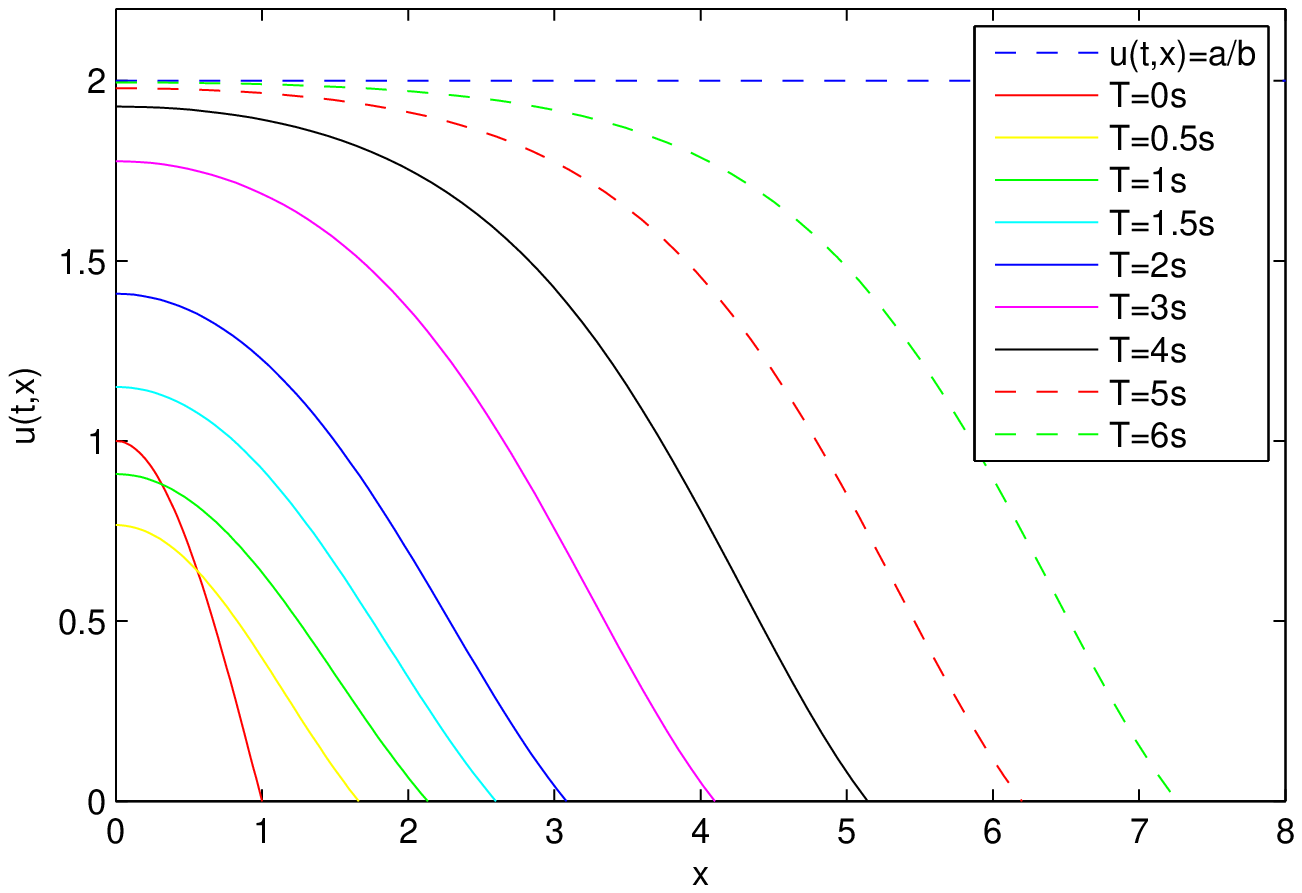}
     \caption{Evolution of the density $u(t,x)$}\label{Fig:7}
   \end{minipage}\hfill
   \begin {minipage}{0.48\textwidth}
     \centering
     \includegraphics[width=.9\linewidth,height=4.0cm]{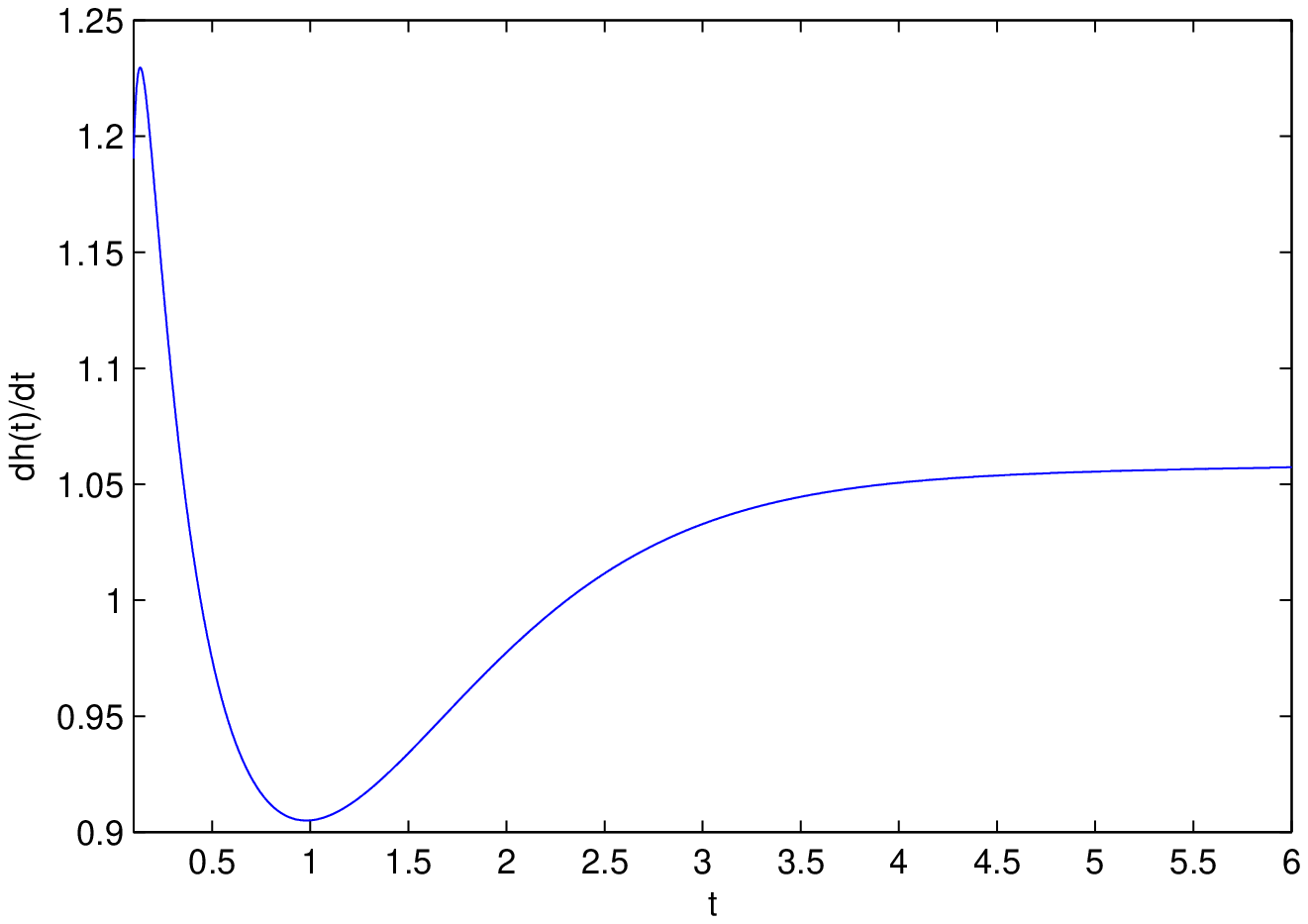}
     \caption{Evolution of the speed $\frac{h(t)}{t}$}\label{Fig:8}
   \end{minipage}
\end{figure}
\end{Ex}

\begin{Ex}\label{Exm-5}
We only change the moving speed to a smaller positive number $\nu = 0.01$ in Exmple \ref{Exm-4}. Figure \ref{Fig:9} and \ref{Fig:10} show the system \eqref{one-free-boundary-eq} has a tendency of vanishing.
\begin{figure}[!htb]
   \begin{minipage}{0.48\textwidth}
     \centering
     \includegraphics[width=.9\linewidth,height=4.0cm]{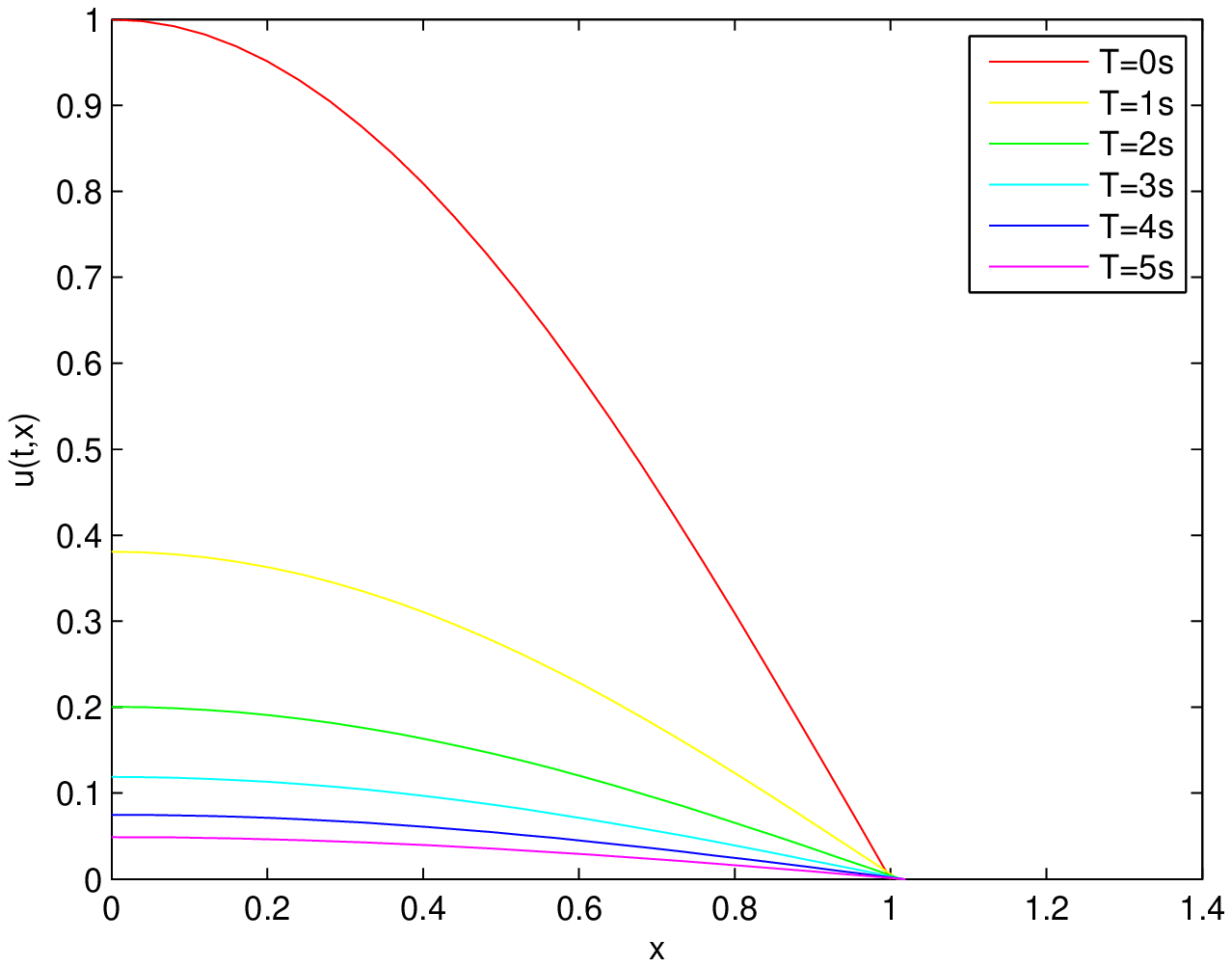}
     \caption{Evolution of the density $u(t,x)$}\label{Fig:9}
   \end{minipage}\hfill
   \begin {minipage}{0.48\textwidth}
     \centering
     \includegraphics[width=.9\linewidth,height=4.0cm]{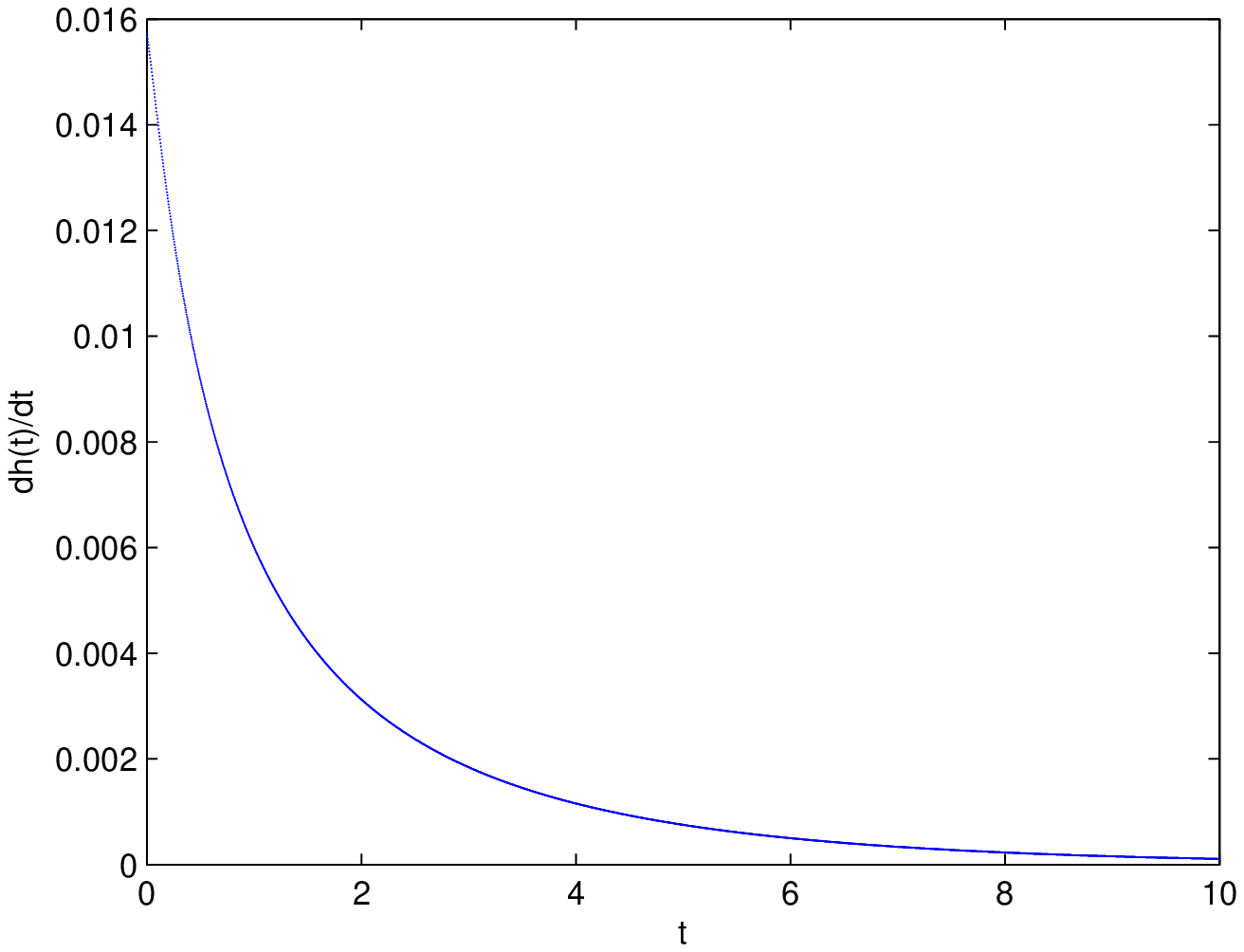}
     \caption{Evolution of the speed $\frac{h(t)}{t}$}\label{Fig:10}
   \end{minipage}
\end{figure}
\end{Ex}

\begin{Ex}\label{Exm-6}
In the logistic chemotaxis model \eqref{one-free-boundary-eq}, let $h_0 = 1.0 < l^* =1.11, u_0 = \cos(\pi x/2h_0)$ and
$(\chi_1, \chi_2, \lambda_1,\lambda_2,\mu_1, \mu_2) = (0.2, 0.1, 1, 2, 1, 2)$.
By the dichotomy method, the simulations indicate the critical $\nu^*$ is between $0.05$ and $0.025$ (see Figure \ref{Fig:11} and \ref{Fig:12}).
\begin{figure}[!htb]
   \begin{minipage}{0.48\textwidth}
     \centering
     \includegraphics[width=.9\linewidth,height=4.0cm]{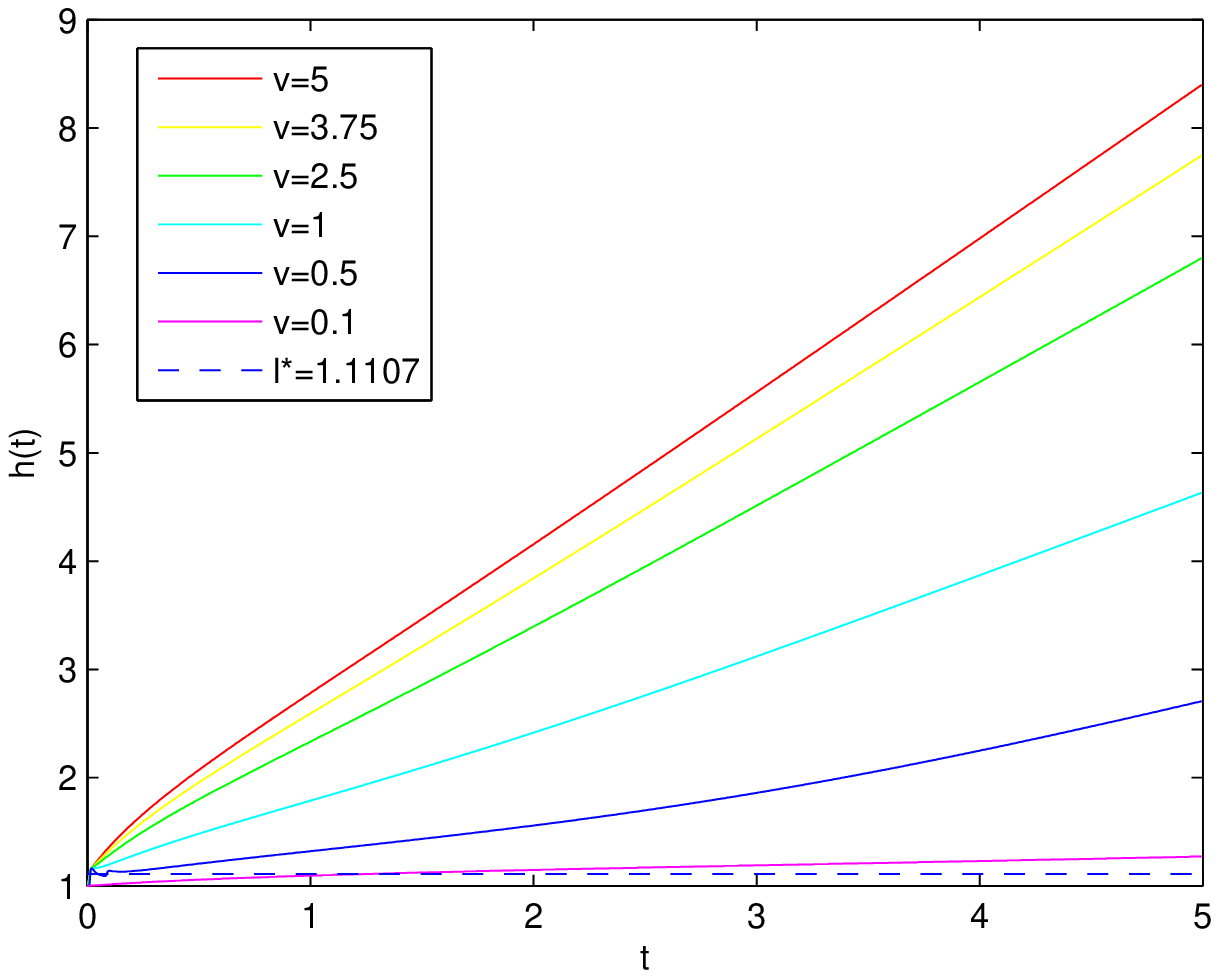}
     \caption{Evolution of the habitat length $h(t)$}\label{Fig:11}
   \end{minipage}\hfill
   \begin {minipage}{0.48\textwidth}
     \centering
     \includegraphics[width=.9\linewidth,height=4.0cm]{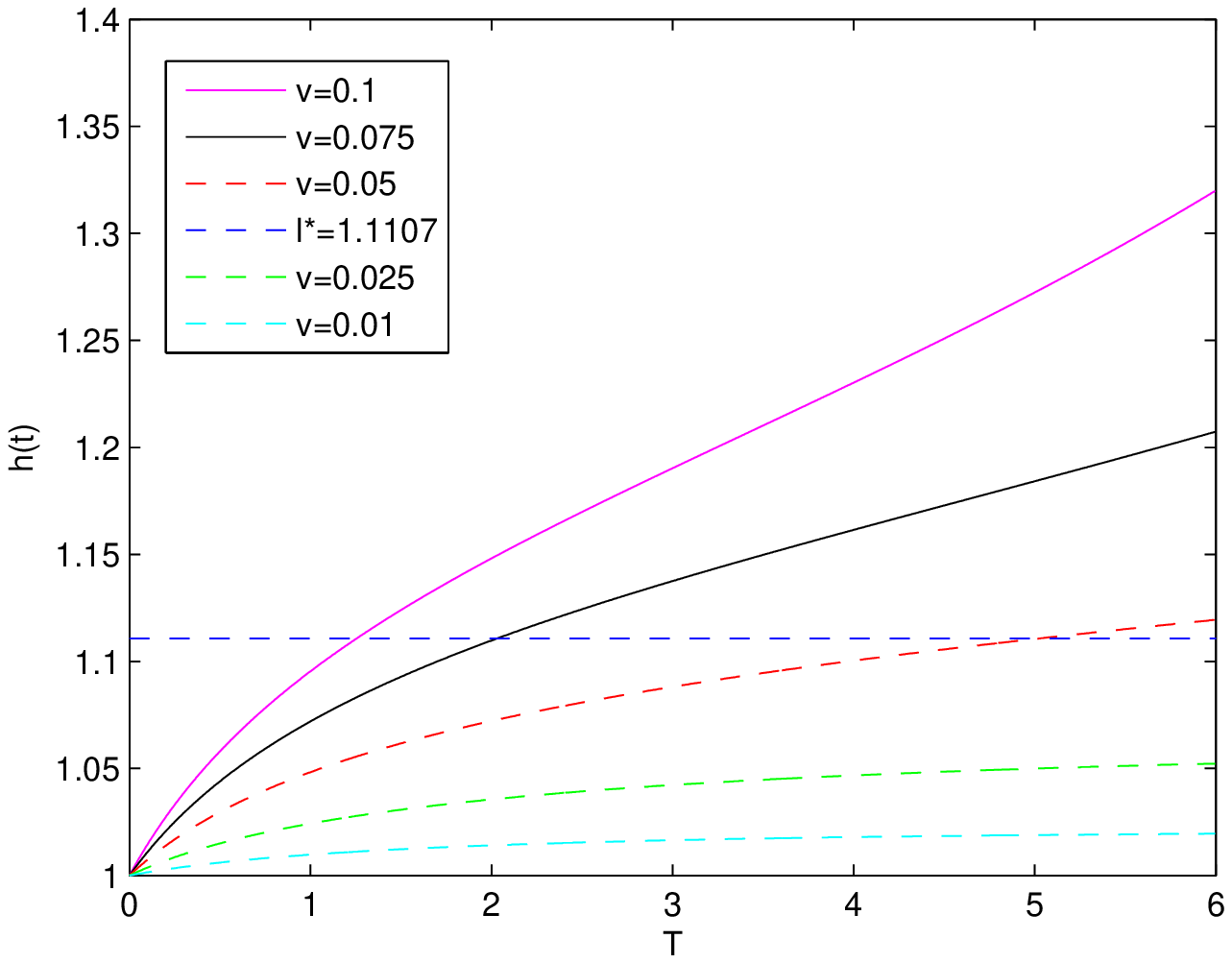}
     \caption{Evolution of the habitat length $h(t)$}\label{Fig:12}
   \end{minipage}
\end{figure}
 \end{Ex}
%\item [(3)]
\subsection{Vanishing-spreading dichotomy dependence on $u_0$}
In the following, we study the vanishing-spreading dependence on initial solution $u_0(x)$.
\begin{Ex}\label{Exm-7}
We first investigate the system \eqref{one-free-boundary-eq} with large initial solution $u_0(x)=4\cos(\pi x/2h_0)$ with the following parameters $(h_0, \chi_1, \chi_2, \nu, \lambda_1,\lambda_2,\mu_1, \mu_2) = (1, 0.2, 0.1, 0.8, 1, 2, 1, 2)$.
The system has a tendency of spreading and converges to the constant $\frac{a}{b} =2$ (see Figure \ref{Fig:13},\ref{Fig:14}).
\begin{figure}[!htb]
   \begin{minipage}{0.48\textwidth}
     \centering
     \includegraphics[width=.9\linewidth,height=4.0cm]{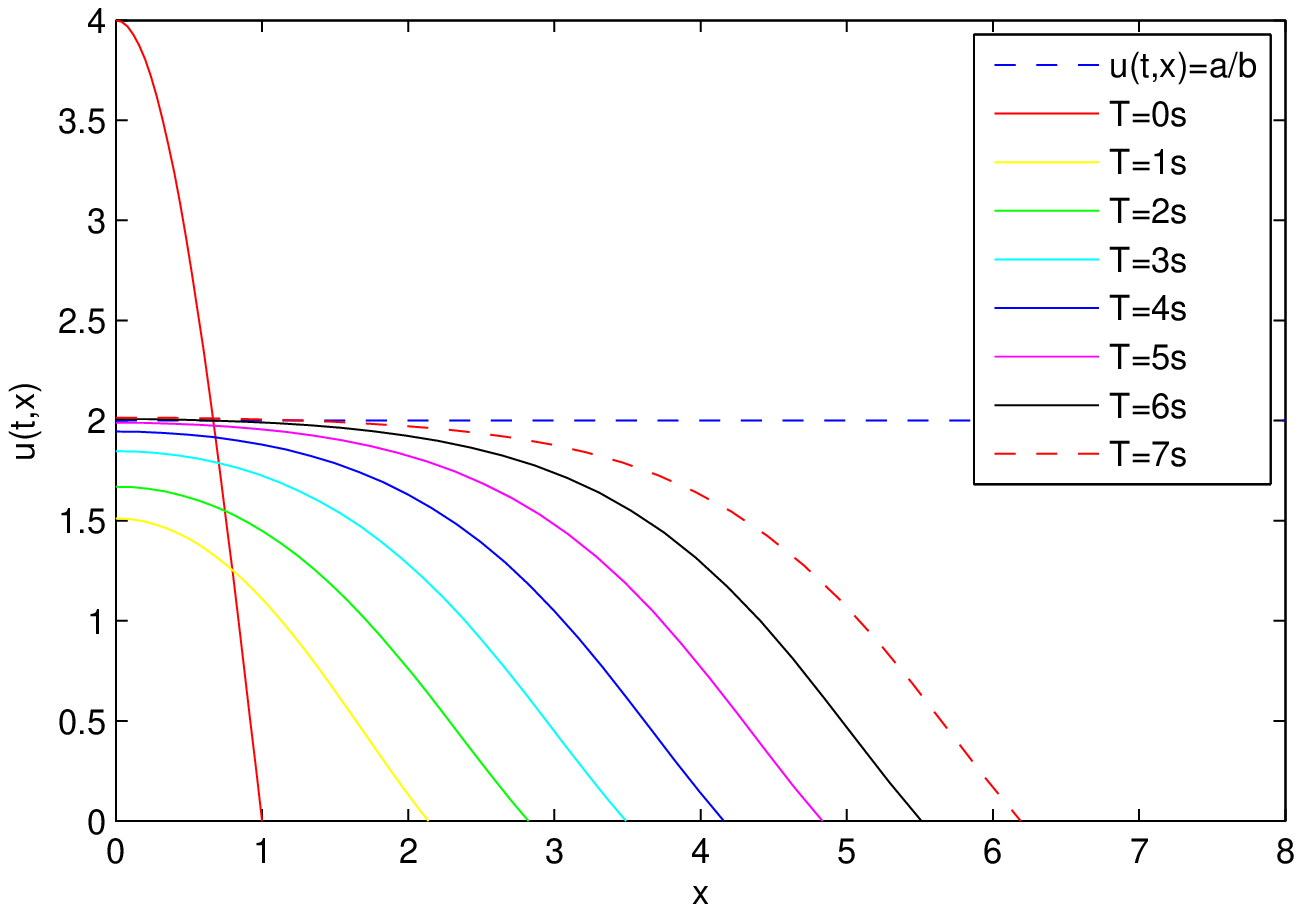}
     \caption{Evolution of the density $u(t,x)$}\label{Fig:13}
   \end{minipage}\hfill
   \begin {minipage}{0.48\textwidth}
     \centering
     \includegraphics[width=.9\linewidth,height=4.0cm]{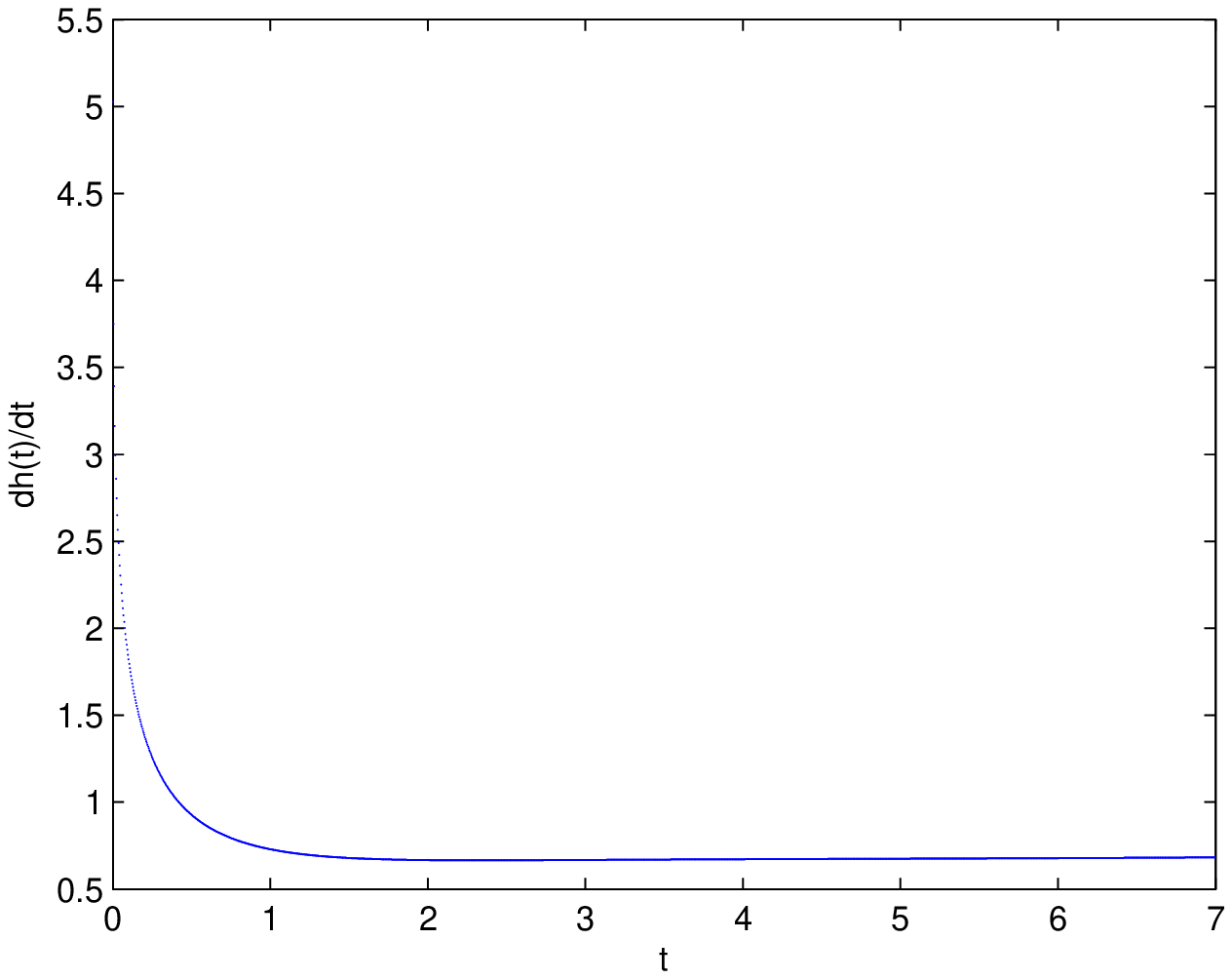}
     \caption{Evolution of the speed $\frac{h(t)}{t}$}\label{Fig:14}
   \end{minipage}
\end{figure}
\end{Ex}
\begin{Ex}\label{Exm-8}
In the case of small initial solution $u_0=0.01\cos(\pi x/2h_0)$ and with fixed other parameters as in Exmple \ref{Exm-7}, the system has a tendency of vanishing (see Figure \ref{Fig:15}, \ref{Fig:16}).
\begin{figure}[!htb]
   \begin{minipage}{0.48\textwidth}
     \centering
     \includegraphics[width=.9\linewidth,height=4.0cm]{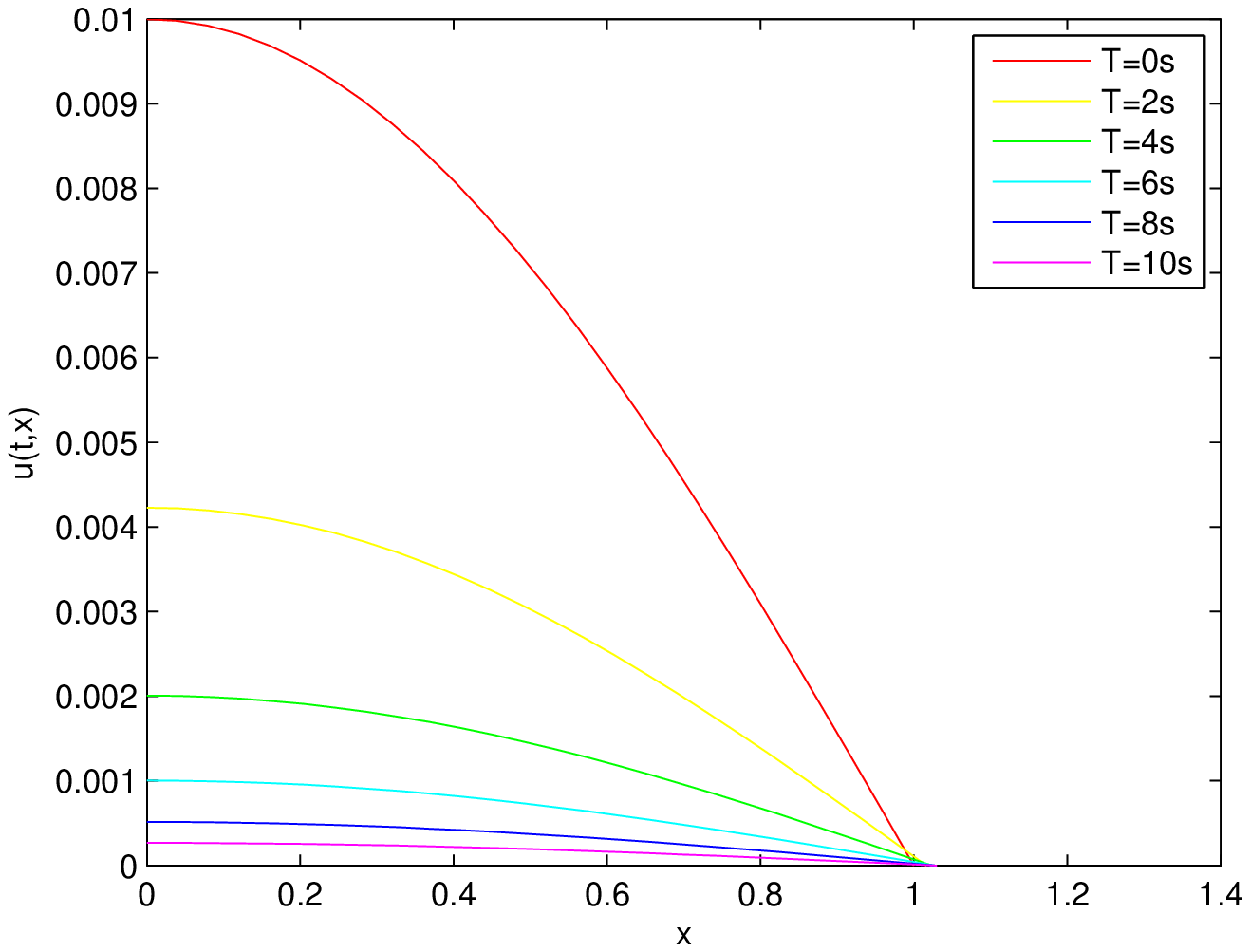}
     \caption{Evolution of the density $u(t,x)$}\label{Fig:15}
   \end{minipage}\hfill
  \begin {minipage}{0.48\textwidth}
    \centering
    \includegraphics[width=.9\linewidth,height=4.0cm]{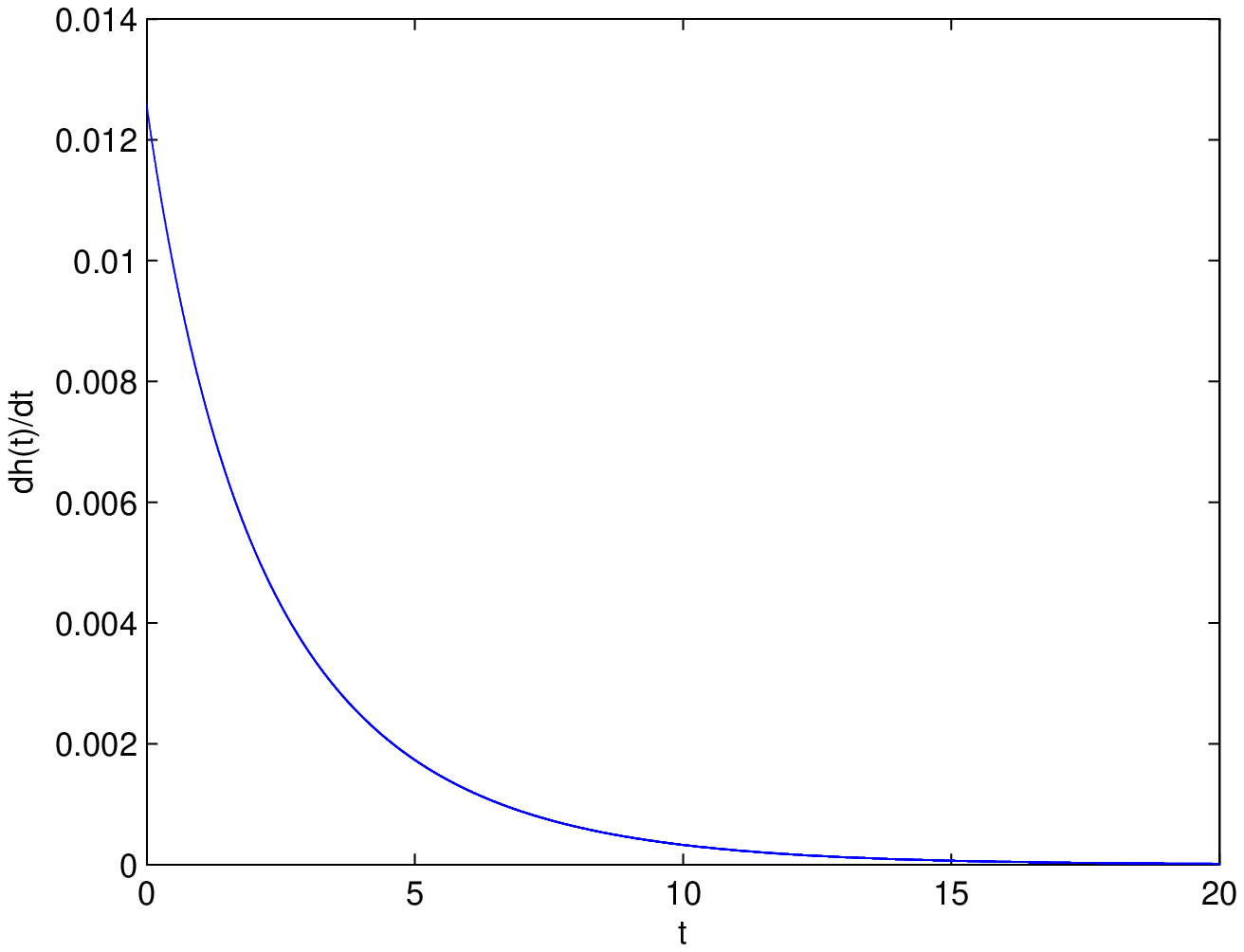}
    \caption{Evolution of the speed $\frac{h(t)}{t}$}\label{Fig:16}
   \end{minipage}
\end{figure}
\end{Ex}
In summary, the system \eqref{one-free-boundary-eq} has tendency of vanishing when the initial solution $u_0(x)$ is small and the moving boundary converges to a constant less than $l^*$. In the case of large initial solution, the system has tendency of spreading whose spreading speed also converges to a constant which is similar to the \emph{Fisher-KPP} free boundary problems.
%\end{Ex}
\subsection{The dependence of asymptotic spreading speed $\frac{h(t)}{t}$ on parameters}
In the following, we mainly focus on the spreading speed's dependence on parameters $u_0, \nu$, and the chemotactic sensitivity coefficients $\chi_1, \chi_2$ when spreading happens.

%\item [(4)]
We first consider the dependence of the spreading speed on the moving speed $\nu$ with small $u_0$ and $h_0>l^*$.

\begin{Ex}\label{Exm-9}
With a large $\nu$, let the parameters in the system \eqref{one-free-boundary-eq} be the following
\newline
$(h_0, u_0, \chi_1, \chi_2, \nu, \lambda_1,\lambda_2,\mu_1.\mu_2) = (2, \cos(\pi x/2h_0), 0.2, 0.1, 2, 1, 2, 1, 2)$, the system has tendency of spreading to the half line $\mathbb{R}^+$ (see Figure \ref{Fig:17}).
\begin{figure}[!htb]
   \begin{minipage}{0.48\textwidth}
     \centering
     \includegraphics[width=.9\linewidth,height=4.0cm]{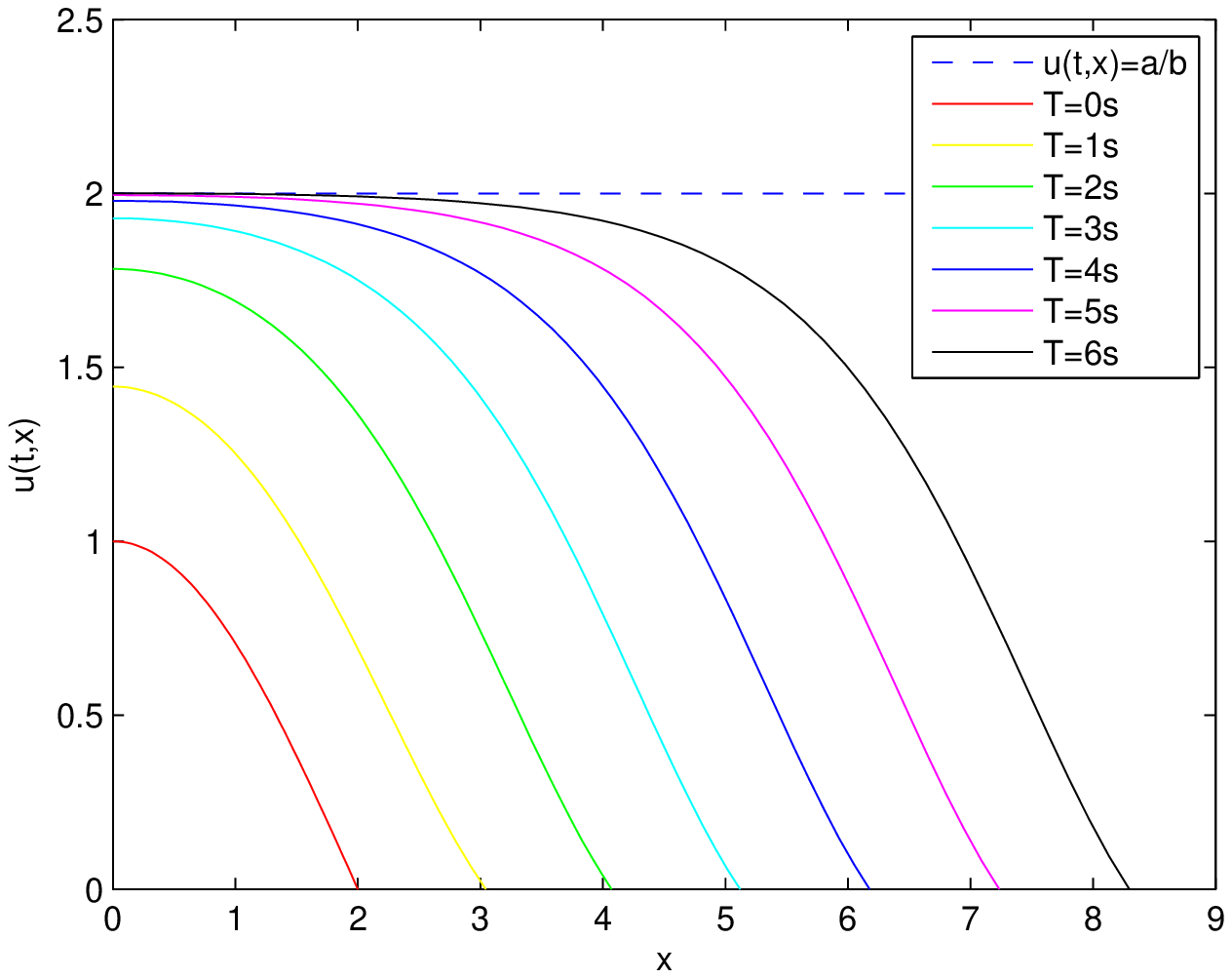}
     \caption{Evolution of the density $u(t,x)$}\label{Fig:17}
   \end{minipage}\hfill
   \begin {minipage}{0.48\textwidth}
     \centering
     \includegraphics[width=.9\linewidth,height=4.0cm]{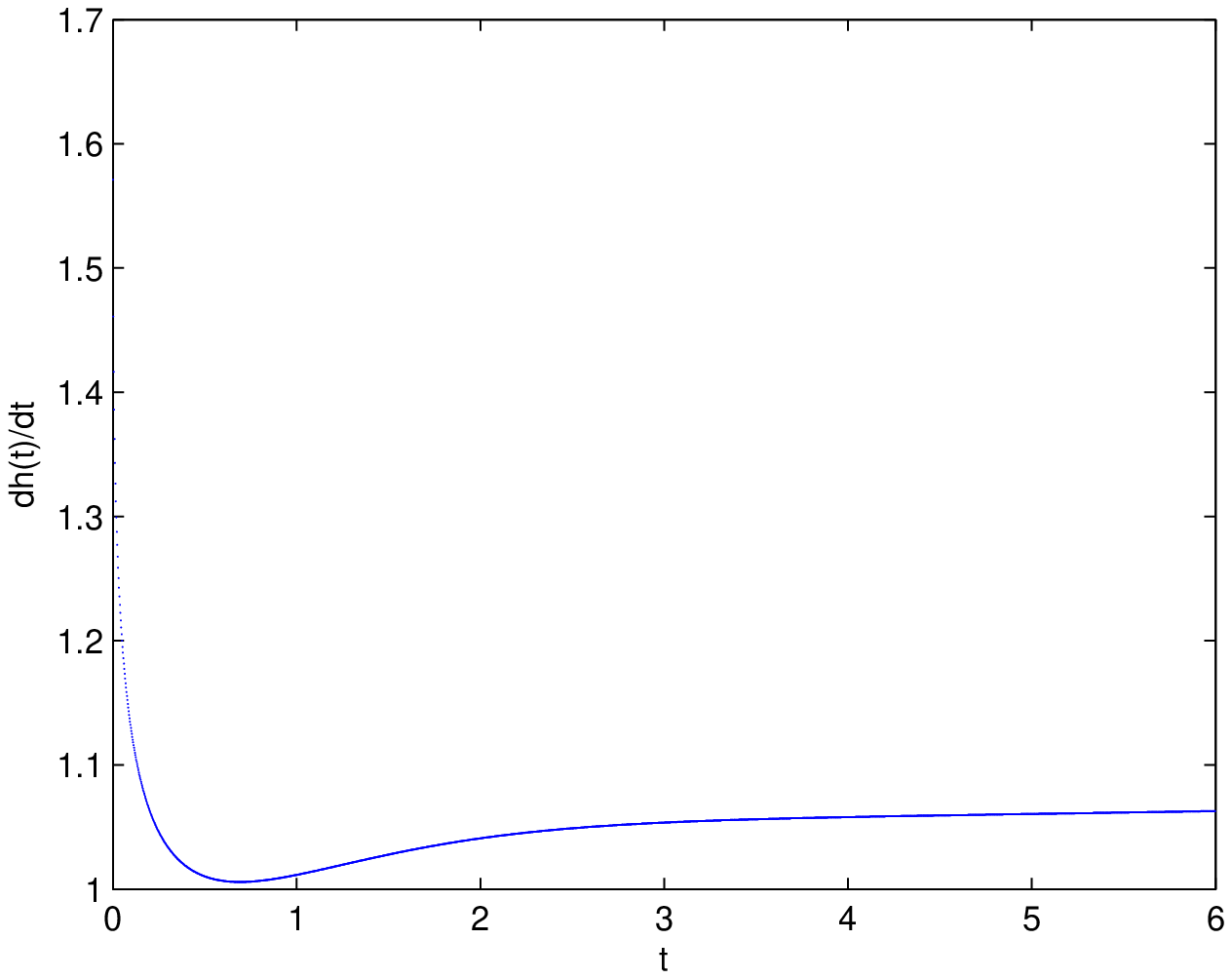}
     \caption{Evolution of the speed $\frac{h(t)}{t}$}\label{Fig:18}
   \end{minipage}
\end{figure}
\end{Ex}

\begin{Ex}\label{Exm-10}
With a even smaller moving speed $\nu=0.01$, let other parameters are the same as in Exmple \ref{Exm-9}. The system \eqref{one-free-boundary-eq} has tendency of spreading and the spreading speed is smaller compared to the system with larger $\nu$ (see Figure \ref{Fig:19}, \ref{Fig:20}).
\begin{figure}[!htb]
   \begin{minipage}{0.48\textwidth}
     \centering
     \includegraphics[width=.9\linewidth,height=4.0cm]{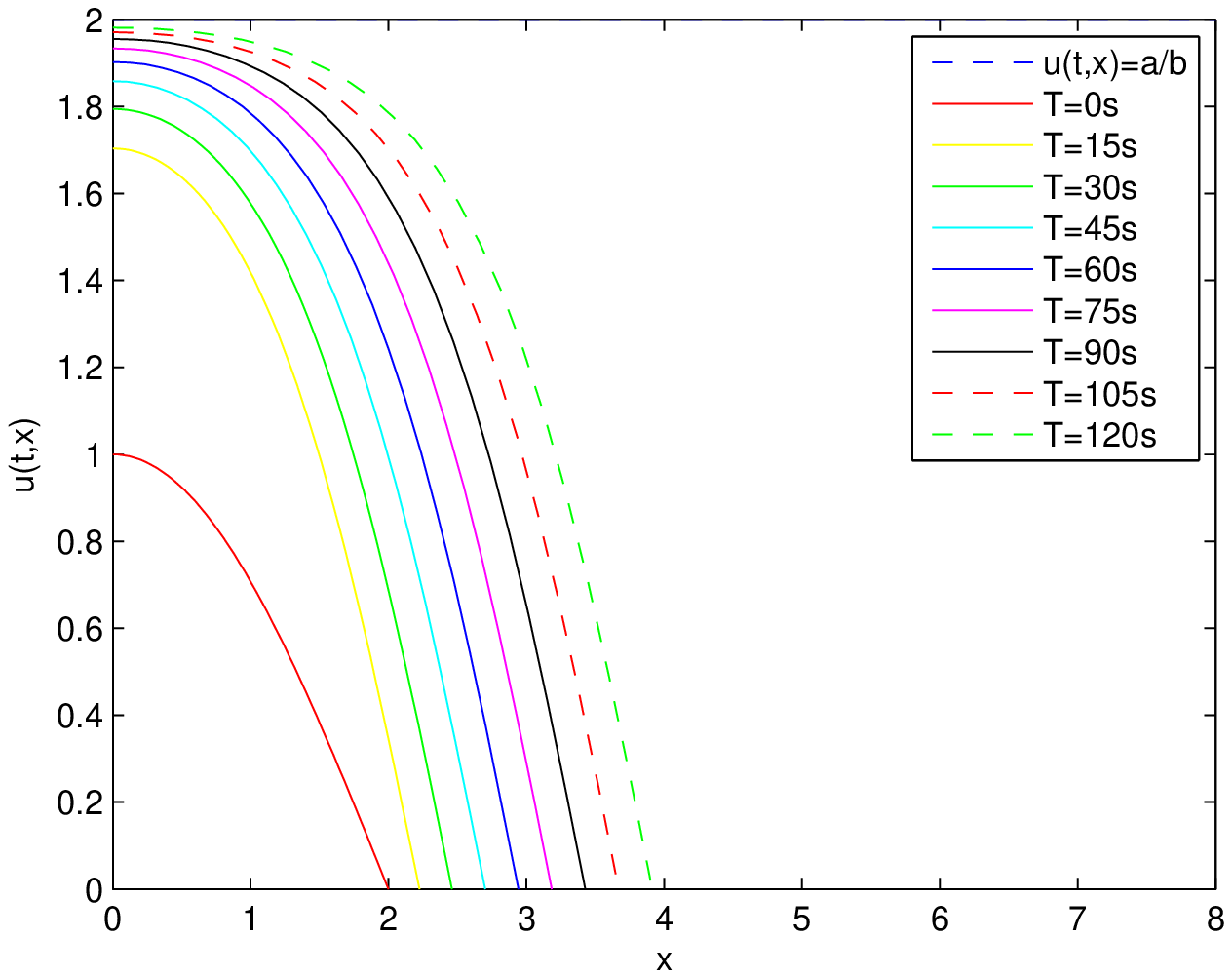}
     \caption{Evolution of the density $u(t,x)$}\label{Fig:19}
   \end{minipage}\hfill
   \begin {minipage}{0.48\textwidth}
     \centering
     \includegraphics[width=.9\linewidth,height=4.0cm]{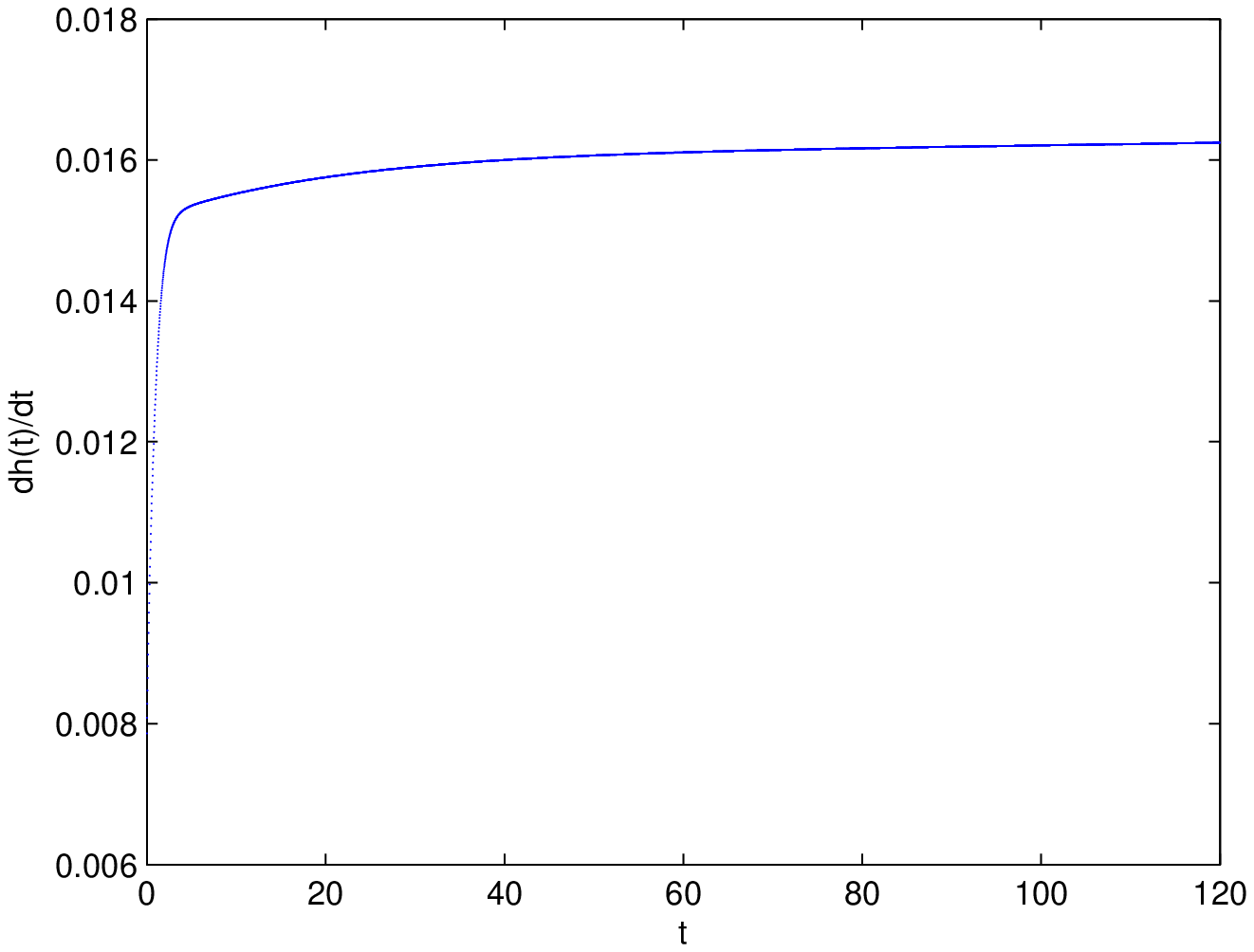}
     \caption{Evolution of the speed $\frac{h(t)}{t}$}\label{Fig:20}
  \end{minipage}
\end{figure}
\end{Ex}
The above simulations indicate that the spreading happens when the moving speed $\nu$ is small and we also can conclude that the asymptotic spreading speed depends on the moving speed $\nu$. Figure \ref{Fig:11} and \ref{Fig:12} also indicate that when the spreading happens, the spreading speed is a increasing function of moving speed $\nu$.

In the following, we use a table to compare the spreading speeds with different initial solution $u_0$, initial habitat $h_0$, and the effects of the chemotactic sensitivity $\chi_1, \chi_2$. These simulations indicate the asymptotic spreading speed is independent of the parameters $u_0, h_0, \chi_1, \chi_2$.
\begin{Ex}\label{Exm-11}
Let $(h_0, \chi_1, \chi_2, \nu, \lambda_1,\lambda_2,\mu_1.\mu_2) = (2, 0.2, 0.1, 0.8, 1, 2, 1, 2)$
and $u_0=\sigma\cos(\pi x/2h_0).$ With different $\sigma$ choices, we have the following spreading speed data.
\begin{center}
\begin{tabular}{|c|c|c|c|c|c|}
  \hline
  dh(t)/dt&$\sigma=0.01$&$\sigma=0.1$&$\sigma=1$&$\sigma=2$&$\sigma=4$\\
  \hline
  T=1s&     0.024    &  0.192  & 0.607 & 0.703 & 0.750\\
  \hline
  T=2s&     0.087    &  0.417  & 0.659 & 0.678 & 0.678\\
  \hline
  T=3s&    0.255    &  0.595  & 0.676 & 0.675 & 0.667\\
  \hline
  T=4s&     0.483    &  0.664  & 0.680 & 0.676 & 0.666\\
  \hline
  T=5s&     0.626    &  0.682  & 0.681 & 0.677 & 0.666\\
  \hline
  T=6s&     0.673    &  0.687  & 0.682 & 0.677 & 0.668\\
  \hline
  T=7s&     0.685    &  0.689  & 0.683 & 0.678 & 0.668\\
  \hline
  T=8s&     0.689    &  0.690  & 0.683 & 0.679 & 0.670\\
  \hline
  T=9s&     0.690    &  0.691  & 0.684 & 0.680 & 0.672\\
  \hline
  T=10s&    0.691    &  0.692  & 0.684 & 0.681 & 0.673\\
  \hline
  %T=11s&    0.692    &  0.693  & 0.685 & 0.682 & 0.675\\
%  \hline
%  T=12s&    0.693    &  0.694  & 0.685 & 0.683 & 0.676\\
  \hline
\end{tabular}
\end{center}
The table indicate the spreading speed converges to a constant near $0.7$ with different choices of $\sigma$. The smaller $\sigma$ has a slower spreading speed at the beginning, but converges to a similar constant to other larger choice of $\sigma$ as time increase. For the larger $\sigma$, the initial spreading speed decrease first and then converges to a constant which is independent of the initial $u_0$.
\end{Ex}

In \emph{Fisher-KPP} free boundary problem, the spreading speed is also independent of the initial habitat when spreading happens. Because of the lack of comparison principle in chemotaxis system, this result is still open. Our simulation indicate this results should be also exist in chemotaxis system.
\begin{Ex}\label{Exm-12}
Let the parameters in the system \eqref{one-free-boundary-eq} as $(h_0, \chi_1, \chi_2, \nu, \lambda_1,\lambda_2,\mu_1.\mu_2) = (2, 0, 0,\\ 0.8, 1, 2, 1, 2)$ and $u_0=\sigma\cos(\pi x/2h_0)$. we have the following spreading speed data.
\begin{center}
\begin{tabular}{|c|c|c|c|c|c|}
  \hline
  dh(t)/dt&$\sigma=0.01$&$\sigma=0.1$&$\sigma=1$&$\sigma=2$&$\sigma=4$\\
  \hline
  T=1s&     0.024    &  0.191  & 0.606 & 0.710 & 0.766\\
  \hline
  T=2s&     0.087    &  0.416  & 0.662 & 0.687 & 0.698\\
  \hline
  T=3s&    0.254    &  0.594  & 0.680 & 0.686 & 0.688\\
  \hline
  T=4s&     0.486    &  0.664  & 0.685 & 0.687 & 0.687\\
  \hline
  T=5s&     0.627    &  0.682  & 0.687 & 0.687 & 0.6875\\
  \hline
  T=6s&     0.673    &  0.6865  & 0.688 & 0.688 & 0.688\\
  \hline
  T=7s&     0.684    &  0.688  & 0.688 & 0.688 & 0.688\\
  \hline
  T=8s&     0.687    &  0.6885  & 0.689 & 0.688 & 0.6885\\
  \hline
  T=9s&     0.688    &  0.689  & 0.689 & 0.689 & 0.689\\
  \hline
  T=10s&    0.689    &  0.690  & 0.690 & 0.689 & 0.689\\
  \hline
  %T=11s&    0.689    &  0.690  & 0.690 & 0.6895 & 0.690\\
%  \hline
%  T=12s&    0.690    &  0.691  & 0.691 & 0.690 & 0.690\\
  \hline
\end{tabular}
\end{center}
\end{Ex}

\begin{Ex}\label{Exm-13}
Let the parameters in the system \eqref{one-free-boundary-eq} as $u_0 = \cos(\pi x/2h_0)$ and
\newline
 $(\chi_1, \chi_2, \nu, \lambda_1,\lambda_2,\mu_1.\mu_2)$ = $(0.2, 0.1, 0.8, 1, 2, 1, 2)$.
With different $h_0$ choices, we have the following spreading speeds table:
\begin{center}
\begin{tabular}{|c|c|c|c|c|c|c|}
  \hline
  dh(t)/dt &$h_0=1$  & $h_0=1.2$   & $h_0=1.5$   & $h_0=2$   & $h_0=3$ &$h_0=5$ \\
  \hline
  T=1s     &  0.416  & 0.559   & 0.602   & 0.618             & 0.697  &0.538 \\
  \hline
  T=2s     & 0.492     & 0.607  &0.642   &0.661              & 0.663  &0.652 \\
  \hline
  T=3s     & 0.596     & 0.654  &0.669   &0.676              & 0.678  &0.677 \\
  \hline
  T=4s     & 0.654     & 0.674  &0.679   &0.680              & 0.681  &0.682 \\
  \hline
  T=5s     & 0.674     & 0.680  &0.682   &0.681              & 0.682  &0.683 \\
  \hline
  T=6s     & 0.680   & 0.682  &0.683   &0.682               & 0.682  &0.683 \\
  \hline
  T=7s     & 0.682  &0.6835   &0.684   &0.682                & 0.683  &0.684 \\
  \hline
  T=8s     & 0.683  &0.6845   &0.685   &0.683                & 0.683  &0.684 \\
  \hline
  T=9s     & 0.6835 & 0.685  &0.686   &0.6835                  & 0.6835  &0.685\\
  \hline
  T=10s    & 0.684  & 0.686  &0.687   &0.684                 & 0.684  &0.685 \\
  \hline
  %T=11s    & 0.685  &0.687   &0.688   &0.684                 & 0.684  &0.686 \\
%  \hline
%  T=12s    & 0.586  &0.688   &0.689  &0.685                 & 0.685  &0.686 \\
%  \hline
\end{tabular}
\end{center}
\end{Ex}
\emph{Fisher-KPP} free boundary problem \cite{DuLi} is a special case of the chemotaxis free boundary problem with $\chi_1 = 0, \chi_2 = 0$, which are fully investigated and its asymptotic spreading speed $\frac{h(t)}{t}$ is only depending on $a(t,x)$. Our numerical simulations indicate the speed may be independent of the chemotactic sensitivity $\chi_1, \chi_2$.

\begin{Ex}\label{Exm-14}
Let the parameters in the system \eqref{one-free-boundary-eq} as $u_0 = \cos(\pi x/2h_0)$ and
\newline
 $(\chi_1, \chi_2, \nu, \lambda_1,\lambda_2,\mu_1.\mu_2) = (0, 0, 0.8, 1, 2, 1, 2)$. With different $h_0$ choices, we have the following spreading speeds table:
\begin{center}
\begin{tabular}{|c|c|c|c|c|c|c|}
  \hline
  dh(t)/dt &$h_0=1$  & $h_0=1.2$   & $h_0=1.5$   & $h_0=2$   & $h_0=3$ &$h_0=5$ \\
  \hline
  T=1s     &  0.416  & 0.560   & 0.572   & 0.620             & 0.601  &0.538 \\
  \hline
  T=2s     & 0.493     & 0.609  &0.632   &0.665              & 0.669  &0.658 \\
  \hline
  T=3s     & 0.599     & 0.658  &0.668   &0.680              & 0.683  &0.683 \\
  \hline
  T=4s     & 0.658     & 0.679  &0.682   &0.685              & 0.686  &0.687 \\
  \hline
  T=5s     & 0.679     & 0.685  &0.686   &0.687              & 0.687  &0.688 \\
  \hline
  T=6s     & 0.6855   & 0.687  &0.687   &0.687               & 0.6875  &0.688 \\
  \hline
  T=7s     & 0.687  &0.688   &0.687   &0.6875                & 0.688  &0.689 \\
  \hline
  T=8s     & 0.688  &0.689   &0.688   &0.688                & 0.688  &0.689 \\
  \hline
  T=9s     & 0.689 & 0.689  &0.688   &0.688                  & 0.6885  &0.690\\
  \hline
  T=10s    & 0.689  & 0.690  &0.688   &0.688                & 0.689  &0.690 \\
  \hline
  %T=11s    & 0.690  &0.6905   &0.6885   &0.689                 & 0.689  &0.6905 \\
%  \hline
%  T=12s    & 0.6905  &0.691   &0.689  &0.689                 & 0.690  &0.681 \\
%  \hline
\end{tabular}
\end{center}
\end{Ex}
\subsection{Asymptotic behaviors with large chemotactic sensitivities}
The long time behaviors of the system is depend on the choices of parameters \cite{Tello2007chemotaxis}. In order to guarantee the convergence of the system to the constant $a/b =2$, we need chemotactic sensitivity coefficient to be small enough, and if not, the system may converges to some other constants. The following simulations indicate such result.
\begin{Ex}\label{Exm-15}
Let $h_0 = 2.5 > l^* =1.11$, $u_0=\cos(\pi x/2h_0)$, and
$(\chi_1, \chi_2, \nu, \lambda_1,\lambda_2,\mu_1.\mu_2) = (0.2, 0.1, 0.8, 2, 1, 2, 1)$.
Simulation indicates the system has spreading tendency but does not converge to any constant (see Figure \ref{Fig:21}).
\begin{figure}[!htb]
  \centering
  % Requires \usepackage{graphicx}
  \includegraphics[height=5cm,width=10cm]{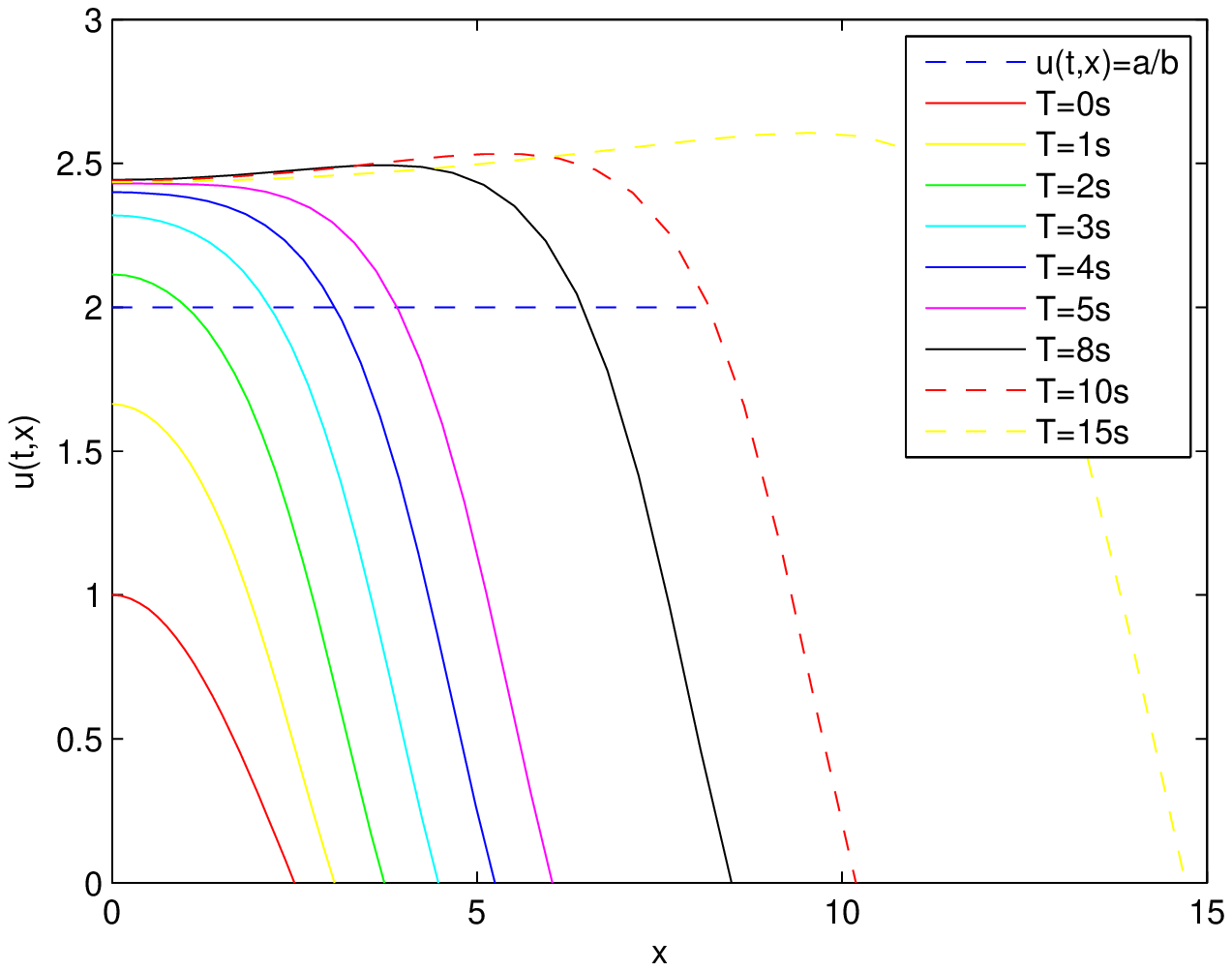}\\
 \caption{Evolution of the density $u(t,x)$}\label{Fig:21}
\end{figure}
\end{Ex}

\begin{Ex}\label{Exm-16}
Compared to Exmple \ref{Exm-15}, fix other parameters and let $(\lambda_1,\lambda_2,\mu_1,\mu_2)=(2,1,1,2)$, the simulation indicate the system converge to the constant $a/b = 2$ (see Figure \ref{Fig:22}).
\begin{figure}[!htb]
   \begin{minipage}{0.48\textwidth}
     \centering
     \includegraphics[width=.9\linewidth,height=4.0cm]{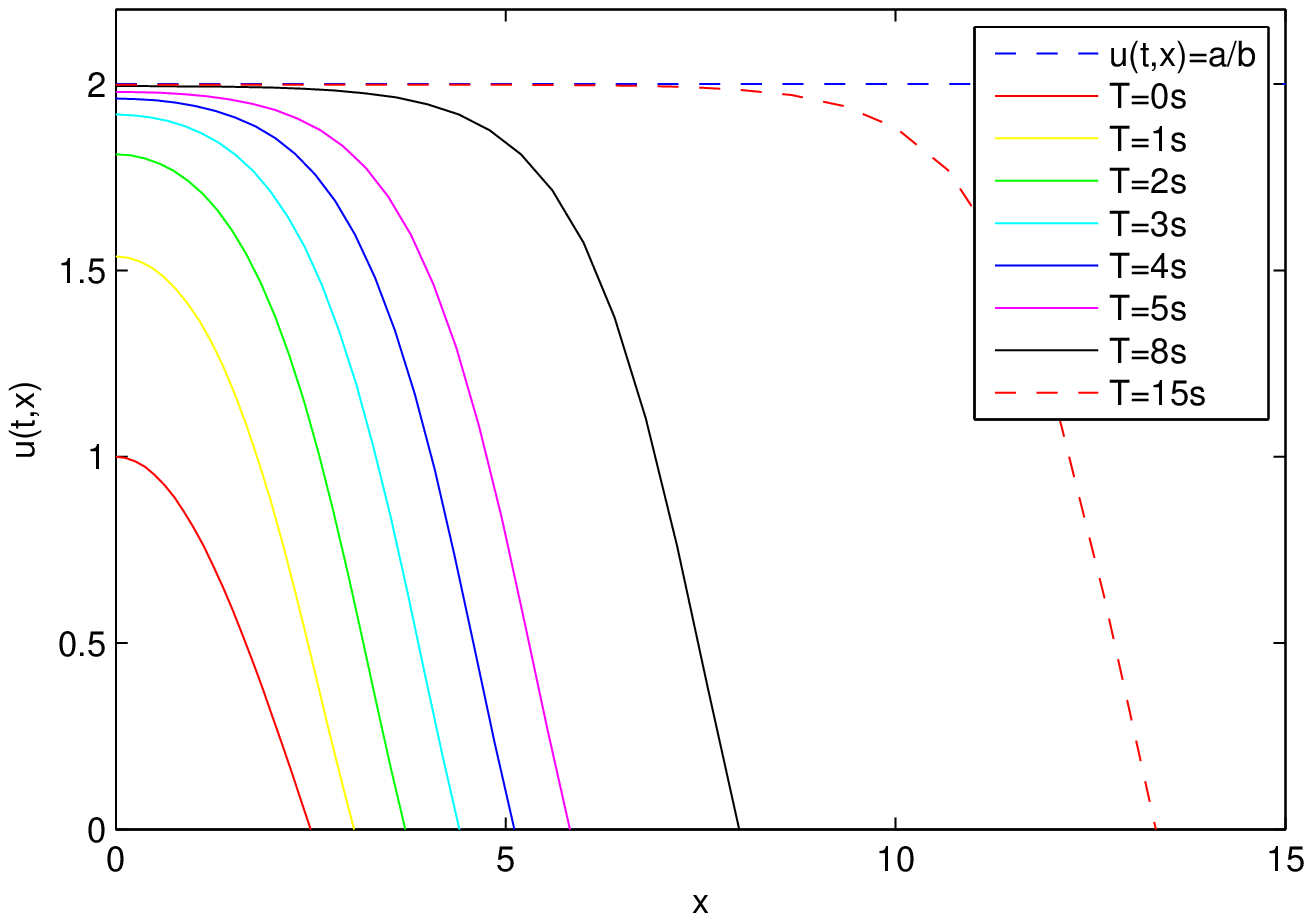}
     \caption{Evolution of the density $u(t,x)$}\label{Fig:22}
   \end{minipage}\hfill
   \begin {minipage}{0.48\textwidth}
     \centering
     \includegraphics[width=.9\linewidth,height=4.0cm]{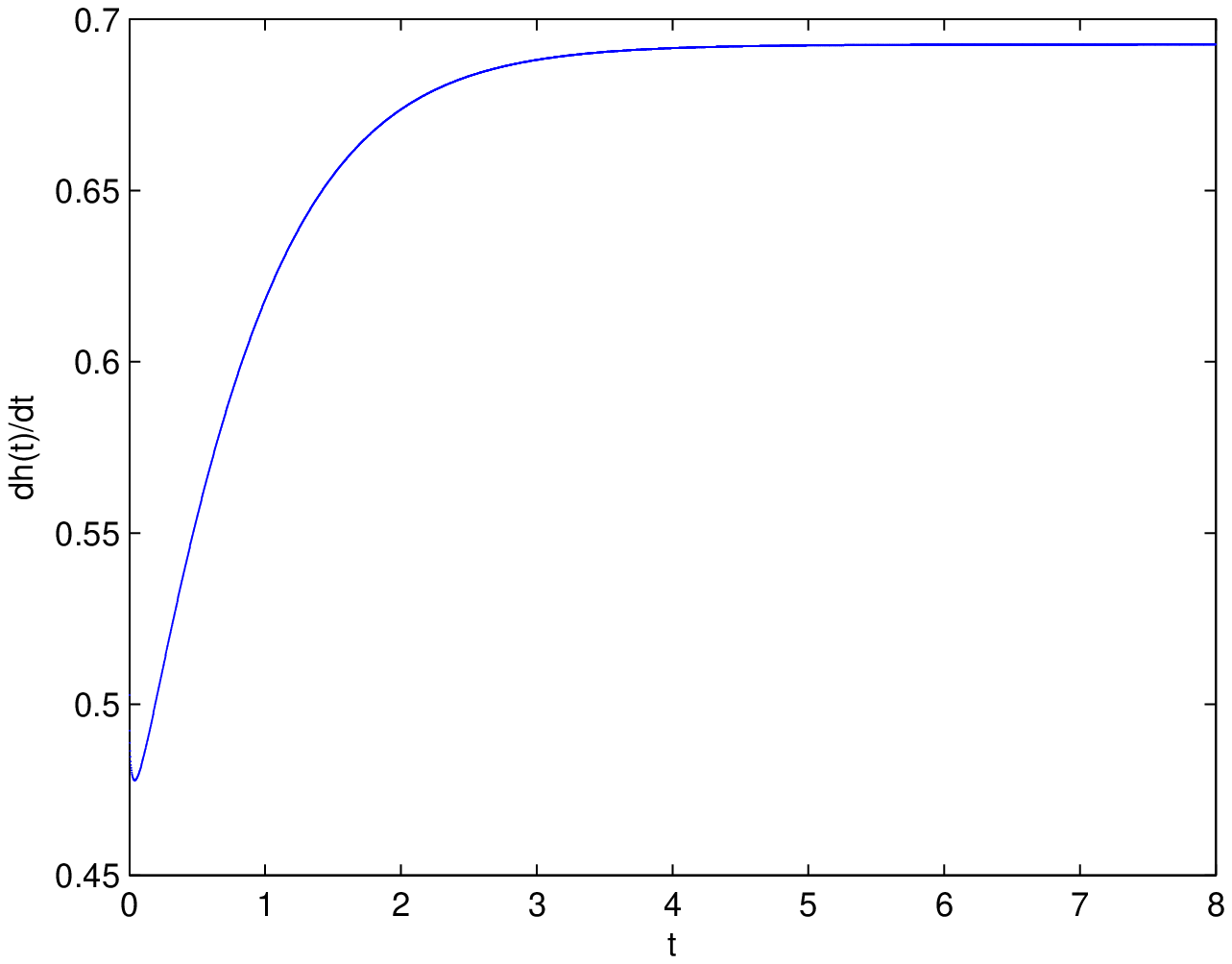}
     \caption{Evolution of the speed $\frac{h(t)}{t}$}\label{Fig:23}
   \end{minipage}
\end{figure}

The corresponding spreading speed $\frac{h(t)}{t}$ tends to a positive constant 0.69 (see Figure \ref{Fig:23}),which is similar to \emph{Fisher-KPP} free boundary problems. It is an evidence that the spreading speed is independent of the ratio of $a/\lambda_1$ \cite{SaShXu}.
\end{Ex}

\begin{Ex}\label{Exm-16}
Compared to Exmple \ref{Exm-15}, fix other parameters and let $\lambda_1<\lambda_2$ such that $(\lambda_1,\lambda_2,\mu_1,\mu_2)=(1,2,2,1)$, we have the following spreading result which does not converge to $a/b = 2$.
\begin{figure}[!htb]
  \centering
  \includegraphics[height=5cm,width=10cm]{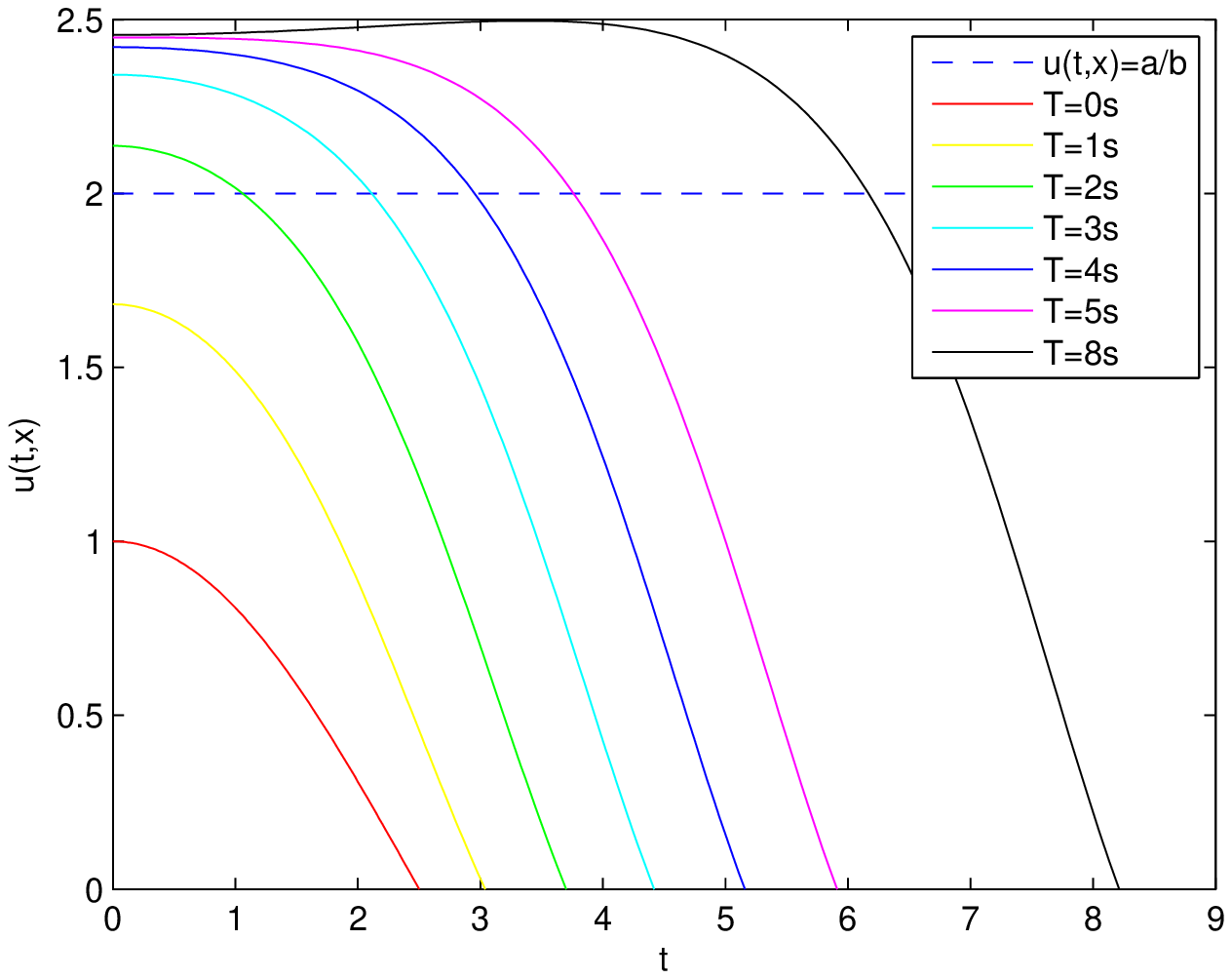}\\
 \caption{Evolution of the density $u(t,x)$}\label{4}
\end{figure}
\end{Ex}
\section{Conclusion}
In this paper we construct a numerical scheme to approximate the continuous logistic type chemotaxis system with a free boundary. The scheme's error estimate, positivity preserving, the monotonicity of the free boundary and stability are investigated. Numerical simulations validate some proved theoretical results such as vanishing-spreading dichotomy, persistency and stability. Compared to \emph{Fisher-KPP} free boundary problem, the dependence of the vanishing-spreading dichotomy on initial solution $u_0$ and initial habitat $h_0$ are still open problems both theoretically and numerically. Furthermore, the existence of the asymptotic spreading speed $\frac{h(t)}{t}$ and its dependence on the parameters are also open problems. All these continuous and discrete dynamical questions should be investigated in the future.

\section*{Acknowledgement}
The authors appreciate Prof. Wenxian Shen at Auburn University for introducing this interesting topic and many valuable and insightful discussions and help.

\end{document}